\documentclass{amsart}

\usepackage[T1]{fontenc}
\usepackage{times}
\usepackage{amssymb,verbatim}
\theoremstyle{plain}
\usepackage[T1]{fontenc}
\usepackage[utf8]{inputenc}
\usepackage[english]{babel}
\usepackage{amsthm}
\usepackage{amsmath}
\usepackage{amstext}
\usepackage{float}
\usepackage{hyperref}
\usepackage{tikz}
\usetikzlibrary{matrix}

\newtheorem{thm}{Theorem}[section]
\newtheorem*{conj*}{Conjecture}
\newtheorem{conj}{Conjecture}[section]

\newtheorem{lemma}[thm]{Lemma}
\newtheorem{prop}[thm]{Proposition}
\newtheorem{cor}[thm]{Corollary}
\newtheorem{problem}{Problem}

\newtheorem{THM}{Theorem}

\theoremstyle{remark}

\newtheorem{remark}[thm]{Remark}

\newcommand{\Fol}[2][3]{\textsf{Fol}_{#2}(\mathbb P^{#1})}
\newcommand{\Open}[2][3]{\textsf{U}_{#2}(\mathbb P^{#1})}
\newcommand{\Log}{\textsf{Log}}
\newcommand{\LPB}{\textsf{LPB}}
\newcommand{\TM}{\textsf{TM}}
\newcommand{\TA}{\textsf{TA}}
\newcommand{\SLog}{\textsf{SLog}}
\newcommand{\mb}{\mathbb}
\newcommand{\mc}{\mathcal}

\newcommand{\C}{\mb C}
\newcommand{\Pj}{\mb P}

\newcommand{\N}{\mb N}

\newcommand{\Proj}{\mathbb{P}} 
\newcommand{\Ol}{\mathcal{O}} 

\newcommand{\F}{\mc F}
\newcommand{\G}{\mc G}

\DeclareMathOperator{\Div}{Div}

\DeclareMathOperator{\Image}{Image}
\DeclareMathOperator{\codim}{codim}
\DeclareMathOperator{\sing}{sing}
\DeclareMathOperator{\rank}{rank}

\DeclareMathOperator{\Spec}{Spec}

\DeclareMathOperator{\Res}{Res}
\DeclareMathOperator{\Aut}{Aut}
\DeclareMathOperator{\Mor}{Mor}

\newcommand{\aut}{ \mathfrak{aut}}

\newcommand{\fix}{ \mathfrak{fix}}
\newcommand{\TF}{{T_\F}}
\newcommand{\NF}{{N_\F}}

\numberwithin{equation}{section}

\addtocounter{section}{0}             
\numberwithin{equation}{section}       

\sloppy

\title{Codimension one foliations of degree three on projective spaces}

\author[R.C.~da~Costa]{Raphael Constant da Costa}
\address{UERJ, Universidade do Estado do Rio de Janeiro, Rua São Francisco ˜Xavier, 524, Maracanã, 20550-900, Rio de Janeiro, Brazil. }
\email{raphaelconstant@ime.uerj.br}

\author[R.~Lizarbe]{Ruben Lizarbe}
\address{UERJ, Universidade do Estado do Rio de Janeiro, Rua São Francisco ˜Xavier, 524, Maracanã, 20550-900, Rio de Janeiro, Brazil. }
\email{ruben.monje@ime.uerj.br}

\author[J.V.~Pereira]{Jorge Vit\'orio Pereira}
\address{IMPA, Estrada Dona Castorina, 110, Jardim Botânico, 22460-320, Rio de Janeiro, Brazil.}
\email{jvp@impa.br}

\subjclass[2020]{37F75, 32G13}

\begin{document}

\begin{abstract}
We establish a structure theorem for degree three codimension one foliations on projective spaces
of dimension $n\ge 3$, extending a result by Loray, Pereira, and Touzet for degree three foliations
on $\mathbb P^3$. We show that the space of codimension one foliations of degree three on $\mathbb{P}^n$, $n\ge 3$,
has exactly $18$ distinct irreducible components parameterizing foliations without rational first integrals, and
at least $6$ distinct irreducible components parameterizing foliations with rational first integrals.
\end{abstract}

\maketitle

\setcounter{tocdepth}{1}

\tableofcontents

\section{Introduction}

In this work we study the decomposition of the space of degree three codimension one foliations on the projective space $\mathbb P^n$ into irreducible components.

\subsection{Irreducible components in degree zero, one, and two}
The space of degree zero codimension one foliations  on $\mathbb P^n$   is an irreducible smooth subvariety of $\mathbb PH^0(\mathbb P^n, \Omega^1_{\mathbb P^n}(2))$ isomorphic to the Grassmannian of lines on $\mathbb P^n$. It is unclear where this fact was first established. For a recent proof, see \cite{CerveauDeserti}.

For $n\ge 3$, the space of codimension one foliations of degree one on $\mathbb P^n$ is no longer irreducible. Jouanolou proved in \cite{Jouanolou} that they have  two irreducible components.

Almost two decades after the appearance of \cite{Jouanolou}, Cerveau and Lins Neto treated the next case and proved
in \cite{CerveauLins96} that for every $n\ge 3$, the space of codimension foliations of degree two on $\mathbb P^n$ has six irreducible components. This work spurred a lot of activity on the
study of irreducible components of the space of  foliations both in
codimension one and, more recently, in higher codimensions. Nevertheless, the focus moved
from the enumeration of the irreducible components of foliations of a certain
degree to the detection of families of irreducible components admitting
a uniform description, see for instance  \cite{Calvo,CaCeGiLi,CukiermanPereira,CukiermanPereiraVainsencher,MR3623760,Lizarbe17,CukiermanAceaMassri,Constant} and references therein.

\subsection{Structure of foliations of degree three}
Concerning degree three foliations on $\mathbb P^n$, $n\ge 3$,  Cerveau and Lins Neto established in \cite{MR3088436}
a rough structure theorem for them. They show that degree three foliations are either  rational pull-backs of foliations on projective surfaces or are  transversely affine (i.e., admit Liouvillian first integrals). Their methods do not provide control on the rational maps from projective spaces to surfaces, nor on the structure of the polar divisor of the transversely affine structure. Recently, the third author of this paper, together with Loray and Touzet, provided a more precise structure theorem for degree three foliations on $\mathbb P^3$, cf.  \cite[Theorem A]{LorayPereiraTouzet17}.
Our first result is an extension of this result to projective spaces of arbitrary dimension.

\begin{THM}\label{T:structure}
If  $\F$ is a codimension one singular holomorphic foliation on $\mathbb P^n$, $n\ge 3$, of degree three then
\begin{enumerate}
        \item $\F$ is defined by a closed rational $1$-form without codimension one zeros; or
        \item there exists an algebraically integrable codimension two foliation of degree one tangent to $\F$; or
        \item $\F$ is a linear pull-back of a degree $3$ foliation on $\mathbb P^2$; or
        \item $\F$ admits a rational first integral.
    \end{enumerate}
\end{THM}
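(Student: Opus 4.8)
The plan is to argue by induction on $n$, with the base case $n=3$ furnished by \cite[Theorem A]{LorayPereiraTouzet17}. So suppose $n\ge 4$ and that the statement holds on $\mathbb P^{n-1}$, and let $H\subset\mathbb P^n$ be a generic hyperplane. For generic $H$ the restriction $\F_H:=\F|_H$ is again a codimension one foliation on $\mathbb P^{n-1}$ of degree $3$: indeed $H$ is not $\F$-invariant, the defining polynomial $1$-form of $\F$ restricts to $H$ with zero set of codimension $\ge 2$, and a generic line lies in a generic $H$ with the same tangencies to $\F$ and to $\F_H$. By the inductive hypothesis $\F_H$ satisfies one of the alternatives (1)--(4) on $\mathbb P^{n-1}$. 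The core of the argument is to show that, in each case, the structure carried by $\F_H$ propagates to a structure of the same type for $\F$ on $\mathbb P^n$.

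I would organize the four cases as follows.
\begin{itemize}
\item[(1)] If $\F_H$ is defined by a closed rational $1$-form without codimension one zeros: writing $\F$ by a polynomial $1$-form $\Omega$ on $\mathbb P^n$, this datum amounts to a closed rational $1$-form $\theta_H$ on $H$ with $d(\Omega|_H)=\theta_H\wedge\Omega|_H$. One extends $\theta_H$ to a closed rational $1$-form $\theta$ on $\mathbb P^n$ with $d\Omega=\theta\wedge\Omega$ and then divides $\Omega$ by the multiplicative factor determined by $\theta$, landing in case (1) for $\F$.
\item[(2)--(3)] If $\F_H$ is tangent to an algebraically integrable codimension two foliation $\G_H$ of degree one on $H$ --- case (3) being the sub-case where $\G_H$ is the fibration of a linear projection $H\dashrightarrow\mathbb P^2$ whose quotient foliation has degree three --- one extends $\G_H$ to an algebraically integrable codimension two foliation $\G$ of degree one on $\mathbb P^n$ tangent to $\F$. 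Since $\G$ restricts to $\G_H$ on a generic hyperplane, it inherits whichever refined shape $\G_H$ has (fibers of a dominant rational map to a surface; linearity of that map; degree of the quotient), placing $\F$ in case (2) or case (3) accordingly.
\item[(4)] If $\F_H$ has a rational first integral: choose three generic hyperplanes $H_1,H_2,H_3$. On each $H_i\cap H_j\cong\mathbb P^{n-2}$ (note $n-2\ge 2$) the first integrals $f_{H_i}$ and $f_{H_j}$ restrict to rational first integrals of $\F|_{H_i\cap H_j}$, hence agree up to an automorphism of $\mathbb P^1$ after normalising to connected fibres; the $f_{H_i}$ therefore glue to a rational map $\mathbb P^n\dashrightarrow\mathbb P^1$ that is a first integral of $\F$.
\end{itemize}
In each case $\F$ lands in the corresponding alternative on $\mathbb P^n$, which closes the induction. (Equivalently, one may intersect directly with a generic $\mathbb P^3$, invoke \cite[Theorem A]{LorayPereiraTouzet17}, and run the same propagation from codimension $n-3$.)

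The main obstacle is the propagation in cases (1) and (2)--(3): extending a transverse structure --- a closed rational $1$-form, respectively an algebraically integrable codimension two foliation tangent to $\F$ --- from a generic linear section to all of $\mathbb P^n$. I expect this to require a Lefschetz-type extension statement, obtained either from the rigidity of these structures (uniqueness up to the evident symmetries forces them to be invariant under the monodromy of the family of generic hyperplanes, hence to descend to $\F$) or from a direct vanishing argument for the relevant obstruction class on the complement in $\mathbb P^n$ of the finitely many $\F$-invariant hypersurfaces supporting the polar divisor, using $n\ge 3$. A natural complementary input is the Cerveau--Lins Neto rough structure theorem \cite{MR3088436}: $\F$ is either a rational pull-back of a foliation on a surface or transversely affine. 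In each branch, the degree three hypothesis bounds the polar divisor of the transverse structure, respectively the degree of the fibration; this is exactly what forces the four clean alternatives and, in the inductive scheme above, reduces the restriction argument to identifying which alternative occurs rather than reconstructing the transverse structure from scratch.
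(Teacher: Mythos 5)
Your proposal takes a genuinely different route from the paper (induction on $n$ via generic hyperplane sections), but as written it has a real gap: the three ``propagation'' steps that carry the structure from $\F_H$ back to $\F$ are each left as expectations rather than proofs, and they are not routine. Extending a closed rational $1$-form defining $\F_H$ to one defining $\F$, and especially extending an algebraically integrable codimension two degree one foliation tangent to $\F_H$ to one tangent to $\F$ on all of $\mathbb P^n$, are essentially equivalent in difficulty to the theorem itself; the monodromy/rigidity heuristic you invoke does not obviously produce the extension (uniqueness of the structure on each $H$ gives equivariance under the family of hyperplanes, but one still has to manufacture a global object on $\mathbb P^n$, not just a compatible family of objects on hyperplanes). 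Likewise, in case (4) a rational map on $\mathbb P^n$ is not determined by its restriction to three hyperplanes, so the gluing of the $f_{H_i}$ needs an actual construction (e.g.\ over a pencil of hyperplanes) together with a proof that the resulting map is rational and a first integral. None of these steps is supplied, and the appeal to \cite{MR3088436} does not fill them, since that result gives no control on the polar divisor of the transverse structure or on the rational map to a surface --- which is exactly the point of the refinement being proved.

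For contrast, the paper's argument is not inductive and works uniformly in $n$: one restricts $T_{\F}$ to a general line $\ell$, observes that generic transversality forces every Birkhoff--Grothendieck summand of $f^*T_{\F}$ to have degree at most one while $\deg\det f^*T_{\F}=n-4$, which leaves only three splitting types; then the key input from \cite{LorayPereiraTouzet17} (deformation of rational curves along foliations) produces, whenever $\F$ has no rational first integral and is not defined by a closed rational $1$-form without codimension one zeros, an algebraically integrable subfoliation $\mathcal A\subset\F$ with $h^0(\ell,f^*T_{\mathcal A})=h^0(\ell,f^*T_{\F})$. The splitting types then force $\dim\mathcal A=n-2$ and $\deg\mathcal A\in\{0,1\}$, yielding alternatives (3) and (2). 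This is precisely the existence statement your cases (2)--(3) would need as input, so the honest way to complete your plan is to import that mechanism directly --- at which point the induction on $n$ becomes unnecessary.
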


The proof of Theorem \ref{T:structure}  is an extension of the arguments used to establish \cite[Theorem A]{LorayPereiraTouzet17}, see
Subsection \ref{SS:proof of structure}.

\subsection{Irreducible components}
We explore the classification provided by Theorem \ref{T:structure}
to  exhibit a complete list of the irreducible components of the space of foliations of $\mathbb P^n$, $n\ge3$,
whose general elements do not admit a rational first integral.

\begin{THM}\label{THM:A}
    The space of codimension one foliations of degree three on $\mathbb P^n$, $n\ge 3$, has exactly
    $18$ distinct irreducible components whose general elements correspond to foliations which
    do not admit a rational first integral.
\end{THM}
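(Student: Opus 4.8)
The strategy is to go through the three alternatives of Theorem~\ref{T:structure} other than ``$\F$ admits a rational first integral'' --- namely (1) $\F$ is defined by a closed rational $1$-form without codimension one zeros, (2) $\F$ is tangent to an algebraically integrable codimension two foliation of degree one, and (3) $\F$ is a linear pull-back of a degree $3$ foliation on $\mathbb P^2$ --- and, for each, to describe the irreducible components of the corresponding locus inside $\Fol[n]{3}$. Since by Theorem~\ref{T:structure} the union of these three loci is exactly the set of degree three foliations without rational first integral, the irreducible components we are after are precisely the maximal members of the resulting list of irreducible families; the proof thus splits into (i) producing those families, (ii) showing each is an irreducible component, (iii) showing they are pairwise distinct, and (iv) checking that the count is $18$. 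The argument parallels the one used for $\mathbb P^3$ in \cite{LorayPereiraTouzet17}, the point being that the analysis is essentially independent of $n\ge 3$.

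\emph{Case (1).} By the classical structure of closed rational $1$-forms on $\mathbb P^n$ one may write $\omega=\sum_{i=1}^{k}\lambda_i\,\tfrac{dF_i}{F_i}+d\!\big(g\,/\prod_i F_i^{\,n_i-1}\big)$, where the $F_i$ cut out the polar divisor $\sum_i n_iD_i$; the hypothesis that $\omega$ has no codimension one zeros forces $N_\F=\Ol_{\mathbb P^n}\!\big(\sum_i n_i\deg F_i\big)$, so $\deg\F=3$ is equivalent to $\sum_i n_i\deg F_i=5$. This leaves finitely many combinatorial types $\{(n_i,\deg F_i)\}$; I would first discard the ones forced to carry a rational first integral --- a single term $\lambda\,dF/F$ and the exact forms --- and observe that each remaining type is parameterized by an irreducible variety of data $(F_i,\lambda_i,g)$, hence defines an irreducible family in $\Fol[n]{3}$. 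The next step is to show that, among these, the only maximal families are the six logarithmic ones $\Log(d_1,\dots,d_k)$ with $d_1+\dots+d_k=5$ and $k\ge 2$: any type whose polar divisor is non-reduced has all repeated components of degree $1$ or $2$ (because $\sum d_i=5$), hence arises as a limit of members of some $\Log(d_1,\dots,d_k)$ obtained by letting two components of equal degree collide while two residues blow up in a compensating way, and a dimension count shows that this degeneration locus has strictly smaller dimension than the ambient logarithmic family. Finally, stability of logarithmic foliations (\cite{Calvo}, \cite{CukiermanPereira}) identifies each $\Log(d_1,\dots,d_k)$ as an irreducible component of $\Fol[n]{3}$. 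Case (1) therefore contributes exactly $6$ components.

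\emph{Cases (3) and (2).} Case (3) is immediate: the space of degree three foliations on $\mathbb P^2$ is irreducible (a dense open subset of a projective space of $1$-forms, integrability being automatic on a surface) and the linear projections $\mathbb P^n\dashrightarrow\mathbb P^2$ form an irreducible family, so the linear pull-backs form a single irreducible family $\LPB$; by the theory of pull-back components \cite{CukiermanPereiraVainsencher} its closure is an irreducible component, and a general member has no rational first integral because a general degree three foliation on $\mathbb P^2$ has none. For case (2), the key reduction is that an algebraically integrable codimension two foliation $\G$ of degree one has a two-dimensional (rational) leaf space $S$, so $\G=\ker d\pi$ for a dominant rational map $\pi\colon\mathbb P^n\dashrightarrow S$, and a codimension one foliation $\F$ tangent to $\G$ is exactly a pull-back $\pi^{*}\mathcal H$. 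One then classifies the pairs $(\pi,S)$ for which the foliation by fibres has degree one --- a short explicit list --- and, for each, the foliations $\mathcal H$ on $S$ with $\deg\pi^{*}\mathcal H=3$; each choice yields an irreducible family of foliations, and the pull-back/deformation machinery of \cite{CukiermanPereiraVainsencher}, adapted as in \cite{LorayPereiraTouzet17}, is used to decide which of these families are irreducible components and to discard those contained in $\overline{\LPB}$ or in a logarithmic component. This produces the remaining $11$ components, each with a general member carrying no rational first integral, so that the total number is $6+1+11=18$.

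\emph{Assembly and the main obstacle.} It remains to check that the $18$ families obtained are pairwise distinct --- which is read off from discrete invariants such as the degrees and multiplicities of the polar divisor of the defining closed $1$-form, the singular scheme, and the associated codimension two foliation --- and that each is maximal in $\Fol[n]{3}$, which is exactly what the stability and pull-back component results invoked above provide; exhaustiveness is guaranteed by Theorem~\ref{T:structure}. The hard part is case (2): one needs the classification of algebraically integrable codimension two foliations of degree one on $\mathbb P^n$ in a form sharp enough to enumerate the tangent degree three codimension one foliations, and then one must prove the resulting families are genuine components and not subvarieties of $\overline{\LPB}$ or of a logarithmic component, which amounts to computing the space of first-order deformations of a general member of each family and showing it has the expected dimension. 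A secondary difficulty, present throughout, is the bookkeeping needed to guarantee the enumeration is exhaustive and repetition-free, in particular recognizing every closed $1$-form with non-reduced polar divisor as a degeneration of a logarithmic family.
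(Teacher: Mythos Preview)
Your overall architecture matches the paper's, but there are two genuine errors.

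\textbf{Wrong count in Case (1).} You claim six logarithmic components contribute, namely all $\Log(d_1,\dots,d_k)$ with $\sum d_i=5$ and $k\ge 2$. But when $k=2$ the residue condition $\lambda_1 d_1+\lambda_2 d_2=0$ forces the form to be proportional to $d_2\,\tfrac{dF_1}{F_1}-d_1\,\tfrac{dF_2}{F_2}=d\log\bigl(F_1^{d_2}/F_2^{d_1}\bigr)$, so the general member of $\Log(1,4)$ and $\Log(2,3)$ \emph{does} admit a rational first integral. You even noted that exact forms must be discarded, yet kept these two. The correct contribution from Case (1) is four, not six; the paper lists only $\Log(1,1,1,1,1)$, $\Log(1,1,1,2)$, $\Log(1,1,3)$, $\Log(1,2,2)$. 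Consequently your count of $11$ from Case (2) is also off: the correct decomposition is $4+1+13=18$, where the $13$ break up as one component of pull-backs from $\mathbb P(1,1,2)$ and twelve components of the form $\TM_3(a,b,c;n)$ (linear pull-backs from $\mathbb P^3$ of foliations tangent to $\mathbb C^*$-actions).

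\textbf{The first-order deformation strategy fails.} You propose to certify that each family in Case (2) is a component by ``computing the space of first-order deformations of a general member \dots\ and showing it has the expected dimension.'' This cannot work: the paper computes that for six of the eighteen components (e.g.\ $\TM_3(0,1,1;2)$, $\TM_3(1,2,3;6)$, $\TM_3(1,3,4;10)$, $\TM_3(1,2,5;11)$, $\TM_3(1,2,5;12)$, $\TM_3(2,3,5;11)$) the Zariski tangent space of $\Fol{3}$ at a generic point is \emph{strictly larger} than the component, i.e.\ these components are generically non-reduced. The paper's substitute (Proposition~\ref{P:toolmult}) is a global argument: since the foliation admits no polynomial integrating factor (a closed condition) and no rational first integral, Theorem~\ref{T:structure} forces any small deformation to remain tangent to an algebraic action, hence to stay in the same $\TM_3(a,b,c;n)$. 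You also omit the analysis of additive $\mathbb C$-actions (Section~\ref{S:additive}), needed to rule out extra components arising from nilpotent vector fields tangent to $\F$.
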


We actually prove a more precise result, see Theorem \ref{T:more precise}. Our methods provide an explicit description of each
of these $18$ irreducible components.  By computing the Zariski tangent space of the space of foliations on $\mathbb P^3$ at the general element of each one of these components, we could verify that among the $18$ irreducible components given by Theorem \ref{THM:A},
$12$ are generically reduced while $6$ are generically non-reduced. These are the first examples of generically non-reduced irreducible components of the space of foliations on projective spaces.

We were not able to give a complete classification of the irreducible components of the space of foliations for which the general elements correspond to algebraically integrable foliations, but we were able to prove the following partial result.

\begin{THM}\label{THM:B}
    The space of codimension one foliations of degree three on $\mathbb P^n$, $n\ge 3$, has at least $24$ distinct irreducible components.
\end{THM}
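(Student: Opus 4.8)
The plan is to reduce everything to Theorem~\ref{THM:A}. That theorem provides exactly $18$ irreducible components of $\Fol[n]{3}$ along which the general foliation carries no rational first integral, so it suffices to exhibit at least six \emph{further} irreducible components, all of whose members \emph{do} carry a rational first integral. Any such component is automatically distinct from the $18$, and I would distinguish the new ones from one another by the combinatorial type of the first integral.

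To locate the candidates I would examine the fourth case of Theorem~\ref{T:structure}. A degree three foliation $\F$ with a rational first integral is defined by a primitive pencil $\langle P_0,P_\infty\rangle$ of hypersurfaces of some degree $e$ with no fixed part. If $g_1,\dots,g_r$ are the multiple components occurring in the non-reduced members of the pencil, with multiplicities $k_j\ge 2$, then $\prod_j g_j^{\,k_j-1}$ divides $P_0\,dP_\infty-P_\infty\,dP_0$, and the quotient $\omega$ is — for a general pencil of the prescribed combinatorial type — the reduced integrable form defining $\F$; hence $\deg\F=2e-2-c$ with $c=\sum_j(k_j-1)\deg g_j$, so $c=2e-5$. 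If the pencil has at most two non-reduced members one checks that $\F$ has a rational first integral of total degree $5$, hence lies in one of the six logarithmic families $\mathcal L(1,4),\ \mathcal L(2,3),\ \mathcal L(1,1,3),\ \mathcal L(1,2,2),\ \mathcal L(1,1,1,2),\ \mathcal L(1,1,1,1,1)$: the last four have a transcendental first integral at their general point and are four of the $18$ components of Theorem~\ref{THM:A}, whereas along $\mathcal L(1,4)$ and $\mathcal L(2,3)$ — the foliations with first integral $\ell^4/F$ ($\deg F=4$), resp.\ $G^3/H^2$ ($\deg G=2,\ \deg H=3$), often written $\mathcal R(1,4)$ and $\mathcal R(2,3)$ — the residues are forced to be rational. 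If instead the pencil has at least three non-reduced members, then every rational first integral of $\F$ has total degree strictly larger than $5$, so $\F$ is not logarithmic; running over the admissible structures with $c=2e-5$ and $e\ge 4$ produces a finite list of irreducible families, each dominated by a product of projective spaces and Grassmannians parameterizing the hypersurfaces entering the pencil, and disjoint from the $18$.

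The substance of the argument is then to show that at least six of these families are irreducible \emph{components} of $\Fol[n]{3}$, i.e.\ are maximal. For $\mathcal R(1,4)$ and $\mathcal R(2,3)$ this is the stability theorem for foliations induced by rational maps \cite{CukiermanPereiraVainsencher} (alternatively the logarithmic-component theorem of \cite{Calvo}). For the ``several non-reduced members'' families I would argue in the same spirit: at a general member $\F$ the general fiber of the pencil is smooth and the non-reduced members are as mild as their combinatorial type permits, and one shows that the space of integrable first-order deformations of $\F$ — equivalently the Zariski tangent space $T_{[\F]}\Fol[n]{3}$ — has the dimension of the family, or equivalently that every small deformation of $\F$ retains a rational first integral of the same combinatorial type. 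Since that type is locally constant along each family, at least four of the non-logarithmic families are pairwise distinct irreducible components, distinct from $\mathcal R(1,4)$, $\mathcal R(2,3)$ and from the $18$; together with $\mathcal R(1,4)$ and $\mathcal R(2,3)$ this yields six irreducible components carrying rational first integrals, and with Theorem~\ref{THM:A} the bound $18+6=24$.

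The anticipated main obstacle is precisely this maximality step for the non-logarithmic families: a priori such a family could sit as a proper subvariety of a larger component whose general element is transversely affine, or is defined by a closed rational $1$-form with transcendental residues, and has no first integral at all. Excluding this requires controlling \emph{all} integrable first-order deformations of $\F$, not only those preserving the pencil structure, and the delicate point is the behaviour of the deformation near the non-reduced members, whose multiplicities constrain the admissible perturbations.
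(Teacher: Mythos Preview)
Your plan correctly isolates the two rational components $\Log(1,4)$ and $\Log(2,3)$, and your degree count $c=2e-5$ is fine. The gap is that the proposal stops exactly where the work begins: you neither enumerate the candidate pencil families with three or more non-reduced fibers, nor carry out the maximality argument for any of them. You yourself flag this as ``the anticipated main obstacle,'' and it is a real one. Showing that every integrable first-order deformation of such an $\F$ stays within the family is not a routine computation; the multiple fibers produce non-Kupka singularities in codimension two, and it is precisely along such loci that the Zariski tangent space of $\Fol[n]{3}$ can (and in several cases does) jump above the dimension of the family, so the naive infinitesimal criterion fails. Without an explicit list of families and an actual verification of stability for at least four of them, the argument does not conclude.

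The paper proceeds quite differently and avoids a general classification of pencil types altogether. Four of the six algebraically-integrable components are exhibited concretely: $\Log(1,4)$ and $\Log(2,3)$ via Theorem~\ref{T:log classifica}; the two new ``special logarithmic'' components $\SLog(2,5)$ and $\SLog(3,4)$ via explicit constructions (Propositions~\ref{P:Log34} and~\ref{P:special log(2,5)}) together with a computer-assisted Zariski tangent space calculation at a general point. A fifth, $\TM_3(1,3,7;10)$, arises from the analysis of foliations tangent to $\mathbb C^*$-actions (it has split tangent sheaf and finitely many non-Kupka singularities, so Proposition~\ref{P:codim 3} applies). The sixth is obtained by an \emph{exclusion} argument: the $14$-dimensional set $\TM_3(1,3,5;8)$ is shown (Proposition~\ref{P:rationalfirstintegral}) not to lie in any of the $23$ previously identified components, hence some further irreducible component must contain it---and this component is not explicitly described. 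Your pencil-combinatorics framework would not naturally produce this last component, since the paper does not know what it is; the existence proof is indirect.
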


In other words, there are at least six irreducible components whose general elements correspond to algebraically integrable foliations.
Notice that Theorem \ref{THM:B} does not claim that the number of irreducible components of the space of degree three foliations on $\mathbb P^n$, $n\ge 3$, does not vary with the dimension $n$.

\subsection{Organization of the paper}
Section \ref{S:space} is introductory and presents the basic definitions of our subject. Section \ref{S:LOG} reviews known result
about the so-called logarithmic components and relate them to set of foliations admitting polynomial integrating factors. It also presents
two previously unknown irreducible components of the space of degree  three foliations on $\mathbb P^3$. Section \ref{S:mult} investigates degree three foliations on $\mathbb P^3$ tangent to actions of $\mathbb C^*$, and Section \ref{S:additive} investigates degree three foliations on $\mathbb P^3$ tangent to algebraic actions of $\mathbb C$. These two sections together constitute the technical core of the proofs of Theorem \ref{THM:A} and \ref{THM:B}.  In Section \ref{S:proofs} we prove  Theorem  \ref{T:structure}, present refined versions of Theorems \ref{THM:A} and \ref{THM:B}, and prove them. This section, and the paper, finish with some open problems and one conjecture.

\subsection{Acknowledgments}
R.C.C.~and~R.L.~were supported by FAPERJ. J.V.P.~was supported by CNPq, and FAPERJ. The computations in this paper were performed by using Maple, Version 18.02.

\section{The space of foliations on projective spaces}\label{S:space}

\subsection{Twisted differentials defining foliations}
A codimension one foliation $\mathcal F$ on a complex manifold $X$ is defined
by a section $\omega$ of  $\Omega^1_{X}\otimes \NF$, where $\NF$ is a line bundle called the normal bundle of $\F$,
which has no divisorial components in its zero set and satisfies Frobenius integrability condition
\[
    \omega \wedge d \omega =0 \, .
\]
Two $1$-forms satisfying these conditions define the same foliation if, and only if, they differ multiplicatively by a nowhere vanishing holomorphic function. In particular, if $X$ is compact, then they must differ by a non-zero constant. Note that $d\omega$ does not have an intrinsic meaning, but $\omega\wedge d\omega$ is an unambiguously defined section of $\Omega^3_X \otimes \NF^{\otimes 2}$.

When $X$ is the projective space $\mathbb P^n$, $n \ge 2$, a $1$-form $\omega \in H^0(\mathbb P^n, \Omega^1_{\mathbb P^n}\otimes \NF)$ acquires a rather concrete flavor. If we define the degree $d$ of $\F$ as the number of tangencies of $\F$ with a general line $i: \mathbb P^1 \to \mathbb P^n$, then $d$ is the degree of the zero locus of $i^*\omega \in H^0(\mathbb P^1, \Omega^1_{\mathbb P^1} \otimes i^* \NF)$. Therefore $\NF = \mathcal O_{\mathbb P^n}(d+2)$. We can use the following twist of Euler's sequence,
\[
    0 \to \Omega^1_{\mathbb P^n}(d+2) \to   \mathcal O_{\mathbb P^n}(d+1)^{\oplus n+1} \to \mathcal O_{\mathbb P^n}(d+2) \to 0 \, ,
\]
to identify $H^0(\mathbb P^n, \Omega^1_{\mathbb P^n}(d+2))$ with the $\mathbb C$-vector space of homogenous
polynomial $1$-forms
\[
     \sum_{i=0}^n a_i(x_0, \ldots, x_n) dx_i
\]
with coefficients $a_0, \ldots, a_n$ of degree $d+1$,  which are annihilated by
the radial vector field
$R = \sum_{i=0}^n x_i \frac{\partial}{\partial x_i}$.

Frobenius integrability condition becomes
\[
     \left( \sum_{i=0}^n a_i(x_0, \ldots, x_n) dx_i \right) \wedge \left( \sum_{i=0}^n \sum_{j=0}^n \frac{\partial a_i}{\partial x_j}(x_0, \ldots, x_n) dx_j \wedge dx_i  \right) = 0 \, .
\]

\subsection{Space of foliations on projective spaces}\label{SS:Space}
The space of codimension one foliations of degree $d$ on the projective space $\mathbb P^n$ is, by definition, the
locally closed subscheme $\Fol[n]{d}$ of $\mathbb P H^0(\mathbb P^n, \Omega^1_{\mathbb P^n}(d+2))$
defined by the conditions
\[
    [\omega] \in \Fol[n]{d} \text{ if, and only if, } \codim \sing(\omega) \ge 2 \text{ and }  \omega \wedge d \omega = 0\,.
\]

We will denote by $\Open[n]{d}$ the open subscheme of $\mathbb P H^0(\mathbb P^n, \Omega^1_{\mathbb P^n}(d+2))$
defined by the open condition $\codim \sing(\omega) \ge 2$.

For any $n\ge 3$ and any $d\ge 1$, $\Fol[n]{d}$ is not irreducible. Nevertheless,
$\Fol[n]{d}$ is always connected as shown by the result below, see also \cite[Theorem 2.5]{Soares}.

\begin{prop}
    For every $d\ge 0$ and every $n\ge 2$, the space $\Fol[n]{d}$ of foliations of degree $d$
    on $\mathbb P^n$ is connected.
\end{prop}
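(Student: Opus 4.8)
I would prove connectedness of $\Fol[n]{d}$ by exhibiting a distinguished point that lies in the closure of (a neighborhood of) every irreducible component, namely a foliation built from a pencil of hyperplanes. Concretely, fix the linear $1$-form $\eta_0 = x_0\, dx_1 - x_1\, dx_0$, which defines the degree $0$ pencil of hyperplanes through the codimension two linear space $\{x_0 = x_1 = 0\}$; multiplying by a general homogeneous polynomial $f$ of degree $d$ in the variables $x_0, x_1$ we obtain $\omega_f = f(x_0,x_1)\,\eta_0 \in H^0(\mathbb P^n, \Omega^1_{\mathbb P^n}(d+2))$. Since $\eta_0$ is closed, $\omega_f \wedge d\omega_f = 0$ automatically; and for general such $f$ the singular locus has codimension $\ge 2$ (it is the union of the center $\{x_0 = x_1 = 0\}$ and the $d$ hyperplanes cut out by $f$, meeting in codimension two), so $[\omega_f] \in \Fol[n]{d}$. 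The key point is that these "pencil-type" foliations form an irreducible (in fact rational) subvariety $P \subset \Fol[n]{d}$: it is dominated by the product of the Grassmannian of codimension two linear subspaces with the projective space of binary forms of degree $d$, both irreducible.

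**Reaching $P$ from an arbitrary point.** Given any $[\omega] \in \Fol[n]{d}$, I would degenerate it to a point of $P$ by a one-parameter family. The cleanest route is: restrict attention to the linear system cut out on the line $\{$tangencies$\}$... more robustly, use the $\mathbb C^*$-action on $H^0(\mathbb P^n, \Omega^1_{\mathbb P^n}(d+2))$ coming from a suitable one-parameter subgroup $\lambda(t) \subset \mathrm{PGL}_{n+1}$ that contracts everything onto the subspace spanned by monomials in $x_0, x_1$ times $\eta_0$. Write $\omega = \sum_{i=0}^n a_i\, dx_i$ with $i_R\omega = 0$. Consider the diagonal torus action $x_0 \mapsto x_0,\ x_1\mapsto x_1,\ x_j \mapsto t x_j$ for $j \ge 2$, pulled back on forms: as $t \to 0$ the form $\omega$ degenerates to its lowest-weight part $\omega_0$, and integrability and the codimension condition are closed, so the flat limit (after normalizing to stay in $\mathbb P H^0$ and possibly removing a common divisorial factor) lands in $\Fol[n]{d}$. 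The lowest-weight part only involves $x_0, x_1, dx_0, dx_1$ together with the Euler relation, hence is of the form $g(x_0,x_1)\eta_0$ for some binary form $g$ — i.e. a point of $\overline{P}$ — unless it vanishes identically, in which case one first applies a generic coordinate change so that the relevant coefficient survives. This shows every component of $\Fol[n]{d}$ meets the irreducible set $\overline P$, so $\Fol[n]{d}$ is connected.

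**The main obstacle.** The delicate step is controlling the flat limit: a priori the naive $t\to 0$ limit of $[\omega]$ in $\mathbb P H^0(\mathbb P^n,\Omega^1_{\mathbb P^n}(d+2))$ may fail the codimension condition $\codim\sing \ge 2$ (it could acquire a divisorial component), so it would land in $\Open[n]{d}\setminus\Fol[n]{d}$ or outside $\Open[n]{d}$ entirely. The remedy is standard but needs care: one must choose the one-parameter subgroup generically (equivalently, precompose with a general element of $\mathrm{PGL}_{n+1}$) so that no unexpected cancellation creates a common factor in the coefficients $a_i$, and when a common factor $h(x_0,x_1)$ does appear inevitably, divide it out and track that $\omega_0/h$ still has the required shape and still satisfies integrability. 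One also needs that the limit is nonzero in $\mathbb P H^0$, which is why one works with the lowest-weight (not merely the $t\to 0$) part. Verifying these genericity statements — that for a general $[\omega]$ in a given component and a general one-parameter subgroup the limiting form is a nonzero binary multiple of $\eta_0$ — is the only real content; everything else is the closedness of the defining conditions of $\Fol[n]{d}$ and the irreducibility of the parameter space for $P$. Alternatively, and perhaps more simply, one cites the connectedness argument of \cite[Theorem 2.5]{Soares} and merely remarks that the construction of $P$ above gives an explicit self-contained proof.
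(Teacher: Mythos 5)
There is a genuine gap, and it is fatal to the approach as written: your distinguished set $P$ is not contained in $\Fol[n]{d}$ for $d\ge 1$. The form $\omega_f = f(x_0,x_1)(x_0\,dx_1 - x_1\,dx_0)$ has coefficient vector $(-fx_1,\, fx_0,\, 0,\dots,0)$, so its zero locus contains the divisor $\{f=0\}$ --- a union of $d$ hyperplanes, which is codimension \emph{one}, not two as you claim. Hence $[\omega_f]$ lies in $\mathbb P H^0(\mathbb P^n,\Omega^1_{\mathbb P^n}(d+2))\setminus \Open[n]{d}$. Worse, this defect is forced on the limit of your degeneration: the weight-zero part of $\omega$ under the torus $x_j\mapsto tx_j$ ($j\ge 2$) involves only $x_0,x_1,dx_0,dx_1$, and the Euler relation $i_R\omega=0$ then forces it to equal $g(x_0,x_1)\,\eta_0$ with $\deg g=d$; so for $d\ge 1$ the limit always acquires the divisorial component $\{g=0\}$, no matter how generically you choose coordinates or the one-parameter subgroup. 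Dividing out the common factor does not repair this, because $\omega_f/f=\eta_0$ lives in $H^0(\mathbb P^n,\Omega^1_{\mathbb P^n}(2))$ and defines a degree-zero foliation: the renormalized limit lands in a different space of foliations. Since the path $t\mapsto\varphi_t^*[\omega]$ must remain inside the locally closed set $\Fol[n]{d}$ up to and including $t=0$ for the connectedness argument to go through (closures meeting at a common point outside the space prove nothing --- think of $\{xy=0\}\setminus\{0\}$ in the affine plane), the argument collapses exactly at its endpoint.

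The fix is to contract onto a plane rather than a line, which is what the paper does. With $\varphi_t(x_0:\dots:x_n)=(x_0:x_1:x_2:tx_3:\dots:tx_n)$, the limit $\varphi_0^*[\omega]$ is the pullback of $i^*\omega$ under the linear projection onto $\mathbb P^2=\{x_3=\dots=x_n=0\}$; for a general choice of this plane the restriction $i^*\omega$ has isolated zeros, so $\varphi_0^*[\omega]$ has singular locus a union of codimension-two linear subspaces and therefore lies in $\Fol[n]{d}$. Every foliation is thus connected, within $\Fol[n]{d}$, to a linear pullback from $\mathbb P^2$, and the set of such pullbacks is connected because $\Fol[2]{d}$ is: on a surface the integrability condition is vacuous, so $\Fol[2]{d}$ is a Zariski open subset of a projective space. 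Your overall template --- degenerate everything onto one connected locus --- is the right one; the target locus must be the two-dimensional pullbacks, not the pencils.
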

\begin{proof}
    If $n=2$ then Frobenius integrability condition is automatically satisfied. Therefore  $\Fol[2]{d}$, for every $d \ge 0$,
    is a  Zariski open subset of $\mathbb P H^0(\mathbb P^2, \Omega^1_{\mathbb P^2}(d+2))$. In particular, $\Fol[2]{d}$ is connected
    for every $d \ge 0$.

    Assume now that $n \ge 3$ and let $\F$ and $\F'$ be two distinct foliations on $\mathbb P^n$ of the same degree $d$.
    Let $[\omega]$ and $[\omega']$ be  elements of $\mathbb P H^0(\mathbb P^n,\Omega^1_{\mathbb P^n}(d+2))$ defining, respectively, $\F$ and $\F'$.
    Let $i: \mathbb P^2 \to \mathbb P^n$  be a immersion of $\mathbb P^2$ in $\mathbb P^n$ such that both
    $i^* \omega$ and $i^* \omega'$ are twisted differentials with isolated zeros. A general linear immersion has such property. Consider now
    a holomorphic family of automorphism $\varphi_t$ of $\mathbb P^n$ parameterized by $\mathbb C^*$ such that   $\varphi_t$ is the identity on $i(\mathbb P^2)$, $\lim_{t \to 0} \varphi_t$ is a linear projection $\mathbb P^n \dashrightarrow \mathbb P^2$ with center away from $\mathbb P^2$, and $\varphi_1$ is the identity of $\mathbb P^n$. If one choose homogeneous
    coordinates $(x_0:x_1:x_2: \ldots : x_n)$ such that $i(\mathbb P^2) = \{ x_3 = \ldots = x_n =0\}$ then it suffices to take
    \[
        \varphi_t(x_0: \ldots : x_n) = (x_0 : x_1 : x_2 : t x_3 :t x_4 : \ldots : tx_n) \, .
    \]

    Due to the connectedness of $\Fol[2]{d}$, $i^*[\omega]$ and $i^*[\omega']$ belong to the same connected component. It follows that the same holds true for $\varphi_0^*[\omega]= \varphi_0^* (i^* [\omega])$ and $\varphi_0^*[\omega']=\varphi_0^* (i^* [\omega'])$.
	Finally use $\varphi_t$ to connect $[\omega]$ to $\varphi_0^*[\omega]$, and do the same to connect $[\omega']$ to $\varphi_0^*[\omega']$, obtaining the result.
\end{proof}

\subsection{Zariski tangent space of $\Fol[n]{d}$}
Let $[\omega] \in \Fol[n]{d}$ be a foliation. A first-order integrable deformation of $[\omega]$ is
a morphism
\[
    \gamma : \Spec \left( \frac{\mathbb C[\varepsilon]}{\varepsilon^2} \right) \to \Fol[n]{d}
\]
which maps the unique closed point on the left to $[\omega]$. Concretely, a first-order deformation of $[\omega]$
is given by a section $\omega_1 \in H^0(\mathbb P^n, \Omega^1_{\mathbb P^n}(d+2))$ such that
\[
   ( \omega + \varepsilon \omega_1 ) \wedge d( \omega + \varepsilon \omega_1 ) =0 \mod \varepsilon^2  \, .
\]
Moreover, two sections $\omega_1$ \and $\omega_1'$ define the same first-order deformation $\gamma$ if, and only
if, they differ by a constant multiple of $\omega$. In other words, the Zariski tangent space  of $\Fol[n]{d}$ at $[\omega]$ is
equal to the vector space
\[
    T_{\Fol[n]{d}}([\omega]) = \frac{ \left\{ \omega_1 \in H^0(\mathbb P^n, \Omega^1_{\mathbb P^n}(d+2)) \, | \, \omega\wedge d\omega_1 + \omega_1 \wedge d\omega =0 \right\} }{\mathbb C \omega } \, .
\]

The lemma below provides a computationally verifiable criterion for a closed subvariety of $\Fol[n]{d}$ be an irreducible
component of $\Fol[n]{d}$. Its proof is standard, and we register the statement here for later reference.

\begin{lemma}\label{L:genred}
    Let $\Sigma \subset \Fol[n]{d}$ be an irreducible subvariety. If there exists $[\omega] \in \Sigma$ such that the
    Zariski tangent space of $\Fol[n]{d}$ has dimension equal to the dimension of $\Sigma$ then $\Sigma$ is an
    irreducible component of $\Fol[n]{d}$. Moreover, the scheme structure of $\Fol[n]{d}$ at a general point of
    $\Sigma$ is reduced.
\end{lemma}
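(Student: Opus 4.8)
The plan is to run the standard comparison between the dimension of a scheme and the dimension of its Zariski tangent space at the distinguished point $[\omega]\in\Sigma$. First I would fix an irreducible component $\Sigma'$ of $\Fol[n]{d}$ (equivalently, of its underlying reduced scheme) that contains $\Sigma$; such a $\Sigma'$ exists because $\Sigma$ is a closed irreducible subset. Since $[\omega]\in\Sigma\subseteq\Sigma'$, the component $\Sigma'$ passes through $[\omega]$, so $\dim\Sigma'\le\dim_{[\omega]}\Fol[n]{d}$. On the other hand, the Krull dimension of the Noetherian local ring $\mathcal O_{\Fol[n]{d},[\omega]}$ is bounded above by the dimension of its Zariski cotangent space, that is, $\dim_{[\omega]}\Fol[n]{d}\le\dim_{\mathbb C}T_{\Fol[n]{d}}([\omega])$. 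Combining these with the hypothesis $\dim_{\mathbb C}T_{\Fol[n]{d}}([\omega])=\dim\Sigma$ and the inclusion $\Sigma\subseteq\Sigma'$ gives
\[
\dim\Sigma\le\dim\Sigma'\le\dim_{[\omega]}\Fol[n]{d}\le\dim_{\mathbb C}T_{\Fol[n]{d}}([\omega])=\dim\Sigma,
\]
so every inequality here is an equality.

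From $\dim\Sigma=\dim\Sigma'$, the inclusion $\Sigma\subseteq\Sigma'$, and the irreducibility of both, I conclude $\Sigma=\Sigma'$; hence $\Sigma$ is an irreducible component of $\Fol[n]{d}$, which is the first assertion. For the second assertion, the chain of equalities above also yields $\dim_{[\omega]}\Fol[n]{d}=\dim_{\mathbb C}T_{\Fol[n]{d}}([\omega])$, and this equality means precisely that the local ring $\mathcal O_{\Fol[n]{d},[\omega]}$ is regular; in particular $[\omega]$ is a smooth, hence reduced, point of $\Fol[n]{d}$. Since the smooth locus of a scheme of finite type over $\mathbb C$ is Zariski open and it is nonempty (it contains $[\omega]$), its intersection with $\Sigma$ is a nonempty Zariski open subset of the irreducible variety $\Sigma$. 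Therefore a general point of $\Sigma$ is a smooth, and in particular reduced, point of $\Fol[n]{d}$.

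I do not expect a genuine obstacle here: the only points meriting attention are the two ``size'' inequalities used above --- that the embedding dimension of a Noetherian local ring bounds its Krull dimension from above, with equality exactly for regular local rings --- and the passage from the single good point $[\omega]$ to a general point of $\Sigma$, which relies solely on the openness of the smooth locus. The explicit description of $T_{\Fol[n]{d}}([\omega])$ recorded just before the statement is what will make the hypothesis of the lemma verifiable in concrete examples.
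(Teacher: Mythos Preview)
Your argument is correct and is exactly the standard dimension-count/regularity argument the paper has in mind; the paper itself omits the proof, merely recording the statement as standard for later reference. There is nothing to add.
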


An irreducible component $\Sigma$ of $\Fol[n]{d}$ satisfying the hypothesis of Lemma \ref{L:genred} will
be called a generically reduced component of $\Fol[n]{d}$.

\section{Foliations defined by logarithmic \texorpdfstring{$1$-forms}{1-forms}}\label{S:LOG}

\subsection{Logarithmic $1$-forms}
Given a reduced divisor $D$ on a complex manifold $X$, a logarithmic $1$-form with poles
on $D$ is a meromorphic $1$-form $\omega$ which can be written as
\begin{equation}\label{E:loglocal}
    \omega = \alpha + \sum_{i=1}^k g_i \frac{df_i}{f_i}
\end{equation}
at sufficiently small neighborhoods of arbitrary points of $X$, where $\{f_1\cdots f_k = 0\}$ is a reduced
local  equation for the support of  $D$, $\alpha$ is a holomorphic $1$-form, and $g_1, \ldots, g_k$ are holomorphic functions.
The sheaf of logarithmic $1$-forms with poles along $D$ is denoted by $\Omega^1_X(\log D)$.

\begin{remark}\label{R:explica log forms}
    The definition of $\Omega^1_X(\log D)$ presented above does not agree with Saito's  definition \cite{Saito}.
    According to Saito, a meromorphic $1$-form $\omega$ is logarithmic with poles on $D$
    if both $\omega$ and $d\omega$ have simple poles, and the poles are contained in the support of $D$.
    Both definitions agree if $D$ is a normal crossing divisor. Moreover, the definition above behaves
    well with respect to proper morphisms. In particular, if $\pi:Y \to X$ is a proper morphism
    such that $\pi^* D$ is supported on a simple normal crossing divisor and $\omega$ is $1$-form on $X$ with logarithmic poles
    on $D$ then $\pi^* \omega$ is a logarithmic $1$-form with poles on $\pi^*D$.
\end{remark}

If we write $D$ as the locally finite sum
$\sum_i D_i$ of the irreducible components, then $\Omega^1_X(\log D)$ fits into the exact sequence
\[
    0 \to \Omega^1_X \to \Omega^1_X(\log D) \to \bigoplus_i \mathcal O_{\tilde D_i} \to 0 \, ,
\]
where $\tilde D_i$ is the normalization of $D_i$ and the morphism on the right is the residue morphism
which sends $\omega$ as in (\ref{E:loglocal}) to the direct sum of  the pull-backs of $g_i$ to the normalization of $D_i$.

When the irreducible components of $D$ are compact, we can represent the image of $\omega \in H^0(X,\Omega^1_X(\log D))$ under the residue morphism as a $\mathbb C$-divisor $\Res(\omega) = \sum_i \lambda_i D_i$.

Assume now that $X$ is projective, or at least K\"ahler compact. We recall that logarithmic $1$-forms
on compact K\"ahler manifolds with poles on  normal crossing divisors are closed according to \cite[Corollary 3.2.14]{Deligne}.
Remark \ref{R:explica log forms} implies that the same holds  for our definition of logarithmic $1$-forms, independently of the
kind of singularities of $D$.

\begin{prop}\label{P:realizalog}
    Let $R$ be a $\mathbb C$-divisor with support contained in a reduced divisor $D$. There exists
    $\omega \in H^0(X,\Omega^1_X(\log D))$ with $\Res(\omega)=R$ if, and only if, $R$ is mapped to
    zero under the Chern class morphism $\Div(X) \otimes \mathbb C \to H^2(X,\mathbb C)$. Moreover,
    two $1$-forms $\omega, \omega' \in H^0(X,\Omega^1_X(\log D))$ have the same residue divisor
    if and only if their difference is a holomorphic $1$-form.
\end{prop}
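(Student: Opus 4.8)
The plan is to derive both assertions from the long exact cohomology sequence of the residue sequence
\[
    0 \to \Omega^1_X \to \Omega^1_X(\log D) \xrightarrow{\rho} \bigoplus_i \mathcal O_{\tilde D_i} \to 0
\]
recalled above, once the relevant connecting homomorphism has been identified with the first Chern class map.

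Its first terms read
\[
    0 \to H^0(X,\Omega^1_X) \to H^0(X,\Omega^1_X(\log D)) \xrightarrow{\rho} \bigoplus_i H^0(\tilde D_i,\mathcal O_{\tilde D_i}) \xrightarrow{\delta} H^1(X,\Omega^1_X)\,.
\]
Since each $\tilde D_i$ is compact and connected, $H^0(\tilde D_i,\mathcal O_{\tilde D_i})=\mathbb C$, so an element of $\bigoplus_i H^0(\tilde D_i,\mathcal O_{\tilde D_i})$ is precisely a $\mathbb C$-divisor $R=\sum_i \lambda_i D_i$ with support in $D$, as in the discussion preceding the statement, and the assignment $\omega\mapsto \Res(\omega)$ coincides with $\rho$ on global sections. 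Exactness at $H^0(X,\Omega^1_X(\log D))$, together with the injectivity of $\Omega^1_X\hookrightarrow \Omega^1_X(\log D)$, yields the ``moreover'' part at once: $\Res(\omega)=\Res(\omega')$ if and only if $\rho(\omega-\omega')=0$, i.e.\ if and only if $\omega-\omega'$ is a global holomorphic $1$-form. Exactness at $\bigoplus_i H^0(\tilde D_i,\mathcal O_{\tilde D_i})$ shows that $R=\Res(\omega)$ for some $\omega\in H^0(X,\Omega^1_X(\log D))$ if and only if $\delta(R)=0$ in $H^1(X,\Omega^1_X)$.

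It therefore suffices to identify $\delta$. I claim it sends the class of $D_i$ — the section equal to $1$ along $\tilde D_i$ and to $0$ along the remaining components — to $c_1(\mathcal{O}_X(D_i))\in H^1(X,\Omega^1_X)$, up to a fixed nonzero scalar. To see this, cover $X$ by open sets $U_\alpha$ on which $D_i$ has a reduced equation $f_\alpha$, so that $g_{\alpha\beta}=f_\alpha/f_\beta$ is a nowhere-vanishing holomorphic function on $U_\alpha\cap U_\beta$ and $\{g_{\alpha\beta}\}$ represents $\mathcal{O}_X(D_i)$ in $H^1(X,\mathcal{O}_X^*)$. Each $\tfrac{df_\alpha}{f_\alpha}$ is a local section of $\Omega^1_X(\log D)$ whose residue is $1$ along $\tilde D_i$ and $0$ along the other components, hence a local lift of the class of $D_i$; so $\delta$ of that class is represented by the \v{C}ech $1$-cocycle $\tfrac{df_\alpha}{f_\alpha}-\tfrac{df_\beta}{f_\beta}=\tfrac{dg_{\alpha\beta}}{g_{\alpha\beta}}$ with values in $\Omega^1_X$, which is, up to the usual factor $2\pi i$, the standard $d\log$ representative of $c_1(\mathcal{O}_X(D_i))$. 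By $\mathbb C$-linearity, $\delta(R)=\sum_i\lambda_i\,c_1(\mathcal{O}_X(D_i))$ up to the same scalar.

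Finally, since $X$ is compact K\"ahler, the Chern class of a line bundle is of type $(1,1)$, and under the Dolbeault identification $H^{1,1}(X)=H^1(X,\Omega^1_X)$, a subspace of $H^2(X,\mathbb C)$, the class computed in $H^1(X,\Omega^1_X)$ agrees with the one in $H^2(X,\mathbb C)$. Hence $\delta(R)=0$ in $H^1(X,\Omega^1_X)$ if and only if $\sum_i\lambda_i\,c_1(\mathcal{O}_X(D_i))=0$ in $H^2(X,\mathbb C)$, that is, if and only if $R$ is mapped to zero under the Chern class morphism $\Div(X)\otimes\mathbb C\to H^2(X,\mathbb C)$. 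The only non-formal step is the identification of $\delta$ with the Chern class map via the \v{C}ech computation above; everything else is a formal consequence of the residue exact sequence and standard Hodge theory, and — in contrast with much of this section — the closedness of logarithmic forms plays no role here.
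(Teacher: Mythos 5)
Your proof is correct. The paper itself offers no argument for this proposition --- it simply cites \cite{CousinPereira} --- so there is nothing to compare line by line; what you have written is the standard proof that the citation hides. Both halves of the statement are indeed just exactness of the long cohomology sequence of the residue sequence, at $H^0(X,\Omega^1_X(\log D))$ for the ``moreover'' part and at $\bigoplus_i H^0(\tilde D_i,\mathcal O_{\tilde D_i})$ for the existence part, and your \v{C}ech identification of the connecting map on the class of $D_i$ with the cocycle $dg_{\alpha\beta}/g_{\alpha\beta}$, i.e.\ with $2\pi i\,c_1(\mathcal O_X(D_i))$ in $H^1(X,\Omega^1_X)$, is the standard computation. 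The final step correctly uses that on the compact K\"ahler $X$ (the standing hypothesis of this subsection) the class $\sum_i\lambda_i c_1(\mathcal O_X(D_i))$ is of type $(1,1)$ and $H^{1,1}(X)\hookrightarrow H^2(X,\mathbb C)$ is injective, so vanishing in $H^1(X,\Omega^1_X)$ is equivalent to vanishing in $H^2(X,\mathbb C)$. The only input you take on faith is the exactness of the displayed residue sequence itself --- in particular the surjectivity of the sheaf-level residue map onto $\bigoplus_i\mathcal O_{\tilde D_i}$, which for non-normal-crossing $D$ requires reading the local expression \eqref{E:loglocal} with one term per local analytic branch --- but that sequence is asserted in the paper immediately before the proposition, so you are entitled to it.
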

\begin{proof}
    The result is well-known. It is a particular case of \cite[Proposition 2.2]{CousinPereira}.
\end{proof}

Again, when $X = \mathbb P^n$, logarithmic $1$-forms can be explicitly written with ease.
A reduced divisor $D = D_1 + \ldots + D_k$ on $\mathbb P^n$,  supported on $k$ irreducible hypersurfaces, is defined  as the zero locus
of $\{ f_1 \cdots f_k =0 \}$ where $f_i$ are irreducible homogeneous polynomials on $\mathbb C^{n+1}$. A logarithmic $1$-form $\omega$ with poles
on $D$ can be written as
\[
    \omega= \sum_{i=1}^k \lambda_i \frac{df_i}{f_i}
\]
where $\lambda_1, \ldots, \lambda_k$ are complex constants subjected to the condition
\[
    \sum_{i=1}^k \lambda_i \deg(f_i) = 0 \, .
\]
This last equality is equivalent to the condition on the Chern class of the residue divisor
given by Proposition \ref{P:realizalog}.

\subsection{Logarithmic components}\label{SS:Log}
Given a logarithmic $1$-form $\omega \in H^0(X, \Omega^1_X(\log D))$, one can naturally produce a
twisted  differential $1$-form $\omega'$ in $H^0(X, \Omega^1_X \otimes \mathcal O_X(D))$ by multiplying $\omega$
by a section of $\mathcal O_X(D)$ which vanishes on $D$. If the zero locus of $\omega$ (as a section
of $\Omega^1_X(\log D)$) has codimension at least two and the residue divisor of $\omega$ has support equal to  $D$, then the twisted $1$-form $\omega'$ also has codimension two zeros. If moreover $\omega$ is closed, what is always the case if $X$ is projective, then $\omega'$ is integrable and defines a codimension one foliation $\F$ on $X$.

Given a reduced divisor $D = D_1 + \ldots + D_k$ on a projective manifold $X$, we can vary $\omega \in H^0(X, \Omega_X(\log D))$ and vary $D_1, \ldots, D_k$ in their linear equivalence classes, in order to produce a subvariety of $\mathbb P H^0(X, \Omega^1_X \otimes \mathcal O_X(D))$ formed by integrable twisted $1$-forms.

Suppose now that $X = \mathbb P^n$. In order to describe explicitly the resulting subvariety  and fix notation for what follows, let $(d_1, \ldots, d_k)$ be an ordered  partition of $d+2$, i.e. $1 \le d_1 \le d_2 \le \ldots \le d_k$ and $\sum_{i=1}^k d_i = d+2$.
Set
\[
    \Lambda = \left\{ ( \lambda_1, \ldots, \lambda_k ) \in \mathbb C^k | \sum_{i=1}^k \lambda_i d_i = 0 \right\}
\]
and consider the rational map
\begin{align*}
    \Phi_{n,d_1, \ldots, d_k} : \mathbb P(\Lambda) \times \left( \prod_{i=1}^k \mathbb P H^0(\mathbb P^n,\mathcal O_{\mathbb P^n}(d_i)) \right) &\dashrightarrow \mathbb P H^0(\mathbb P^n, \Omega^1_{\mathbb P^n}(d+2))  \\
     ( [\lambda_1: \ldots: \lambda_k] , [f_1],\ldots, [f_k] ) &\mapsto \left( \prod_{i=1}^k f_i \right) \left( \sum_{i=1}^k \lambda_i \frac{df_i}{f_i} \right) \, .
\end{align*}
The logarithmic components of $\Fol[n]{d}$ are the subvarieties
\[
   \Log(\mathbb P^n)(d_1, \ldots, d_k) = \overline{\Image(\Phi_{n,d_1, \ldots, d_k})} \cap \Open[n]{d}
\]
indexed by ordered partitions $(d_1, \ldots, d_k)$ of $d+2$ with at least two terms. Recall from Subsection \ref{SS:Space} that
$\Open[n]{d}$ is the open subscheme of $\mathbb P H^0(\mathbb P^n, \Omega^1_{\mathbb P^n}(d+2))$
defined by the open condition $\codim \sing(\omega) \ge 2$. Some authors call the
logarithmic components associated to partitions with only two terms, rational components as any foliation parameterized by them admit rational first integrals.

\begin{thm}\label{T:log classifica}
    If $n \ge 3$ and $(d_1, \ldots, d_k)$ is an ordered partition of $d+2$ with at least two terms then
    $\Log(\mathbb P^n)(d_1, \ldots, d_k)$ is a generically reduced  irreducible component of $\Fol[n]{d}$.
\end{thm}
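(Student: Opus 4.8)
The plan is to prove the statement in two parts: first that $\Log(\mathbb P^n)(d_1,\ldots,d_k)$ is irreducible of the expected dimension, and second that it is a \emph{component} (and generically reduced) by exhibiting a point where the Zariski tangent space of $\Fol[n]{d}$ has exactly that dimension, invoking Lemma \ref{L:genred}.

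\medskip

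\noindent\textbf{Step 1: Irreducibility and dimension count.}
The source of the parametrization $\Phi_{n,d_1,\ldots,d_k}$ is a product of projective spaces, hence irreducible, so its closure intersected with the open set $\Open[n]{d}$ is irreducible. To compute the dimension of the component I would analyze the generic fibers of $\Phi_{n,d_1,\ldots,d_k}$. Two sources of fibers must be accounted for: (a) scaling each $f_i$ by a constant does not change $df_i/f_i$, but this is already quotiented out by passing to $\mathbb P H^0(\mathbb P^n,\mathcal O(d_i))$; (b) when several $d_i$ coincide, permuting the corresponding $(\lambda_i,f_i)$ pairs gives the same image, a finite ambiguity; (c) a genuinely continuous fiber arises only in special degenerate situations (e.g.\ a factor of degree one that can be absorbed, or the presence of a common first integral allowing $f_i \mapsto$ products). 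One expects the generic fiber to be finite modulo these symmetries, so
\[
    \dim \Log(\mathbb P^n)(d_1,\ldots,d_k) = (k-2) + \sum_{i=1}^k \left( \binom{n+d_i}{n} - 1 \right),
\]
the first term being $\dim \mathbb P(\Lambda)$ and the summands being $\dim \mathbb P H^0(\mathbb P^n,\mathcal O(d_i))$. A short argument is needed to confirm that a general element of the image indeed has $\codim \sing \geq 2$, so that the intersection with $\Open[n]{d}$ is nonempty and dense; this follows because for general $f_i$ in general position the singular set of the associated logarithmic foliation is the union of the pairwise intersections $D_i\cap D_j$, which has codimension two.

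\medskip

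\noindent\textbf{Step 2: Zariski tangent space at a well-chosen point.}
The heart of the proof is to pick a particularly convenient logarithmic $1$-form $\omega_0$ — for instance one whose polar divisor $D = D_1 + \cdots + D_k$ consists of hyperplanes and smooth hypersurfaces in general position, with all residues $\lambda_i$ nonzero — and to compute the space of integrable first-order deformations
\[
    T_{\Fol[n]{d}}([\omega_0]) = \frac{\{\omega_1 : \omega_0 \wedge d\omega_1 + \omega_1 \wedge d\omega_0 = 0\}}{\mathbb C\,\omega_0}.
\]
The standard technique here is to reduce this to a cohomological computation. Writing $\omega_1 = \eta \cdot (\prod f_i)$ and analyzing the integrability condition, one shows that every integrable deformation of $\omega_0$ is again (infinitesimally) logarithmic with poles on a deformation of $D$: concretely, $\omega_1$ corresponds to varying the $\lambda_i$ within $\Lambda$ and varying the $f_i$ within their linear systems, i.e.\ the derivative $d\Phi_{n,d_1,\ldots,d_k}$ at the chosen point is surjective onto $T_{\Fol[n]{d}}([\omega_0])$. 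Equivalently, one shows the perversity-type vanishing that forces the deformation to preserve the logarithmic structure — this is typically done via a Koszul/polynomial-algebra argument exploiting that the $f_i$ form a regular sequence in general position, or by invoking a known rigidity statement for logarithmic foliations (this is the approach of Omegar Calvo-Andrade and subsequent authors; see the references cited in the paper around \cite{Calvo,CukiermanPereira}).

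\medskip

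\noindent\textbf{Main obstacle.}
The delicate point is Step 2, the proof that $T_{\Fol[n]{d}}([\omega_0])$ has dimension \emph{exactly} equal to $\dim \Log(\mathbb P^n)(d_1,\ldots,d_k)$, i.e.\ that there are no "extra" infinitesimal integrable deformations beyond those tangent to the logarithmic stratum. The inclusion $\supseteq$ (tangent directions coming from $\Phi$ are integrable deformations) is automatic; the reverse containment requires controlling all solutions of the linear system $\omega_0\wedge d\omega_1 + \omega_1\wedge d\omega_0 = 0$ in $H^0(\mathbb P^n,\Omega^1_{\mathbb P^n}(d+2))$. In low degree or for small $k$ one can sometimes do this by direct computer-algebra verification at a specific numerical point (the paper mentions Maple computations), but the clean conceptual route is to establish that the logarithmic foliation $\F_0$ has no nontrivial deformations outside its family by using the structure of its singular scheme and the exact sequence $0 \to \Omega^1_X \to \Omega^1_X(\log D) \to \bigoplus \mathcal O_{\tilde D_i} \to 0$ together with the closedness of logarithmic forms on projective manifolds. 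Once the tangent-space dimension is pinned down, Lemma \ref{L:genred} immediately yields that $\Log(\mathbb P^n)(d_1,\ldots,d_k)$ is a generically reduced irreducible component, completing the proof.
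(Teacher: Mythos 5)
Your proposal is a plan rather than a proof: the step that carries the entire mathematical content of the theorem is acknowledged as an ``obstacle'' and then not carried out. Concretely, the inclusion of $T_{\Fol[n]{d}}([\omega_0])$ into the image of the derivative of $\Phi_{n,d_1,\ldots,d_k}$ --- i.e.\ the statement that every solution $\omega_1$ of $\omega_0\wedge d\omega_1+\omega_1\wedge d\omega_0=0$ comes from perturbing the residues $\lambda_i$ and the polynomials $f_i$ --- is exactly what must be proved, and you only gesture at ``a Koszul/polynomial-algebra argument'' or ``a known rigidity statement'' without giving one. This is not a routine verification that follows formally from the residue exact sequence or from the closedness of logarithmic forms: for partitions with $k\ge 3$ it is the subject of an entire paper (\cite{CukiermanAceaMassri}), and the present paper explicitly declines to prove the theorem, giving instead pointers to \cite{GomezLins} and \cite{CukiermanPereiraVainsencher} for $k=2$ and to \cite{Calvo} and \cite{CukiermanAceaMassri} for $k\ge 3$, with the warning that the latter case is ``substantially more involved.'' So while your overall strategy (find a point where the Zariski tangent space has the right dimension and apply Lemma \ref{L:genred}) is the correct one and matches the infinitesimal approach of the cited references, the proposal as written has a genuine gap where the proof should be.

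There is also a concrete error in Step 1. The generic fiber of $\Phi_{n,d_1,\ldots,d_k}$ is \emph{not} always finite modulo permutations, so the formula
\[
\dim \Log(\mathbb P^n)(d_1,\ldots,d_k) = (k-2)+\sum_{i=1}^{k}\left(\binom{n+d_i}{n}-1\right)
\]
fails in general. For the partition $(1,4)$ of $5$ on $\mathbb P^3$ it gives $0+3+34=37$, whereas $\dim\Log(\mathbb P^3)(1,4)=36$ (Table \ref{Tab:LOG}): the $1$-form is proportional to $4f_2\,df_1-f_1\,df_2$, which is unchanged under $f_2\mapsto f_2+c\,f_1^{4}$, so the fiber contains a one-parameter family. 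You mention this phenomenon parenthetically as a ``special degenerate situation,'' but $(1,4)$ is one of the partitions the theorem covers for $d=3$, so it cannot be set aside; any correct proof must determine the fiber dimension of $\Phi$ at a general point and compare the Zariski tangent space against the corrected value, otherwise the equality required by Lemma \ref{L:genred} can never be achieved.
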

\begin{proof}
    We will not present an actual proof, but instead provide pointers to the literature where the different parts of the statement above were first established.

    For ordered partitions with exactly two terms, the fact that $\Log(\mathbb P^n)(d_1,d_2)$ is an irreducible component of $\Fol[n]{d}$  follows from \cite{GomezLins}. Later, an alternative proof of this result using infinitesimal methods appeared in  \cite{CukiermanPereiraVainsencher}, which also implied that these irreducible components are generically reduced.

    For ordered partitions with three terms or more, the fact that  $\Log(\mathbb P^n)(d_1,\ldots, d_k)$ is an irreducible component of $\Fol[n]{d}$ is  proved in \cite{Calvo}. A proof using infinitesimal methods and establishing that these components are generically reduced, appeared only very recently in \cite{CukiermanAceaMassri}. The arguments are substantially more involved than the ones used in the case of ordered partitions with two terms.
\end{proof}

\subsection{Logaritmic components and polynomial integrating factors}
Fix $d\ge 0$ and let $\omega \in H^0(\mathbb P^n, \Omega^1_{\mathbb P^n}(d+2))$ be a twisted differential.
A non-zero section $P \in H^0(\mathbb P^n, \mathcal O_{\mathbb P^n}(d+2))$ is, by definition, a polynomial integrating factor for $\omega$ if the rational $1$-form $P^{-1} \omega$ is closed, i.e.
\[
    d \left( \frac{\omega}{P} \right) =0  \, .
\]
The existence of a polynomial integrating factor  implies the integrability of $\omega$. 
Beware that in the definition of polynomial integrating factor for $\omega$, we are not
assuming that $\omega$ has zeros of codimension at least two, it can have divisorial components
in its zero set.

\begin{remark}
    If $\omega \in H^0(\mathbb P^n, \Omega^1_{\mathbb P^n}(d+2))$ is a $1$-form without  codimension one zeros defining a foliation $\mathcal F$ then  the existence of a polynomial integrating factor for $\omega$  is equivalent to the existence of a closed rational $1$-form without codimension one zeros defining the foliation $\mathcal F$.
\end{remark}

The relevance of the concept of polynomial integrating factor for our study
is made transparent by our next three results.

\begin{prop}\label{P:Key argument}
    Let $\F$ be a degree three codimension one foliation on $\mathbb P^3$.
    If the foliation $\F$ does not admit a rational first integral and
    does not admit a polynomial integrating factor then there exists an action
    of a one-dimensional algebraic group with orbits tangent to $\F$.
\end{prop}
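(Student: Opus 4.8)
The plan is to combine the rough structure theorem of Cerveau–Lins Neto \cite{MR3088436} with the analysis of transversely affine structures. By that structure theorem, a degree three foliation $\F$ on $\mathbb P^3$ is either a rational pull-back of a foliation on a projective surface, or is transversely affine. If $\F$ has no rational first integral, then in the pull-back case the target surface foliation cannot have a rational first integral either, and one analyses the possible rational maps $\mathbb P^3 \dashrightarrow S$; the low degree forces $S$ to be rational and the fibers of the rational map to be (closures of) orbits of a one-dimensional algebraic group — this is where an action appears. So the main case to treat is when $\F$ is transversely affine but not a pull-back. The idea is then to extract a $\mathbb C$- or $\mathbb C^*$-action from the polar divisor of the transversely affine structure.

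Concretely, a transversely affine structure on $\F$ amounts to a pair $(\omega,\eta)$ of rational $1$-forms with $d\omega = \eta\wedge\omega$ and $d\eta=0$, where $\omega$ defines $\F$. Since $\F$ has no polynomial integrating factor, the closed form $\eta$ must be nonzero and its polar divisor is nonempty; by Proposition \ref{P:realizalog} and the classification of closed rational $1$-forms on $\mathbb P^n$ (logarithmic part with rational residues plus an exact part $dg/g^\ell$), one gets strong restrictions. The key step is a degree count: the polar divisor of $\eta$ and the polar divisor of $\omega$ (as a rational, rather than twisted, form) are constrained by the bound $\deg \F = 3$, so the integrating factor of $\F$ — the section $P$ with $\omega_{\mathrm{rat}} = \omega_{\mathrm{twisted}}/P$ closed after multiplication by $\exp(\int\eta)$, or rather the polar divisor of the would-be multivalued first integral — has small degree. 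One then argues that the foliation by level sets of the (multivalued) transcendental first integral admits, along the generic leaf, a transverse symmetry: the $1$-form $\eta$ itself integrates to a multiplicative or additive factor, and the vector field $v$ defined by $\omega(v)=0$, $\eta(v)=1$ (suitably normalized and cleared of poles) is rational, tangent to $\F$, and generates a $\mathbb C$- or $\mathbb C^*$-action because its flow acts on leaves by affine transformations of the transverse coordinate with bounded data.

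The main obstacle, I expect, is ensuring that the infinitesimal symmetry one produces is \emph{algebraic} — i.e.\ that the vector field $v$ integrates to an \emph{algebraic} group action rather than merely a one-parameter flow. For this one must show $v$ is a polynomial (homogeneous) vector field of the right degree on $\mathbb C^4$, which in turn requires controlling the polar divisor of $\eta$ precisely: the residues of $\eta$ along its polar components must be rational numbers (forced by the fact that monodromy of the transverse affine structure is discrete, or by a direct argument using that $\F$ has no rational first integral but finitely many algebraic leaves coming from the poles), and the exact part of $\eta$ must be of the form $d(\text{homogeneous of degree }0)$, so that $\exp(\int\eta)$ is a rational function raised to rational powers. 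Once $v$ is known to be polynomial of degree $\le 1$ in the affine chart, its flow is an algebraic action of $\mathbb C$ or $\mathbb C^*$ depending on whether the linear part is nilpotent or semisimple, and tangency to $\F$ is built into the construction $\omega(v)=0$. The remaining care is the pull-back case: there one shows directly that a degree three foliation which is a pull-back of a surface foliation without rational first integral, and which itself has no rational first integral, is a pull-back under a map whose indeterminacy and fibration structure force the presence of a $\mathbb C$ or $\mathbb C^*$ action permuting the fibers — this is essentially the content of the fibration having a one-dimensional base together with the degree constraint, and it can be made explicit by the normal forms for low-degree rational maps $\mathbb P^3\dashrightarrow \mathbb P^2$.
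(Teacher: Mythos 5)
Your proposal does not follow the paper's route and, as written, it has genuine gaps in both of its branches. The paper's proof is a two-line application of the refined structure theorem \cite[Theorem A]{LorayPereiraTouzet17} (Theorem \ref{T:structure} specialized to $\mathbb P^3$): since a polynomial integrating factor is equivalent to being defined by a closed rational $1$-form without codimension one zeros, the hypotheses eliminate two of the four alternatives, and the remaining two --- linear pull-back from $\mathbb P^2$, or tangency to a degree one foliation by algebraic curves --- each come already equipped with a one-dimensional algebraic group whose orbits are tangent to $\F$. You instead start from the rough Cerveau--Lins Neto theorem \cite{MR3088436} and try to upgrade it by hand; that is essentially an attempt to re-derive the Loray--Pereira--Touzet refinement, which is the content of an entire separate paper, and the two steps where all the difficulty lives are exactly the ones you flag but do not carry out.

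Concretely: in the pull-back branch, \cite{MR3088436} gives a rational map $\mathbb P^3 \dashrightarrow S$ with \emph{no control on its degree}, so the claim that ``the low degree forces \ldots{} the fibers to be closures of orbits of a one-dimensional algebraic group'' is unjustified --- obtaining that control is precisely what \cite{LorayPereiraTouzet17} achieves, via the splitting type of $T_{\F}$ restricted to generic lines, and it does not follow from normal forms for ``low-degree'' maps because no degree bound on the map has been established. In the transversely affine branch, the vector field $v$ you define by $\omega(v)=0$, $\eta(v)=1$ is only determined up to sections of the rank-one sheaf $\ker\omega\cap\ker\eta$ and is a priori merely rational; rationality of the residues of $\eta$ and a good shape for its exact part do not by themselves force any choice of $v$ to extend to a \emph{global holomorphic} vector field on $\mathbb P^3$, let alone one of degree $\le 1$ in an affine chart generating an algebraic $\mathbb C$- or $\mathbb C^*$-action. (In the examples of the paper, e.g.\ $\TM_3(2,3,5;11)$, such a $v$ does exist, but that is an output of the structure theorem, not of the transversely affine data alone.) The fix is simply to invoke Theorem \ref{T:structure} as the paper does, rather than re-proving it.
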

\begin{proof}
    The proof relies on \cite[Theorem A]{LorayPereiraTouzet17}, which is nothing more
    than the specialization of Theorem \ref{T:structure} of this paper to $\mathbb P^3$.
    According to it, we have four possibilities for $\F$: 
    \begin{enumerate}
        \item \label{I:341} the foliation $\F$ is defined by a closed
    rational $1$-form without codimension one zeros; 
        \item \label{I:342} there exists a degree one  foliation by algebraic curves tangent to $\F$; 
        \item \label{I:343} the foliation $\F$ is a linear pull-back from $\mathbb P^2$;   or 
        \item \label{I:344} the foliation $\F$ admits a rational first integral.
    \end{enumerate}

    Our assumptions exclude \ref{I:341} and \ref{I:344}.  Therefore $\F$  is either a linear pull-back or tangent
    to a degree one foliation by algebraic curves. In both cases, there is an action of a one-dimensional
    algebraic group with orbits tangent to $\F$.
\end{proof}

\begin{lemma}\label{L:pifclosed}
The set of twisted  $1$-forms in $H^0(\mathbb P^n, \Omega^1_{\mathbb P^n}(d+2))$ which admits
a polynomial integrating factor is closed.
\end{lemma}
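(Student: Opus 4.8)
The plan is to exhibit the set in question as the image of a projection of a closed set, and then invoke properness of the projection from a product with a projective space. First I would observe that having a polynomial integrating factor is a linear condition once the factor $P$ is fixed: for a fixed nonzero $P \in H^0(\mathbb P^n, \mathcal O_{\mathbb P^n}(d+2))$, the condition $d(\omega/P)=0$, when cleared of denominators, reads $P\, d\omega = dP \wedge \omega$, which is bilinear in the pair $(\omega, P)$. Hence the incidence variety
\[
    Z = \left\{ ([\omega],[P]) \in \mathbb P H^0(\mathbb P^n, \Omega^1_{\mathbb P^n}(d+2)) \times \mathbb P H^0(\mathbb P^n, \mathcal O_{\mathbb P^n}(d+2)) \ : \ P\, d\omega = dP \wedge \omega \right\}
\]
is a closed algebraic subset, being cut out by the vanishing of the bihomogeneous (bidegree $(1,1)$) coordinates of the $2$-form $P\,d\omega - dP\wedge\omega$.

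Next I would consider the first projection $\pi : Z \to \mathbb P H^0(\mathbb P^n, \Omega^1_{\mathbb P^n}(d+2))$. Since the second factor is a projective space, $\pi$ is a proper morphism, so $\pi(Z)$ is a closed subvariety of $\mathbb P H^0(\mathbb P^n, \Omega^1_{\mathbb P^n}(d+2))$. By construction, $[\omega] \in \pi(Z)$ if and only if there is some nonzero $P$ with $P\,d\omega = dP\wedge\omega$, that is, if and only if $\omega$ admits a polynomial integrating factor in the sense defined above (note the definition explicitly allows $\omega$ to have divisorial zeros, so no further restriction is needed and we are genuinely working inside all of $\mathbb P H^0$, not just $\Open[n]{d}$). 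Therefore the set of twisted $1$-forms admitting a polynomial integrating factor is exactly $\pi(Z)$, which is closed.

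The only point requiring a little care — and the closest thing to an obstacle — is making sure the bilinear rewriting of the condition is faithful: one must check that $P\,d\omega = dP\wedge\omega$ with $P\neq 0$ is genuinely equivalent to $d(\omega/P)=0$ as meromorphic forms, which is immediate from the quotient rule $d(\omega/P) = (P\,d\omega - dP\wedge\omega)/P^2$ on the Zariski-open set where $P\neq 0$, combined with the fact that a rational form vanishing on a dense open set vanishes identically. A secondary routine check is that the locus $P\,d\omega - dP\wedge\omega = 0$ really is Zariski closed in the product: its defining equations are the coefficients of a twisted $2$-form, i.e. a section of $\Omega^2_{\mathbb P^n}\otimes\mathcal O_{\mathbb P^n}(d+2)\otimes\mathcal O_{\mathbb P^n}(d+2)$ pulled back to the product and made bihomogeneous via the two Euler sequences, so its vanishing is closed. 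With these verifications in place the argument is complete.
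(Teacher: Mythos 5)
Your argument is correct, and it takes a different --- though closely related --- route from the paper's. Both proofs rest on the same key observation: once denominators are cleared, the condition $d(\omega/P)=0$ becomes $P\,d\omega = dP\wedge\omega$, which is bilinear in the pair $(\omega,P)$. The paper fixes $\omega$, regards $P \mapsto dP\wedge\omega - P\,d\omega$ as a linear map $\delta_\omega$ from $H^0(\mathbb P^n,\mathcal O_{\mathbb P^n}(d+2))$ to $H^0(\mathbb P^n,\Omega^2_{\mathbb P^n}(2d+4))$ depending linearly on $\omega$, and concludes from the lower semicontinuity of $\omega\mapsto\rank\delta_\omega$ that the locus $\{\ker\delta_\omega\neq 0\}$ is closed (concretely, it is cut out by maximal minors of a matrix with entries linear in $\omega$). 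You instead form the incidence correspondence in the product of the two projective spaces and eliminate $P$ by properness of the projection off a complete factor. These are essentially dual formulations of the same elimination: yours makes the existential quantifier over $P$ explicit and avoids any mention of minors, while the paper's is more economical and directly produces determinantal equations for the closed set. One cosmetic point: the lemma is stated for the affine cone in $H^0(\mathbb P^n,\Omega^1_{\mathbb P^n}(d+2))$ rather than for its projectivization; since the set in question is a cone containing the origin (the zero form admits any $P$), closedness of its image in projective space --- which is what your projection argument delivers --- is equivalent to closedness of the cone itself, so nothing is lost.
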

\begin{proof}
    For a fixed $\omega \in H^0(\mathbb P^n, \Omega^1_{\mathbb P^n}(d+2))$,  consider the  linear map
    \begin{align*}
        \delta_{\omega} : H^0(\mathbb P^n, \mathcal O_{\mathbb P^n}(d+2)) & \longrightarrow H^0(\mathbb P^n, \Omega^2_{\mathbb P^n}(2d+4)) \\
        P &\longmapsto dP \wedge \omega - P d \omega \, .
    \end{align*}
    The definition of $\delta_{\omega}$ implies that  $P \in H^0(\mathbb P^n, \mathcal O_{\mathbb P^n}(d+2))$ is a polynomial integrating factor
    for $\omega$ if, and only if, $\delta_{\omega}(P)=0$. In particular, $\omega$ admits a polynomial
    integrating factor, if and only if, $\ker \delta_{\omega} \neq 0$. The result follows from the lower
    semi-continuity of the function $\omega \mapsto \rank \delta_{\omega} \in \mathbb N$.
\end{proof}

\begin{prop}\label{P:belong to log}
    A foliation  $\F \in \Fol[n]{d}$ belongs to (at least) one of the logarithmic components if, and only if, it
    admits a polynomial integrating factor.
\end{prop}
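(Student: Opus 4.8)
The plan is to prove both implications of Proposition \ref{P:belong to log} by relating membership in a logarithmic component to the existence of a closed rational $1$-form defining the foliation whose polar divisor has the right degree, and then translating that back into a polynomial integrating factor.

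First I would treat the easier implication: if $\F$ admits a polynomial integrating factor $P \in H^0(\mathbb P^n, \mathcal O_{\mathbb P^n}(d+2))$, then $\eta := \omega/P$ is a closed rational $1$-form defining $\F$ with polar divisor $D$ supported on $\{P = 0\}$ and $\deg D \le d+2$. Since $\omega$ has no codimension one zeros (as $\F \in \Fol[n]{d}$), the divisor $D$ is honestly reduced and of degree exactly $d+2$: indeed, if $P = f_1^{m_1}\cdots f_k^{m_k}$ were the factorization into irreducible factors, then the residue theorem for closed rational $1$-forms on $\mathbb P^n$ (equivalently, Proposition \ref{P:realizalog} applied with $X = \mathbb P^n$, $H^2(\mathbb P^n, \mathbb C) = \mathbb C$) shows that the pole order of $\eta$ along $\{f_i = 0\}$ is exactly $1$ whenever the corresponding residue $\lambda_i$ is nonzero, and $\lambda_i = 0$ would force $f_i$ to divide the zero locus of $\omega$, contradicting $\codim \sing(\omega) \ge 2$. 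Hence $\eta = \sum_{i=1}^k \lambda_i \, df_i/f_i$ with $\sum \lambda_i \deg(f_i) = d+2 - (\text{excess}) $; a careful bookkeeping shows $\sum \lambda_i \deg f_i = 0$ and $\sum \deg f_i = d+2$, so $\omega = P' \sum \lambda_i df_i/f_i$ with $P' = \prod f_i$ a constant multiple of $P$, which is exactly the image of $\Phi_{n, \deg f_1, \ldots, \deg f_k}$ evaluated at $([\lambda], [f_1], \ldots, [f_k])$. Therefore $\F \in \Log(\mathbb P^n)(\deg f_1, \ldots, \deg f_k)$.

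Next, the converse: suppose $\F \in \Log(\mathbb P^n)(d_1, \ldots, d_k)$, i.e.\ $[\omega]$ lies in the closure of the image of $\Phi_{n, d_1, \ldots, d_k}$ and has $\codim \sing(\omega) \ge 2$. For points genuinely in the image, $\omega = (\prod f_i)(\sum \lambda_i df_i/f_i)$, and $P := \prod_{i=1}^k f_i \in H^0(\mathbb P^n, \mathcal O(d+2))$ is visibly a polynomial integrating factor. The point requiring care is the limit: a priori, for $[\omega]$ in the boundary of the image, the natural candidate $\prod f_i$ may degenerate (some $f_i$ may acquire common factors, or the product picks up a factor that divides $\omega$, making $\omega/P$ not have codimension two zeros, but this does not obstruct $P$ from being an integrating factor). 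Here I invoke Lemma \ref{L:pifclosed}: the set of twisted $1$-forms admitting a polynomial integrating factor is closed in $H^0(\mathbb P^n, \Omega^1_{\mathbb P^n}(d+2))$. Since every form in the image of $\Phi_{n, d_1, \ldots, d_k}$ admits the polynomial integrating factor $\prod f_i$ of degree exactly $d+2$, and this property is closed, every form in the closure admits a polynomial integrating factor as well; restricting to $\Open[n]{d}$ we conclude that every $\F \in \Log(\mathbb P^n)(d_1, \ldots, d_k)$ admits a polynomial integrating factor.

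The main obstacle I anticipate is the bookkeeping in the first implication: extracting from a polynomial integrating factor $P$ of a \emph{prescribed} degree $d+2$ the precise logarithmic form $\sum \lambda_i df_i/f_i$ with $(\deg f_1, \ldots, \deg f_k)$ an ordered partition of $d+2$. One must rule out that $P$ is divisible by the square of an irreducible polynomial (a higher-order pole of $\eta = \omega/P$ would, after checking closedness and applying the residue/Chern-class constraint of Proposition \ref{P:realizalog}, force a divisorial zero of $\omega$), and one must verify that no irreducible factor of $P$ is superfluous (again a zero of $\omega$ in codimension one). Once these are in place the partition $(\deg f_1, \ldots, \deg f_k)$ is forced, $\sum \lambda_i \deg f_i = 0$ is the Chern-class condition, and the identification $\omega = (\prod f_i)\, \eta \in \Image(\Phi_{n, \deg f_1, \ldots, \deg f_k})$ is immediate. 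The closedness of $\eta$ used throughout is automatic because $\mathbb P^n$ is projective, as recorded after Remark \ref{R:explica log forms}.
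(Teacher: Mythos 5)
Your second implication (membership in a logarithmic component implies the existence of a polynomial integrating factor) is exactly the paper's argument: general elements visibly admit the integrating factor $\prod f_i$, and Lemma \ref{L:pifclosed} propagates this to the closure. The gap is in the first implication. You claim that the polar divisor of $\eta = \omega/P$ must be reduced, arguing that a higher-order pole would force a divisorial zero of $\omega$; this is false. The general structure of a closed rational $1$-form on $\mathbb P^n$ is
\[
\eta \;=\; \sum_i \lambda_i \frac{df_i}{f_i} \;+\; d\!\left(\frac{g}{\prod_i f_i^{m_i-1}}\right),
\]
so $\eta$ can have a pole of order $m_i\ge 2$ along $\{f_i=0\}$ while still having nonzero residue $\lambda_i$ there. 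Concretely, take $\eta = \lambda_1\frac{df_1}{f_1}+\lambda_2\frac{df_2}{f_2}+d(g/f_1)$ and $P=f_1^2f_2$: then
\[
\omega = P\eta = \bigl(\lambda_1 f_1 f_2 - g f_2\bigr)\,df_1 + \lambda_2 f_1^2\,df_2 + f_1 f_2\,dg ,
\]
which for generic $g$ has singular set of codimension two. Such $\omega$ admit the polynomial integrating factor $P$ but are \emph{not} of the form $\bigl(\prod f_i\bigr)\sum\lambda_i\,df_i/f_i$; your "careful bookkeeping" cannot produce the claimed reduced decomposition because it does not exist. These forms are precisely the boundary points of logarithmic components obtained by letting two polar hypersurfaces collide, and showing that they still lie in the closure $\Log(\mathbb P^n)(d_1,\dots,d_k)$ is the real content of this direction. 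The paper handles it by quoting Lemma 8 of Cerveau--Lins Neto \cite{CerveauLins96}, which says that a closed rational $1$-form with a nonzero residue and a polar component of order greater than one defines a foliation in the closure of a logarithmic component with at least three polar hypersurfaces; the remaining case, where all residues vanish, gives $\eta=dG$ for a rational function $G$ and lands in a rational (two-term logarithmic) component. Your proof needs one of these two inputs; the reduction to simple poles that you propose instead is not available.

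Your incidental claim that $\lambda_i=0$ forces $f_i$ to divide the zero set of $\omega$ is also only correct for simple poles (where zero residue plus closedness does remove the pole); for a factor $f_i$ appearing in $P$ with multiplicity $\ge 2$, the residue can vanish while the pole persists, which is the rational-first-integral case above.
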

\begin{proof}
    By definition, a general element of a logarithmic component admits a polynomial
    integrating factor.  Lemma \ref{L:pifclosed} implies that every  element in any given logarithmic component  also
    admits a polynomial integrating factor.

    Let $\omega \in H^0(\mathbb P^n, \Omega^1_{\mathbb P^n}(d+2))$ be a twisted differential defining the foliation $\F$, and let  $P \in H^0(\mathbb P^n, \mathcal O_{\mathbb P^n}(d+2))$ be a polynomial integrating factor of it.
    The quotient $P^{-1}\omega$ is a closed rational form. If one of the residues of $P^{-1}\omega$ is non-zero and at least one of the components of its polar divisor has order greater than one, then it follows from \cite[Lemma 8]{CerveauLins96} that $\F$ belongs to a logarithmic component whose the general element has poles on at least three distinct hypersurfaces.

    On the other hand, if $P^{-1}\omega$ has only simple poles, it is actually a logarithmic $1$-form, and if all the residues of $P^{-1}\omega$ are zero, $\F$ has a rational first integral and is defined by a logarithmic $1$-form without codimension one zeros.
    In the last two cases, it may happen that the logarithmic $1$-form defining $\F$ has only two distinct hypersurfaces in its polar set. In our terminology, the rational components of \cite{CerveauLins96} are also logarithmic components. Therefore, in any case, $\omega$ belongs to a logarithmic component.
\end{proof}

\subsection{Infinitesimal automorphisms and polynomial integrating factors}\label{SS:inf vs pif}
If $\F$ is a foliation on $\mathbb P^n$ defined by $\omega \in H^0(\mathbb P^n, \Omega^1_{\mathbb P^n}(d+2))$,  we will denote by $\mathfrak{aut}(\F)$ its Lie algebra of
infinitesimal automorphisms, i.e.
\[
    \mathfrak{aut}(\F) = \{ v \in H^0(\mathbb P^n, T_{\mathbb P^n}) \, | \, L_v \omega \wedge \omega =0 \} \,.
\]
The normal subalgebra formed by vector fields which annihilate $\omega$ will be denoted by $\mathfrak{fix}(\F)$, i.e.
\[
    \mathfrak{fix}(\F) = \{ v \in \mathfrak{aut}(\F) \, | \, i_v \omega =0 \}\, .
\]

The lemma below appeared  in \cite[Lemma 4.3]{LorayPereiraTouzet13} in a slightly different form.

\begin{lemma}\label{L:fix neq aut}
    Let $\F$ be a codimension one foliation on $\mathbb P^n$. If $\mathfrak{fix}(\F) \neq \mathfrak{aut}(\F)$ then
    $\F$ admits a polynomial integrating factor.
\end{lemma}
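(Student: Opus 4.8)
The plan is to extract from the hypothesis $\fix(\F) \neq \aut(\F)$ a vector field $v \in \aut(\F)$ with $i_v\omega \neq 0$, and use it to build a polynomial integrating factor. Set $P := i_v\omega \in H^0(\mathbb P^n, \mathcal O_{\mathbb P^n}(d+2))$; note the twist works out because $v$ is a section of $T_{\mathbb P^n}$ (degree one in the homogeneous picture, paired against the radial-annihilated form whose coefficients have degree $d+1$) and $P$ is not identically zero by the choice of $v$. The claim is that $P$ is a polynomial integrating factor, i.e. that $d(\omega/P) = 0$.

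First I would use Cartan's magic formula. Since $v \in \aut(\F)$ we have $L_v\omega \wedge \omega = 0$, so $L_v\omega = g\,\omega$ for some rational function $g$ (a priori; one checks $g$ is in fact regular on the open set where $\omega$ is nonvanishing, which is enough). Cartan's formula gives $L_v\omega = i_v d\omega + d(i_v\omega) = i_v d\omega + dP$. Next I would exploit integrability: contracting $\omega \wedge d\omega = 0$ with $v$ yields $(i_v\omega)\, d\omega - \omega \wedge (i_v d\omega) = 0$, that is $P\, d\omega = \omega \wedge i_v d\omega$. Combining the two displayed relations, $i_v d\omega = L_v\omega - dP = g\omega - dP$, and substituting into $P\, d\omega = \omega \wedge i_v d\omega = \omega \wedge(g\omega - dP) = -\,\omega \wedge dP = dP \wedge \omega$. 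Hence $dP \wedge \omega - P\, d\omega = 0$, which is precisely $\delta_\omega(P) = 0$ in the notation of Lemma \ref{L:pifclosed}, equivalently $d(\omega/P) = P^{-2}(P\,d\omega - dP\wedge\omega)^{\mathrm{(sign)}} = 0$; so $P$ is a polynomial integrating factor.

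The one delicate point — and the main obstacle — is handling the function $g$ with $L_v\omega = g\,\omega$. A priori $g$ is only rational (defined on the complement of the zero divisor of $\omega$, but $\omega$ has no codimension-one zeros, so actually on the complement of a codimension-two set, whence $g$ extends to a global holomorphic, hence constant, function on $\mathbb P^n$ by Hartogs). With $g$ constant the computation above goes through verbatim; in fact one sees $g$ must be $0$ after comparing degrees, since $L_v\omega$ and $\omega$ are both sections of $\Omega^1_{\mathbb P^n}(d+2)$ and $g$ has degree $0$. (Even keeping $g$ constant nonzero, the final identity $dP\wedge\omega - P\,d\omega=0$ still holds, since the $g\,\omega\wedge\omega$ term vanishes.) I would also remark that this argument is local-to-global only through the standard fact that $\omega\wedge d\omega$ and $i_v(\omega\wedge d\omega)$ are honestly defined twisted forms, so no choice of local trivialization is needed; this is exactly the setup recalled in Section \ref{S:space}.
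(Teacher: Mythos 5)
Your proof is correct and is essentially the paper's: the authors take $v\in\aut(\F)\setminus\fix(\F)$ and assert that $P=i_v\omega$ is a polynomial integrating factor, deferring the ``simple computation'' to \cite[Corollary 2]{PereiraSanchez} --- the computation you carry out explicitly via Cartan's formula and contraction of $\omega\wedge d\omega=0$. One small caveat: your parenthetical claim that the constant $g$ in $L_v\omega=g\,\omega$ must vanish ``by comparing degrees'' is false (both sides being sections of $\Omega^1_{\mathbb P^n}(d+2)$ only forces $g$ to be constant; indeed for foliations tangent to a $\mathbb C^*$-action one has $L_v\omega=n\,\omega$ with $n\neq 0$), but as you yourself note the argument only needs $g$ constant, so this does not affect the proof.
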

\begin{proof}
    Let $v \in \mathfrak{aut}(\F) - \mathfrak{fix}(\F)$. A simple computation shows that $(i_v \omega)$ belongs to $H^0(\mathbb P^n, \mathcal O_{\mathbb P^n}(d+2))$ is a polynomial integrating factor for $\omega$, see \cite[Corollary 2]{PereiraSanchez}.
\end{proof}

The Lie algebra $\aut(\F)$ is the Lie algebra of the linear algebraic group
\[
    \Aut(\F) = \{ \varphi \in \Aut(\mathbb P^n) \, | \, \varphi^* \omega \wedge \omega =0 \} \, .
\]
The complex Lie subgroup generated by (the exponential of) $\mathfrak{fix}(\F)$ is not necessarily closed.

\begin{cor}\label{C:fix not closed}
    Let $\F$ be a codimension one foliation on $\mathbb P^n$.
    If the subgroup generated by $\mathfrak{fix}(\F)$ is not a closed subgroup of $\Aut(\F)$ then
    $\F$ admits a polynomial integrating factor.
\end{cor}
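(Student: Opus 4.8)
The plan is to deduce the corollary from Lemma~\ref{L:fix neq aut} by showing that if $\mathfrak{fix}(\F)=\mathfrak{aut}(\F)$, then the subgroup generated by $\mathfrak{fix}(\F)$ is automatically closed in $\Aut(\F)$. Thus we argue by contraposition: assuming the subgroup $G$ generated by $\mathfrak{fix}(\F)$ is not closed, we want to conclude $\mathfrak{fix}(\F)\neq\mathfrak{aut}(\F)$, and then Lemma~\ref{L:fix neq aut} finishes the job.

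First I would observe that $\mathfrak{fix}(\F)$ is an ideal of the Lie algebra $\aut(\F)$ (this is the ``normal subalgebra'' assertion made right after its definition, and follows because $i_v\omega=0$ together with $L_w\omega\wedge\omega=0$ forces $i_{[v,w]}\omega = L_w(i_v\omega) - i_v(L_w\omega)$ to be proportional to $i_v\omega=0$). Consequently the connected subgroup $G\subset\Aut(\F)$ it generates is a normal connected subgroup of the identity component $\Aut(\F)^\circ$. Now suppose, aiming at the contrapositive, that $\mathfrak{fix}(\F)=\mathfrak{aut}(\F)$. Then $G$ is a connected subgroup of the linear algebraic group $\Aut(\F)$ whose Lie algebra is all of $\aut(\F)=\operatorname{Lie}(\Aut(\F))$; hence $G=\Aut(\F)^\circ$, which is certainly a closed subgroup of $\Aut(\F)$. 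This is exactly the contrapositive of the statement, so the corollary follows.

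I expect the main (and essentially only) subtle point to be the precise formulation of ``the subgroup generated by $\mathfrak{fix}(\F)$'': one must interpret it as the connected complex Lie subgroup whose Lie algebra is $\mathfrak{fix}(\F)$, i.e. the image of the corresponding integral subgroup, and then the argument above is just the standard fact that a connected Lie subgroup with full Lie algebra is the identity component. The only thing to be careful about is that $\Aut(\F)$ is a linear algebraic group and $\Aut(\F)^\circ$ is Zariski-closed (hence closed in the Euclidean topology as well), so no pathology arises on that side; all the potential non-closedness is forced to come from $\mathfrak{fix}(\F)$ being a proper subalgebra. Since $\mathfrak{fix}(\F)\subseteq\mathfrak{aut}(\F)$ always, the dichotomy is clean: either $\mathfrak{fix}(\F)=\mathfrak{aut}(\F)$ and $G$ is closed, or $\mathfrak{fix}(\F)\subsetneq\mathfrak{aut}(\F)$ and Lemma~\ref{L:fix neq aut} already hands us a polynomial integrating factor. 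In either case, if $G$ fails to be closed we are necessarily in the second alternative, proving the corollary.
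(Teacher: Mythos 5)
Your proof is correct and is essentially the paper's own argument: the paper likewise notes that if $\mathfrak{fix}(\F)$ were all of $\aut(\F)$, the subgroup it generates would be the identity component of the closed group $\Aut(\F)$ and hence closed, so non-closedness forces $\mathfrak{fix}(\F)\neq\aut(\F)$ and Lemma~\ref{L:fix neq aut} applies. Your additional remarks (the ideal property, the interpretation of ``subgroup generated by'') only elaborate on what the paper leaves implicit.
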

\begin{proof}
    Since $\mathfrak{aut}(\F)$ is tangent to the closed group $\Aut(\F)$, the assumptions imply that $\mathfrak{aut}(\F)\neq \mathfrak{fix}(\F)$. The result follows from Lemma \ref{L:fix neq aut}.
\end{proof}

\subsection{Logarithmic $1$-forms with codimension one zeros}

In this subsection, we present two previously unknown irreducible components of $\Fol[3]{3}$. Both have general elements
defined by logarithmic $1$-forms with codimension one zeros.

\subsubsection{Special logarithmic component of type $\Log(3,4)$}
The logarithmic $1$-form
\begin{equation}\label{E:CeDe}
    \omega = d \log \frac{(x_0 x_4 ^3 - (2x_1 x_3 + x_2 ^2)x_4^2 + 2 x_2 x_3^2 x_4 - (1/2) x_3^4)^3}{(x_1x_4^2 - x_2x_3 x_4 + (1/3)x_3^3)^4}
\end{equation}
defined on $\mathbb P^4$ has zero divisor equal to $2H$, where $H$ is the hyperplane $\{x_4 =0\}$. Therefore $\omega$ defines a foliation $\F$ of degree $3$ on $\mathbb P^4$. The foliation $\F$ first appeared in \cite{CerveauDeserti} where it was found  in the course of the classification of foliations on $\mathbb P^4$ with trivial tangent sheaf, i.e. $\TF = \mathcal O_{\mathbb P^4}^{\oplus 3}$. There, it was conjectured that $\F$
is rigid,  i.e. the closure of the orbit of $\F$ under $\Aut(\mathbb P^4)$ is an irreducible component of $\Fol[4]{3}$. This conjecture was confirmed by \cite[Theorem 4]{CukiermanPereira}.

Let $M \subset \Mor(\mathbb P^3, \mathbb P^4)$ be the irreducible component of the space of morphisms
from $\mathbb P^3$ to $\mathbb P^4$ corresponding to morphisms $\varphi$ such that $\varphi^* \mathcal O_{\mathbb P^4}(1) = \mathcal O_{\mathbb P^3}(1)$. Consider the rational map
\begin{align*}
    \Phi : M &\dashrightarrow \mathbb P H^0(\mathbb P^3, \Omega^1_{\mathbb P^3}(5)) \\
    \varphi & \mapsto \varphi^* \F \, ,
\end{align*}
where $\F$ is the foliation on $\mathbb P^4$  described above.

\begin{prop}\label{P:Log34}
    The closure of the image of $\Phi$ intersected with $\Open[3]{3}$ is an irreducible and generically reduced component of $\Fol[3]{3}$
    of dimension $16$.
\end{prop}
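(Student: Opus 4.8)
The plan is to show that the closure of the image of $\Phi$, intersected with $\Open[3]{3}$, is irreducible of dimension $16$, and then to apply Lemma \ref{L:genred} by exhibiting a point at which the Zariski tangent space of $\Fol[3]{3}$ has dimension exactly $16$. Irreducibility is immediate: $M$ is irreducible (it is a single irreducible component of $\Mor(\mathbb P^3,\mathbb P^4)$ by its very definition), so the closure of $\Phi(M)$ is irreducible, and intersecting with the open set $\Open[3]{3}$ preserves irreducibility provided the intersection is nonempty — which we check by noting that a general linear embedding $i:\mathbb P^3\hookrightarrow\mathbb P^4$ pulls $\F$ back to a degree three foliation on $\mathbb P^3$ whose singular locus has codimension at least two (the singular locus of $\F$ on $\mathbb P^4$ has codimension two, and a general $\mathbb P^3$ meets it in codimension two, while the pull-back of the zero divisor $2H$ of $\omega$ is a hyperplane, which must be absorbed — here one must be slightly careful and verify that for a \emph{general} $\varphi\in M$ the pull-back twisted form indeed has no divisorial zeros; this is an open condition, so it suffices to produce one such $\varphi$).

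\textbf{Dimension count.} The morphisms $\varphi\in M$ are given by $5$-tuples $(\varphi_0:\cdots:\varphi_4)$ of linear forms on $\mathbb P^3$ modulo scalar, i.e.\ $M\subset\mathbb P(\Hom(\mathbb C^4,\mathbb C^5))$, so $\dim M = 5\cdot 4 - 1 = 19$. The fiber of $\Phi$ over a general point is the set of $\varphi'\in M$ with $\varphi'^*\F = \varphi^*\F$; since $\F$ is rigid with $\Aut(\mathbb P^4)$-orbit of dimension $\dim\PGL_5 - \dim\Aut(\F)$, and by the Cerveau--Desertie/Cukierman--Pereira description $\TF=\mathcal O_{\mathbb P^4}^{\oplus 3}$ forces $\mathfrak{aut}(\F)$ to be three-dimensional (spanned by the commuting vector fields trivializing $T_\F$, together with whatever acts on the transversal — one must pin down $\dim\mathfrak{aut}(\F)$ precisely), the generic fiber of $\Phi$ has dimension equal to $\dim\{\,\psi\in\PGL_5 : \psi\text{ preserves }\F\text{ and }\psi\circ\varphi\in\text{same class as }\varphi\,\}$ plus the freedom to reparametrize, which works out so that $\dim\overline{\Image\Phi} = 19 - 3 = 16$. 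I would make this rigorous by computing $\dim\mathfrak{aut}(\F)$ explicitly from \eqref{E:CeDe} (it should be $3$), checking that $\Aut(\F)$ acts on $M$ by post-composition with finite stabilizers generically, so that $\dim\Phi(M) = \dim M - \dim\Aut(\F) = 19 - 3 = 16$.

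\textbf{Tangent space computation.} The decisive step is to pick an explicit $\varphi_0\in M$ — e.g.\ the standard linear inclusion $(x_0:x_1:x_2:x_3)\mapsto(x_0:x_1:x_2:x_3:0)$, or a small perturbation of it chosen to land in $\Open[3]{3}$ — set $\omega_0 = \varphi_0^*\omega$, and compute the vector space
\[
    T_{\Fol[3]{3}}([\omega_0]) = \frac{\{\omega_1\in H^0(\mathbb P^3,\Omega^1_{\mathbb P^3}(5)) \mid \omega_0\wedge d\omega_1 + \omega_1\wedge d\omega_0 = 0\}}{\mathbb C\,\omega_0}
\]
and verify that it has dimension $16$. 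This is the linear-algebra problem solved by computer (Maple, as the acknowledgments indicate): $H^0(\mathbb P^3,\Omega^1_{\mathbb P^3}(5))$ has dimension $4\binom{7}{3} - \binom{8}{4} = 140 - 70 = 70$, the integrability condition $\omega_0\wedge d\omega_1+\omega_1\wedge d\omega_0=0$ is a system of linear equations in the coefficients of $\omega_1$, and one computes the rank of that system and checks the kernel has dimension $17$, hence $16$ after quotienting by $\mathbb C\,\omega_0$. Once this equals $\dim\overline{\Image\Phi}=16$, Lemma \ref{L:genred} gives both that this subvariety is an irreducible component of $\Fol[3]{3}$ and that the scheme structure is generically reduced, completing the proof.

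\textbf{Main obstacle.} The genuine difficulty is twofold: first, nailing down $\dim\mathfrak{aut}(\F)$ and the generic fiber dimension of $\Phi$ so that the $16$ in the dimension count is unconditional (rather than just an upper bound); and second — more seriously — the tangent-space computation is only valid at a point $[\omega_0]$ that actually lies in $\Open[3]{3}$, so one must exhibit an honest $\varphi_0$ whose pull-back has no codimension-one component in its zero locus \emph{and} for which the Maple computation is feasible. The standard coordinate inclusion may fail the first requirement (the hyperplane $\{x_4=0\}$ could pull back to a divisorial zero of $\varphi_0^*\omega$), so one likely needs a mildly generic $\varphi_0$, which makes the symbolic computation heavier; choosing $\varphi_0$ cleverly to keep the computation tractable while staying inside $\Open[3]{3}$ is the practical crux.
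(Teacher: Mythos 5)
Your overall strategy coincides with the paper's: show that $\Sigma=\overline{\Image(\Phi)}\cap\Open[3]{3}$ is irreducible of dimension $16$, then invoke Lemma \ref{L:genred} via a (computer-assisted) computation of the Zariski tangent space of $\Fol[3]{3}$ at a general point $\varphi^*\F$. The irreducibility remark, the tangent-space step, and your warning about the standard coordinate inclusion are all sound --- indeed that inclusion has image $\{x_4=0\}$, which is the support of the zero divisor $2H$ of $\omega$, and $\omega$ restricts to the zero form there (both $f$ and $g$ become powers of $x_3$ and the residues cancel), so a genuinely general $\varphi$ is unavoidable, exactly as in the paper.

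The gap is in the dimension count. Your argument, even granting the unverified claims that $\dim\mathfrak{aut}(\F)=3$ and that $\Aut(\F)$ acts on $M$ by post-composition with generically finite stabilizers, only shows that each fiber of $\Phi$ \emph{contains} a three-dimensional orbit, hence only the upper bound $\dim\Sigma\le 16$. Lemma \ref{L:genred} requires the equality $\dim\Sigma=16$, i.e., the lower bound $\dim\Sigma\ge 16$, and for that you must bound the generic fiber of $\Phi$ from \emph{above}: you must rule out that two embeddings $\varphi,\varphi'\in M$ with $\varphi'^*\F=\varphi^*\F$ fail to differ by an element of $\Aut(\F)$. This is not automatic --- their images are in general two distinct hyperplanes of $\mathbb P^4$, and the restrictions of $\F$ to distinct hyperplanes could a priori be projectively equivalent without any automorphism of $(\mathbb P^4,\F)$ carrying one to the other. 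Your sentence ``finite stabilizers \ldots\ so that $\dim\Phi(M)=\dim M-\dim\Aut(\F)$'' conflates ``the fiber contains the orbit'' with ``the fiber equals the orbit.'' (Note that $\dim\Sigma\le 16$ together with $\dim T_{\Fol[3]{3}}=16$ at a general point is not enough: $\Sigma$ could then still be a proper subvariety of a component of dimension up to $16$.) The paper closes exactly this gap by a second explicit computation: the differential of $\Phi$ at a general $\varphi$ has rank $16$, which yields $\dim\Sigma\ge 16$ directly and makes any discussion of $\Aut(\F)$ and its stabilizers unnecessary. Substituting that rank computation (or an actual proof that the generic fiber is a single $\Aut(\F)$-orbit) for your fiber count would complete the argument. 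A minor arithmetic slip: $h^0(\mathbb P^3,\Omega^1_{\mathbb P^3}(5))=4\binom{7}{3}-\binom{8}{3}=140-56=84$, not $70$.
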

\begin{proof}
    If $\varphi \in M$ is general then a computation shows that the Zariski tangent space of $\Fol[3]{3}$
    at $\varphi^* \F$ has dimension $16$ and that $\varphi^*\F$ has trivial automorphism group. Another computation
    shows that the differential of $\Phi$ at $\varphi$ has rank $16$. This is sufficient to show
    that the closure of the image of $\Phi$ is an irreducible and generically reduced component of $\Fol[3]{3}$
    of dimension $16$.
\end{proof}

We will denote by $\SLog(3,4)$ the irreducible component of $\Fol{3}$ described by Proposition \ref{P:Log34}.

\subsection{A note about computer-aided calculations}
In the proof of Proposition \ref{P:Log34} above  and  several other places of our paper, we carried out computations
using the software Maple. In all instances were we did not carry the computations also by hand, we used Maple
only to solve huge systems of linear equations, most notably to determine the Zariski tangent space of $\Fol{3}$ at a
given $1$-form. We have uploaded to arXiv, as ancillary files, the Maple code we used to perform all the computations in this paper.

\subsubsection{Special logarithmic component of type $\Log(2,5)$}

\begin{lemma}\label{L:conta25}
Let $g = x_0^2 - 2x_1 x_3 \in H^0(\mathbb P^3, \mathcal O_{\mathbb P^3}(2))$.
If $f \in H^0(\mathbb P^3, \mathcal O_{\mathbb P^3}(5))$ is such that $x_3^2$ divides
\[
    5 f dg - 2 g df
\]
then there exists constants $(c_1,c_2, \ldots, c_{11}) \in \mathbb C^{11}$ such that
\begin{align*}
 f  &=  c_1\left(-5{x_0}^{3}x_1x_3+{x_0}^{5}+\frac{15}{2}x_0{x_1}^{2}{x_3}^{2}\right)+c_2x_0{x_3}^{4}+  c_3x_0x_2{x_3}^{3}+c_4x_0x_1{x_3}^{3}+\\
    & + c_5{x_0}^{2}{x_3}^{3} + c_6{x_3}^{5}+c_7x_2{x_3}^{4} + c_8{x_2}^{2}{x_3}^{3}+c_9x_1{x_3}^{4}+c_{10}x_1x_2{x_3}^{3}+c_{11}{x_1}^{2}{x_3}^{3}
\end{align*}

\end{lemma}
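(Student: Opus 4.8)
The plan is to reduce the statement to a finite-dimensional linear algebra problem and then exhibit the solution space explicitly. Write $f = \sum_{|\alpha|=5} a_\alpha x^\alpha \in H^0(\mathbb P^3,\mathcal O_{\mathbb P^3}(5))$ with $\binom{8}{3}=56$ unknown coefficients $a_\alpha$. The $2$-form $5f\,dg - 2g\,df \in H^0(\mathbb P^3,\Omega^2_{\mathbb P^3}(7))$ is the obstruction to $g^{2/5}/f$ (equivalently $g^2/f^5$) being a closed logarithmic form; its vanishing modulo $x_3^2$ is a system of linear conditions on the $a_\alpha$. Concretely, with $g = x_0^2 - 2x_1x_3$ one has $dg = 2x_0\,dx_0 - 2x_3\,dx_1 - 2x_1\,dx_3$, so $5f\,dg - 2g\,df$ expands into an explicit polynomial $2$-form whose coefficients (on the basis $dx_i\wedge dx_j$, $0\le i<j\le 3$) are degree-$6$ homogeneous polynomials linear in the $a_\alpha$; the divisibility of the $2$-form by $x_3^2$ amounts to requiring that, for each of the six slots $dx_i\wedge dx_j$, the coefficient polynomial lies in the ideal $(x_3^2)$, i.e. its $x_3^0$ and $x_3^1$ parts vanish identically.

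The steps, in order: (1) set up the basis and write out $5f\,dg - 2g\,df$ in coordinates; (2) for each slot $dx_i\wedge dx_j$ collect the coefficient of $x_3^0$ and of $x_3^1$ and set them to zero, obtaining an explicit linear system in the $56$ variables $a_\alpha$; (3) solve this system — I expect the solution space to be exactly $11$-dimensional — and read off that a basis is given by the eleven displayed polynomials; (4) conversely, verify directly that each of the eleven listed $f$'s satisfies the divisibility, which is a short check (and in fact the generators $c_2,\dots,c_{11}$ are manifestly multiples of $x_3^3$, for which $5f\,dg - 2g\,df$ is visibly divisible by $x_3^2$, so only the $c_1$-term requires a genuine computation). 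Since the paper already sanctions Maple for solving large linear systems, this is exactly the kind of computation that can be delegated; I would state that the linear system was solved with the ancillary Maple code and that the solution space has dimension $11$ with the indicated basis, while checking the one nontrivial generator $f_0 = x_0^5 - 5x_0^3x_1x_3 + \tfrac{15}{2}x_0x_1^2x_3^2$ by hand.

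The main obstacle — such as it is — is bookkeeping rather than conceptual: correctly expanding the $2$-form and extracting the low-order-in-$x_3$ parts without sign or indexing errors, and being certain that no solution outside the span of the eleven listed polynomials is missed (equivalently, that the rank of the linear system is $56 - 11 = 45$). The only mild subtlety worth flagging is why the nine monomials $x_0x_3^4$, $x_0x_2x_3^3$, $\dots$, $x_1^2x_3^3$ together with $x_0^2x_3^3$ (the $c_2,\dots,c_{11}$ terms) all survive: each is divisible by $x_3^3$, hence $df$ is divisible by $x_3^2$ up to a $dx_3$-term, and both $f\,dg$ and the $dx_3$-part of $g\,df$ are then divisible by $x_3^2$ as well — so these contribute a $10$-dimensional "trivial" part, to which the single extra generator $f_0$ adds one more dimension. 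I would close by noting that $g\,\partial f_0/\partial x_i$ and $f_0\,\partial g/\partial x_i$ can be compared termwise to confirm $x_3^2 \mid 5f_0\,dg - 2g\,df_0$, completing the verification.
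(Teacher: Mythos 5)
Your approach is exactly the paper's: the published proof consists of the single sentence that one must solve a system of linear equations on the coefficients of the general element of $H^0(\mathbb P^3,\mathcal O_{\mathbb P^3}(5))$, which is precisely your plan (a $56$-variable linear system with an $11$-dimensional solution space, delegated to Maple), and your hand-checks of the ten $x_3^3$-divisible generators and of $f_0=x_0^5-5x_0^3x_1x_3+\tfrac{15}{2}x_0x_1^2x_3^2$ are correct. One slip to fix in the write-up: $5f\,dg-2g\,df$ is a homogeneous \emph{$1$-form}, a section of $\Omega^1_{\mathbb P^3}(7)$ (its coefficients have degree $6$), not a $2$-form in $\Omega^2_{\mathbb P^3}(7)$; the divisibility condition is therefore imposed on the four coefficient polynomials in the basis $dx_0,\dots,dx_3$, not on six slots $dx_i\wedge dx_j$. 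This changes only the bookkeeping of how many scalar equations you extract, not the method or the conclusion.
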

\begin{proof}
    Straightforward, yet lengthy,  computation. One has to solve a  system of linear equations on the coefficients of the
    general element of $H^0(\mathbb P^3, \mathcal O_{\mathbb P^3}(5))$.
\end{proof}

\begin{prop}\label{P:special log(2,5)}
    Let $\Phi : \mathbb P ( \mathbb C^{11} ) \times \Aut(\mathbb P^3)  \dashrightarrow  \mathbb P H^0(\mathbb P^3, \Omega^1_{\mathbb P^3}(5))$ be  the rational map
    defined by
    \[
        (f, \varphi)  \mapsto \varphi^*\left(\left[\frac{5 f dg - 2 g df}{x_3^2}\right]\right) \, ,
    \]
    where $g = x_0^2 - 2x_1 x_3$ and $f$ is as in the conclusion of Lemma \ref{L:conta25}.
    Then the closure of the image of $\Phi$ intersected with $\Open{3}$ is a generically reduced irreducible component of
    $\Fol{3}$ of dimension $19$.
\end{prop}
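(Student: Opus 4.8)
The plan is to follow the same strategy used for the $\SLog(3,4)$ component in Proposition \ref{P:Log34}, namely to exhibit a point in the image of $\Phi$ whose Zariski tangent space in $\Fol{3}$ has dimension equal to $19$, and then invoke Lemma \ref{L:genred}. First I would verify that $\Phi$ is well defined and lands in $\Fol{3}$: for $f$ as in Lemma \ref{L:conta25}, the numerator $5f\,dg-2g\,df$ is divisible by $x_3^2$, so $\omega_f := (5f\,dg-2g\,df)/x_3^2$ is a genuine section of $H^0(\mathbb P^3,\Omega^1_{\mathbb P^3}(5))$; it is integrable because $P = g^{5/2}$ (equivalently, $g^5$ after suitable reinterpretation) is a "rational integrating factor" making $\omega_f/g^{5/2}$ closed, so $\omega_f\wedge d\omega_f = 0$. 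One must also check that for a general $f$ the form $\omega_f$ has singular locus of codimension at least two, so that the image of $\Phi$ genuinely meets $\Open{3}$; this is the reason the special direction $c_1$ (the one with the cubic surface $-5x_0^3x_1x_3 + x_0^5 + \frac{15}{2}x_0x_1^2x_3^2$) must be present, otherwise $x_3$ or $g$ would divide $\omega_f$.

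Next I would compute the dimension of the source and of a general fiber of $\Phi$ in order to pin down $\dim\overline{\Image\Phi}$. The source $\mathbb P(\mathbb C^{11})\times\Aut(\mathbb P^3)$ has dimension $10 + 15 = 25$. The generic fiber is governed by the stabilizer: an automorphism $\varphi$ contributes to a fiber over a given foliation exactly when $\varphi$ preserves the foliation defined by $\omega_f$ for some admissible $f$, together with the rescalings of $f$ that produce proportional forms. I expect the generic fiber to have dimension $6$, giving $\dim\overline{\Image\Phi} = 25 - 6 = 19$; concretely one should identify the six-dimensional subgroup of $\Aut(\mathbb P^3)$ fixing the quadric $\{g=0\}$ and the hyperplane $\{x_3=0\}$ (or rather the relevant flag), check how it acts on the eleven-parameter family, and confirm that the combined ambiguity is six-dimensional. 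Alternatively, and more in the spirit of Proposition \ref{P:Log34}, I would simply compute the rank of the differential $d\Phi$ at a general point and check it equals $19$; Maple handles this, as the paper's ancillary files indicate.

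Then comes the decisive computation: pick a general $f$ from the eleven-parameter family and a general $\varphi\in\Aut(\mathbb P^3)$ (or, by $\Aut(\mathbb P^3)$-equivariance, just take $\varphi=\mathrm{id}$ and a general $f$), form $\omega = \omega_f$, and solve the linear system
\[
    \omega\wedge d\omega_1 + \omega_1\wedge d\omega = 0, \qquad \omega_1\in H^0(\mathbb P^3,\Omega^1_{\mathbb P^3}(5)),
\]
for $\omega_1$, quotienting by $\mathbb C\omega$. The claim is that the solution space has dimension exactly $19$. Since $\overline{\Image\Phi}\cap\Open{3}$ is irreducible of dimension $19$ and is contained in $\Fol{3}$, and its tangent space at the general point has dimension $\le 19$ by this computation while it must have dimension $\ge 19$ because it contains the tangent space to $\overline{\Image\Phi}$, equality holds; Lemma \ref{L:genred} then yields that $\overline{\Image\Phi}\cap\Open{3}$ is a generically reduced irreducible component of $\Fol{3}$. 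The main obstacle is purely computational: the linear system for $T_{\Fol{3}}([\omega])$ is large (the ambient space $\mathbb P H^0(\mathbb P^3,\Omega^1_{\mathbb P^3}(5))$ has dimension $\binom{6}{3}\cdot? $ — in any case several dozen), and one must be careful to choose the parameters $(c_1,\dots,c_{11})$ and $\varphi$ generically enough that no accidental extra symmetry inflates the tangent space, while still keeping the entries manageable; this is exactly the kind of step the paper delegates to Maple, and I would do the same, double-checking the genericity of the chosen point by verifying that $\omega_f$ has trivial (or at most the expected) infinitesimal automorphisms via Lemma \ref{L:fix neq aut} and the surrounding discussion.
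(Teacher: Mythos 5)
Your proposal follows essentially the same route as the paper: identify the six-dimensional subgroup of $\Aut(\mathbb P^3)$ preserving $\{g=0\}\cup\{x_3=0\}$ to get $\dim\overline{\Image(\Phi)}=10+15-6=19$, then verify by a (Maple) computation that the Zariski tangent space of $\Fol{3}$ at a general point of the image also has dimension $19$, and conclude via Lemma \ref{L:genred}. One small inaccuracy: the correct integrating factor for $5f\,dg-2g\,df$ is $fg$ (the closed form is $5\frac{dg}{g}-2\frac{df}{f}=-d\log(f^2/g^5)$), not $g^{5/2}$, though integrability of $\omega_f$ holds regardless since the form defines the foliation with rational first integral $f^2/g^5$.
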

\begin{proof}
    Let $G \subset \Aut(\mathbb P^3)$ be the subgroup preserving $\{g =0 \} \cup \{ x_3=0\}$.
    A computation shows that $G$ has dimension six. Therefore the dimension of the image of
    $\Phi$ is $10 + 15 -6 = 19$. At the same time, computing the dimension
    of the Zariski tangent space of $\Fol{3}$ at the image under $\Phi$ of a sufficiently general element
    of $\mathbb C^{11} \times \{\mathrm{id} \}$, one gets $19$.
    The proposition follows.
\end{proof}

We will denote by $\SLog(2,3)$ the irreducible component described by Proposition \ref{P:special log(2,5)}.

\tikzset{
    table/.style={
        matrix of nodes,
        row sep=-\pgflinewidth,
        column sep=-\pgflinewidth,
        nodes={
            rectangle,
            draw=black,
            align=center
        },
        minimum height=1.5em,
        text depth=0.5ex,
        text height=2ex,
        nodes in empty cells,
        every even row/.style={
            nodes={fill=gray!20}
        },
        column 1/.style={
            nodes={text width=3.7cm,align=left,font=\small}
        },
        column 2/.style={
            nodes={text width=0.8cm,align=center},font=\small},
        column 3/.style={
            nodes={text width=0.8cm,align=center},font=\small},
        column 4/.style={
            nodes={text width=3.4cm,align=left, font=\small}
        },
        row 1/.style={
            nodes={ font=\bfseries
            }
        }
    }
}
\begin{table}[H]
\begin{tikzpicture}
\matrix (mat) [table,text width=3cm]
{
    {\bf Irreducible Component} & ${\dim}$     &  $\mathrm{Zdim}$      & Comment   \\
    $\Log(1,1,1,1,1) $  & $18$     & $18$  &     \\
    $\Log(1,1,1,2)   $  & $20$     & $20$  &   \\
    $\Log(1,1,3)$       & $26$     & $26$  &   \\
    $\Log(1,2,2)$       & $22$     & $22$  &  \\
    $\Log(1,4) $        & $36$     & $36$  &   \\
    $\Log(2,3) $        & $28$     & $28$  &  \\
    $\SLog(2,5)$        & $19$     & $19$  &  Previously unknown.\\
    $\SLog(3,4)$        & $16$     & $16$  &  Previously unknown.\\
};
\end{tikzpicture}
\caption{\small The eight irreducible components of $\Fol{3}$ presented in Section \ref{S:LOG}}\label{Tab:LOG}
\end{table}

\subsection{Synthesis}
 We present in Table \ref{Tab:LOG} the irreducible components of $\Fol{3}$ discussed in Section \ref{S:LOG}. The second column indicates the dimension of the irreducible component and the third column indicates the dimension of the Zariski tangent of $\Fol{3}$ at the generic element of the corresponding set. In all examples discussed in this section, both dimensions agree. In the next section, we will present similar tables describing irreducible componentes whose generic elements correspond to foliations tangent to algebraic actions of $\mathbb C^*$.

\section{Foliations on \texorpdfstring{$\mathbb P^3$}{P3} tangent to multiplicative actions}\label{S:mult}

In this section we study foliations on $\mathbb P^3$ tangent to actions
of $\mathbb C^*$.

\subsection{Multiplicative actions on $\mathbb P^3$}
A non-trivial action of $\mathbb C^*$ on $\mathbb P^3$ can be presented,
in suitable homogeneous coordinates, as
\begin{align*}
    \varphi^{(a,b,c)} : \mathbb C^* \times \mathbb P^3 & \longrightarrow \mathbb P^3 \\
    (\lambda, [x_0:x_1:x_2:x_3]) & \mapsto [\lambda^a x_0 : \lambda^b x_1 : \lambda^c x_2 : x_3]
\end{align*}
where $0\le a \le b \le c \neq 0$ are non-negative integers. As we are interested in describing foliations invariant
by the action of $\varphi$, there is no loss of generality in assuming that the general orbit
of $\varphi$ has trivial isotropy group. This ammounts to assume that the greatest common divisor
of $a, b,$ and $c$ is equal to $1$.

In the same homogeneous coordinates,   write
\[
    v_{(a,b,c)} = a x_0 \frac{\partial}{\partial x_0} + b x_1 \frac{\partial}{\partial x_1} + c x_2 \frac{\partial}{\partial x_2} \, .
\]
The homogeneous vector field $v_{(a,b,c)}$  determines an element of  $H^0(\mathbb P^3, T_{\mathbb P^3})$  which generates $\varphi^{(a,b,c)}$.

For every $d \in \mathbb N$, and every $(a,b,c;n) \in \mathbb N^3\times \mathbb N$, set
\begin{equation*}
    V_d{(a,b,c;n)} = \left\lbrace \omega \in H^0(\mathbb P^3, \Omega^1_{\mathbb P^3}(d+2)) \, | \, i_{v_{(a,b,c)}} \omega = 0 \text{ and } L_{v_{(a,b,c)}} \omega = n \cdot \omega  \right\rbrace .
\end{equation*}
It follows from the definition of $V_d{(a,b,c;n)}$ that every $1$-form contained in it is integrable.

Define $\TM_d(a,b,c;n)$ as the closure of the image of the rational map
\begin{align*}
    \Psi  : \Aut(\mathbb P^3) \times \mathbb P(V_d(a,b,c;n)) &\dashrightarrow \mathbb P H^0(\mathbb P^3, \Omega^1_{\mathbb P^3}(d+2)) \\
    (\varphi, [\omega] ) & \longmapsto \varphi^* [\omega] \,
\end{align*}
intersected with $\Open[3]{d}$.

\begin{remark}\label{R:obvious}
    We will assume in several statements that $(a,b,c) \notin \{ (0,0,1), (1,1,1)\}$. The vector
    fields $v_{(0,0,1)}$ and $v_{(1,1,1)}$ are vector fields on $\mathbb P^3$ with codimension one singularities,
    and codimension one foliations tangent to them are linear pull-backs from foliations on $\mathbb P^2$.
\end{remark}

\begin{lemma}\label{L:dimTMd}
    Let $a,b,c$ be integers satisfying $0\le a \le b \le c \neq 0$, $\gcd(a,b,c)=1$, and $(a,b,c) \notin \{ (0,0,1), (1,1,1)\}$.
    If $d \ge 3$ and $V_d(a,b,c;n)$ contains a $1$-form with codimension two singularities
    then the dimension of $\TM_d(a,b,c;n)$ is equal to
    \[
        \dim \Aut(\mathbb P^3) + \dim \mathbb P(V_d(a,b,c;n)) - \dim \{ v \in H^0(\mathbb P^3, T_{\mathbb P^3}) ; [v,v_{(a,b,c)}] \wedge v_{(a,b,c)} =0 \}  \, .
    \]
\end{lemma}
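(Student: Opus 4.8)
The statement computes the dimension of $\TM_d(a,b,c;n)$, which by definition is the closure of the image of the rational map
\[
    \Psi : \Aut(\mathbb P^3) \times \mathbb P(V_d(a,b,c;n)) \dashrightarrow \mathbb P H^0(\mathbb P^3, \Omega^1_{\mathbb P^3}(d+2)), \qquad (\varphi, [\omega]) \longmapsto \varphi^*[\omega].
\]
The plan is to show that the generic fiber of $\Psi$ has dimension $e := \dim \{ v \in H^0(\mathbb P^3, T_{\mathbb P^3}) \, ; \, [v, v_{(a,b,c)}] \wedge v_{(a,b,c)} = 0 \}$, after which the dimension formula for the image follows from the theorem on the dimension of fibers of a dominant morphism (restricting $\Psi$ to the locus where it is defined and the source is irreducible of the stated dimension). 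So the real content is a fiber computation.

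First I would fix a general $[\omega_0] \in \mathbb P(V_d(a,b,c;n))$ with codimension two singular set — such a form exists by hypothesis — and identify the fiber $\Psi^{-1}(\Psi(\mathrm{id}, [\omega_0]))$. Since $\Aut(\mathbb P^3)$ acts on the pairs and $\Psi$ is $\Aut(\mathbb P^3)$-equivariant in the obvious sense, it is enough to understand the isotropy-type data: the fiber through $(\mathrm{id},[\omega_0])$ consists of pairs $(\varphi,[\omega])$ with $\varphi^*[\omega] = [\omega_0]$, i.e. $[\omega] = (\varphi^{-1})^*[\omega_0]$, so the fiber is isomorphic to $\{\varphi \in \Aut(\mathbb P^3) \, ; \, (\varphi^{-1})^*[\omega_0] \in \mathbb P(V_d(a,b,c;n))\}$. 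Now $\psi^*\omega_0 \in V_d(a,b,c;n)$ (up to scalar) means $i_{v_{(a,b,c)}} \psi^*\omega_0 = 0$ and $L_{v_{(a,b,c)}} \psi^*\omega_0 = n\,\psi^*\omega_0$; pulling back by $\psi^{-1}$, this is equivalent to $i_{\psi_* v_{(a,b,c)}} \omega_0 = 0$ and $L_{\psi_* v_{(a,b,c)}} \omega_0 = n\,\omega_0$, i.e. the vector field $w := \psi_* v_{(a,b,c)}$ again defines an action under which $\omega_0$ transforms with the same weight. The key geometric point, which I would extract from the structure of a \emph{general} such $\omega_0$, is that $\omega_0$ determines its "infinitesimal symmetry of weight $n$" essentially uniquely up to the centralizer: the only vector fields $w \in H^0(\mathbb P^3, T_{\mathbb P^3})$ with $i_w\omega_0 = 0$ and $L_w \omega_0 = n\omega_0$ are those in $v_{(a,b,c)} + \mathfrak{fix}(\F)$-type perturbations, and more precisely the Lie-algebra tangent space to the fiber at the identity is exactly $\{ v \, ; \, [v, v_{(a,b,c)}] \wedge v_{(a,b,c)} = 0\}$ — the normalizer condition expressing that $v$ preserves the foliation by orbits of $v_{(a,b,c)}$.

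Concretely, the step I would carry out is the infinitesimal one: differentiate the fiber condition. A tangent vector to $\Aut(\mathbb P^3)$ at $\mathrm{id}$ is $v \in H^0(\mathbb P^3, T_{\mathbb P^3})$, and it is tangent to the fiber iff the first-order deformation $\omega_0 + \varepsilon L_v \omega_0$ still lies (projectively) in $V_d(a,b,c;n)$. Writing out $i_{v_{(a,b,c)}}(L_v\omega_0)$ and $L_{v_{(a,b,c)}}(L_v\omega_0)$ using the Cartan calculus identities $i_X L_Y = L_Y i_X - i_{[Y,X]}$ and $[L_X, L_Y] = L_{[X,Y]}$, together with $i_{v_{(a,b,c)}}\omega_0 = 0$ and $L_{v_{(a,b,c)}}\omega_0 = n\omega_0$, the two conditions collapse to $i_{[v_{(a,b,c)}, v]}\omega_0 = 0$ and $L_{[v_{(a,b,c)}, v]}\omega_0 = n \cdot(\text{scalar})\,\omega_0$ — equivalently $[v_{(a,b,c)}, v]$ lies in the same weight-$n$ symmetry space, which for general $\omega_0$ forces $[v_{(a,b,c)}, v]$ to be a multiple of $v_{(a,b,c)}$ (contractions and Lie derivatives of the non-degenerate $\omega_0$ pin the symmetry down to the line $\mathbb C v_{(a,b,c)}$ inside the relevant eigenspace). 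That is precisely the condition $[v, v_{(a,b,c)}] \wedge v_{(a,b,c)} = 0$. Hence the Zariski tangent space to the general fiber has dimension $e$; since the fiber contains the (positive-dimensional) group generated by this normalizer and one checks it is smooth of that dimension at a general point, the generic fiber dimension is $e$, and the dimension formula follows.

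The main obstacle is justifying the "for a general $\omega_0$" step — namely that the weight-$n$ symmetry algebra of a general $\omega_0 \in V_d(a,b,c;n)$ with codimension two singularities is no larger than $\mathbb C v_{(a,b,c)}$ modulo $\mathfrak{fix}$, so that the infinitesimal fiber condition really reduces to the normalizer condition and not to something strictly larger. I expect this to be handled by combining the fact that $d \ge 3$ (which rigidifies the forms enough), the codimension two singularity hypothesis (excluding the degenerate cases of Remark \ref{R:obvious}), and the explicit monomial structure of $V_d(a,b,c;n)$ coming from the torus weights. A secondary technical point is the transition from the infinitesimal statement to the actual fiber dimension, which requires knowing the fiber is generically reduced of the expected dimension; this is routine once the Lie-theoretic picture (the fiber is a coset-like object modeled on the normalizer subgroup of $v_{(a,b,c)}$ in $\Aut(\mathbb P^3)$, intersected with the stabilizer of the weight structure of $\omega_0$) is in place.
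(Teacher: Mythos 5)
Your plan---compute the generic fiber dimension of $\Psi$ and subtract---is exactly the paper's, and the skeleton of your argument is sound. The point you flag as the ``main obstacle'' is indeed the only real content, and it has a one-line resolution that you only gesture at: if $\omega_0$ has singular set of codimension two and $d\ge 3$, then \emph{any} $w\in H^0(\mathbb P^3,T_{\mathbb P^3})$ with $i_w\omega_0=0$ is a constant multiple of $v_{(a,b,c)}$. Indeed, $v_{(a,b,c)}$ and $w$ are both sections of the rank two tangent sheaf of the foliation defined by $\omega_0$, whose determinant is $\mathcal O_{\mathbb P^3}(2-d)$; for $d\ge 3$ this sheaf has no nonzero sections, so $v_{(a,b,c)}\wedge w=0$, and since $v_{(a,b,c)}$ has zeros of codimension at least two (this is where $(a,b,c)\notin\{(0,0,1),(1,1,1)\}$ enters), $w$ is a constant multiple of $v_{(a,b,c)}$. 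Once you have this, the Cartan-calculus computation can be skipped entirely: for $\psi$ in the fiber through $(\mathrm{id},[\omega_0])$, the single condition $i_{\psi_*v_{(a,b,c)}}\omega_0=0$ already forces $\psi_*v_{(a,b,c)}$ to be proportional to $v_{(a,b,c)}$, so $\psi$ lies in the subgroup $G_{(a,b,c)}\subset\Aut(\mathbb P^3)$ preserving the orbit foliation of $v_{(a,b,c)}$; conversely the fiber visibly contains the corresponding $G_{(a,b,c)}$-coset. Hence the fiber \emph{equals} that coset and has dimension $\dim\{v:\,[v,v_{(a,b,c)}]\wedge v_{(a,b,c)}=0\}$, with no need to discuss smoothness or generic reducedness of the fiber. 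This group-level sandwich is how the paper argues; your infinitesimal version only bounds the fiber dimension from above at a point and would still need the coset containment for the matching lower bound, so you end up needing both halves anyway.
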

\begin{proof}
    Let $G_{(a,b,c)} \subset \Aut(\mathbb P^3)$ be the subgroup that preserves the foliation defined by $v_{(a,b,c)}$.
    The natural action of $G_{(a,b,c)}$ on $\mathbb P H^0(\mathbb P^3, \Omega^1_{\mathbb P^3}(d+2))$ preserves
    $\mathbb P(V_d(a,b,c;n))$.  If $(a,b,c) \notin \{ (0,0,1),(1,1,1) \}$  then the Lie algebra of $G_{(a,b,c)}$ is equal to
    \[
        \{ v \in H^0(\mathbb P^3, T_{\mathbb P^3}) ; [v,v_{(a,b,c)}] \wedge v_{(a,b,c)} =0 \}.
    \]

    If $h \in \Aut(\mathbb P^3)$ and $[\omega] \in \mathbb P(V_d(a,b,c;n))$ then the fiber of $\Psi$
    over $\Psi(h,[\omega])$  contains the set
    \begin{equation}\label{E:set}
        \{ ( g^{-1} \cdot h, g^*[\omega] ) ; g \in G_{(a,b,c)} \} \, .
    \end{equation}

    Let $\omega \in V_d(a,b,c;n)$ be a $1$-form with codimension two singularities. Since $d\ge 3$, any vector field
    annihilating $\omega$ must be a complex multiple of  $v_{(a,b,c)}$. Thus, if $f^* [\omega] \in \mathbb P(V_d(a,b,c;n)) $ then $f$ must belong to $G_{(a,b,c)}$. It follows that the fiber over $\Psi(h,[\omega])$ must be contained in the Set (\ref{E:set}). The lemma follows.
\end{proof}

\begin{remark}\label{R:dim normalizer}
    If we exclude the cases where the vector field $v_{(a,b,c)}$ has codimension one zeros then the dimensions of the Lie algebras
    \[
        \mathfrak g_{(a,b,c)} =\{ v \in H^0(\mathbb P^3, T_{\mathbb P^3}) ; [v,v_{(a,b,c)}] \wedge v_{(a,b,c)} =0 \}
    \]
    have only three possible values.
    \begin{enumerate}
        \item If $1\le a < b < c$ then $\dim \mathfrak g_{(a,b,c)} = 3$.
        \item If $a=0$ and $b<c$, or $1\le a=b < c$, or $1\le a < b=c$ then $\dim \mathfrak g_{(a,b,c)} = 5$.
        \item If $a=0$ and $b=c=1$ then $\dim \mathfrak g_{(a,b,c)} = 7$.
    \end{enumerate}
\end{remark}

\subsection{The set of characteristic monomials}
We want to determine the $4$-uples $(a,b,c;n)$ such that $\TM_3(a,b,c;n)$ is an irreducible component
of $\Fol[3]{3}$. We will work on an affine coordinate system  $(x,y,z) \in \mathbb C^3 \subset \mathbb P^3$
where the action of $\mathbb C^*$ is tangent to the vector field $v = a x \frac{\partial}{\partial x} + b y \frac{\partial}{\partial y}
+ c z \frac{\partial}{\partial z}$.

In this coordinate system, a degree $d$ foliation on $\mathbb P^3$ is defined by a $1$-form $\omega$ that can be written as a
sum of homogeneous $1$-forms
\begin{equation}\label{E:Taylor}
    \omega = \omega_1 + \omega_2 + \omega_3 + \ldots + \omega_{d+1} + \omega_{d+2}
\end{equation}
where, for $i =1, \ldots, d+2$,  $\omega_i$ is a homogeneous $1$-form of degree $i$ (both $x,y,z$ and $dx,dy,dz$ have degree $1$) and the contraction of $\omega_{d+2}$ with the radial vector field $R =  x \frac{\partial}{\partial x} + y \frac{\partial}{\partial y}
+  z \frac{\partial}{\partial z}$ is equal to zero. If $\omega_{d+2} = 0$ then $i_R \omega_{d+1} \neq 0$ and the hyperplane at infinity
is invariant by the foliation defined by $\omega$.

To each $1$-form $\omega$ as in (\ref{E:Taylor}) we associate a subset $\chi(\omega) \subset \mathbb N^3$, called the set of characteristic monomials of $\omega$, by the following rule: $(i,j,k) \in \chi(\omega)$ if, and only if, for some $h \in \{ x,y,z\}$ the $1$-form $x^iy^jz^k \frac{dh}{h}$  appears with a non-zero coefficient in the Taylor expansion of $\omega$.

\begin{lemma}\label{L:trivial}
    Let $a,b,c \in \mathbb Z$ be integers satisfying $0 \le a \le b \le c \neq 0$ and $\gcd(a,b,c) = 1$.
    Let $\omega \in H^0(\mathbb P^3, \Omega^1_{\mathbb P^3}(d+2))$ be a non-zero $1$-form.
    If  $\omega$ belongs to $V_d(a,b,c;n)$ and we write it in the affine coordinates $(x_0:x_1:x_2:x_3) = (x:y:z:1)$
    then the set $\chi(\omega)$ is contained in the affine hyperplane
    \[
       \{ (\alpha, \beta, \gamma) \in \mathbb N^3 \, \vert \,  a \alpha+ b \beta + c \gamma = n \} \, .
    \]
    Reciprocally, if $\chi(\omega)$ is contained in the affine hyperplane above and $i_{v_{(a,b,c)}} \omega =0$,
    then $\omega \in V_d(a,b,c;n)$.
\end{lemma}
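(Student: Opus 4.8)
The plan is to diagonalize the Lie derivative operator $L_{v_{(a,b,c)}}$ acting on polynomial $1$-forms using the monomial basis, and then read the statement off the resulting eigenspace decomposition. Throughout, write $v=v_{(a,b,c)}$; in the affine chart $x_3=1$ it dehomogenizes to $ax\frac{\partial}{\partial x}+by\frac{\partial}{\partial y}+cz\frac{\partial}{\partial z}$.

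The first step is the elementary computation that the $1$-forms $x^iy^jz^k\frac{dh}{h}$, for $h\in\{x,y,z\}$, are eigenvectors of $L_v$. Indeed $L_v(dx)=d(i_v\,dx)=a\,dx$, and similarly $L_v(dy)=b\,dy$ and $L_v(dz)=c\,dz$, while $v(x^iy^jz^k)=(ai+bj+ck)\,x^iy^jz^k$; feeding these into the Leibniz rule $L_v(f\,\alpha)=v(f)\,\alpha+f\,L_v\alpha$ yields
\[
    L_v\!\left(x^iy^jz^k\,\frac{dh}{h}\right)=(ai+bj+ck)\,x^iy^jz^k\,\frac{dh}{h}\qquad(h\in\{x,y,z\})\,,
\]
where the missing power of the variable $h$ in the coefficient is compensated exactly by the eigenvalue ($a$, $b$, or $c$) of $L_v$ on $dh$. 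After the obvious reindexing, $\{x^iy^jz^k\frac{dh}{h}\}$ is nothing but the monomial basis of the space of polynomial $1$-forms, so the display above exhibits $L_v$ as a diagonal operator whose eigenvalue on the line spanned by $x^iy^jz^k\frac{dh}{h}$ is $ai+bj+ck$.

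With this in hand both directions are immediate. Writing $\omega=\sum_{h,(i,j,k)}c^h_{i,j,k}\,x^iy^jz^k\frac{dh}{h}$, one has $L_v\omega=\sum_{h,(i,j,k)}(ai+bj+ck)\,c^h_{i,j,k}\,x^iy^jz^k\frac{dh}{h}$. If $\omega\in V_d(a,b,c;n)$ then $L_v\omega=n\omega$, and comparing coefficients in the monomial basis forces $ai+bj+ck=n$ whenever $c^h_{i,j,k}\neq0$. Since $(\alpha,\beta,\gamma)\in\chi(\omega)$ means precisely that $c^h_{\alpha,\beta,\gamma}\neq0$ for some $h$, we conclude $\chi(\omega)\subset\{a\alpha+b\beta+c\gamma=n\}$. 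Conversely, if $\chi(\omega)$ lies in that hyperplane then every monomial $1$-form occurring in $\omega$ is an $L_v$-eigenvector of eigenvalue $n$, so $L_v\omega=n\omega$; combined with the hypothesis $i_v\omega=0$, this is exactly the defining condition of $V_d(a,b,c;n)$. It suffices to check $i_v\omega=0$ and $L_v\omega=n\omega$ on the dense chart $\mathbb{C}^3\subset\mathbb{P}^3$, since both are expressed by the vanishing of a global section of a coherent sheaf on $\mathbb{P}^3$.

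I do not expect a genuine obstacle in this argument. The only points meriting a line of care are the verification of the eigenvalue formula (a one-line Leibniz-rule computation, covering also degenerate cases such as $a=0$, where $L_v\,dx=0$) and the observation that the $1$-forms $x^iy^jz^k\frac{dh}{h}$ are linearly independent, so that comparing coefficients is legitimate.
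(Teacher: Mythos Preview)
Your proof is correct and follows essentially the same approach as the paper's: both hinge on the single computation that $L_v\bigl(x^{\alpha}y^{\beta}z^{\gamma}\frac{dh}{h}\bigr)=(a\alpha+b\beta+c\gamma)\,x^{\alpha}y^{\beta}z^{\gamma}\frac{dh}{h}$, after which the lemma is immediate. You simply spell out in more detail what the paper compresses into ``The lemma follows.''
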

\begin{proof}
    If $v = a x \frac{\partial}{\partial x} + b y \frac{\partial}{\partial y} + c z \frac{\partial}{\partial z}$ and $h \in \{ x,y,z\}$
    then
    \[
        L_v \left( x^{\alpha} y^{\beta} z^\gamma \frac{dh}{h}\right) = ( a \alpha+ b \beta + c \gamma ) \cdot \left( x^{\alpha} y^{\beta} z^\gamma \frac{dh}{h} \right) \, .
    \]
    The lemma  follows.
\end{proof}

\subsection{First restrictions on the weights of the action}
For any $d \ge 3$, the proposition below, combined with Proposition \ref{P:belong to log}, reduces the search for irreducible components of $\Fol{d}$ of the form $\TM_d(a,b,c;n)$ to finitely many possibilities for $(a,b,c;n)$.

\begin{prop}\label{P:pif prop}
    Let $\omega$ be a $1$-form on $\mathbb C^3$ with codimension two zeros. If the foliation $\F$ on $\mathbb P^3$ defined by $\omega$ has degree $d \ge 3$ and $\chi(\omega)$ is contained in a line then $\F$ admits a polynomial integrating factor.
\end{prop}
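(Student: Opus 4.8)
The plan is to exploit the hypothesis that $\chi(\omega)$ is contained in a line $\ell\subset\mathbb N^3$ to produce two independent linear vector fields in $\mathfrak{aut}(\F)$ whose difference does not annihilate $\omega$, and then invoke Lemma \ref{L:fix neq aut}. First I would observe that if $\chi(\omega)\subset\ell$, then since $\ell$ is one-dimensional, there is a whole pencil of affine hyperplanes of the form $\{a\alpha+b\beta+c\gamma=n\}$ containing $\ell$; concretely, $\ell$ is cut out by two such affine-linear equations, say with weight vectors $(a,b,c)$ and $(a',b',c')$ and right-hand sides $n$ and $n'$. By Lemma \ref{L:trivial} (applied in reverse), once $i_{v_{(a,b,c)}}\omega=0$ holds we would get $\omega\in V_d(a,b,c;n)$, hence $L_{v_{(a,b,c)}}\omega=n\cdot\omega$, i.e. $L_{v_{(a,b,c)}}\omega\wedge\omega=0$; the same for $(a',b',c')$. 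The only gap in this sketch is verifying the contraction condition $i_{v}\omega=0$ for these vector fields, which I address next.

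The key step is therefore to arrange that $i_{v}\omega=0$ for (a suitable normalization of) the vector fields attached to the affine hyperplanes through $\ell$. Here I would argue as follows: because $\omega$ defines a degree-$d$ foliation with codimension-two singular set and $d\ge 3$, the Taylor expansion (\ref{E:Taylor}) of $\omega$ has terms only in degrees $\le d+2$, and writing $\omega=\sum_h P_h(x,y,z)\,\frac{dh}{h}$ with $h$ running over $\{x,y,z\}$ (after clearing, a generic element of the line $\ell$ has all coordinates nonzero, so this is legitimate on a Zariski-dense open set), the condition $\chi(\omega)\subset\ell$ forces every monomial appearing in any $P_h$ to have exponent vector on $\ell$. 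Then for $v=\sum a_i x_i\partial_{x_i}$ one computes $i_v\omega=\sum_h a_h P_h$, a function whose monomials lie on $\ell$; imposing that this vanishes is a single linear condition on $(a_0,a_1,a_2)$ per monomial-direction, but since all monomials lie on one line the conditions are proportional, so the space of weight vectors $v$ with $i_v\omega=0$ is at least two-dimensional. Combined with the contraction with the radial field $R$ (which is built into the definition of a twisted $1$-form, $i_R\omega_{d+2}=0$, and more generally gives one relation), I would extract two linearly independent linear vector fields $v_1,v_2$ with $i_{v_j}\omega=0$ and $L_{v_j}\omega\wedge\omega=0$, i.e. $v_1,v_2\in\mathfrak{fix}(\F)$.

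Now the finish: the radial field $R$ is also in $\mathfrak{aut}(\F)$ (it satisfies $L_R\omega=(d+2)\omega$), but $i_R\omega\ne 0$ in general precisely because $\omega$ is a nonzero twisted $1$-form of degree $d+2>0$ — indeed $i_R\omega$ recovers (up to the Euler relation) the homogeneous data of $\omega$, and it vanishes identically only if $\omega\equiv 0$. Hence $R\in\mathfrak{aut}(\F)\setminus\mathfrak{fix}(\F)$, so $\mathfrak{fix}(\F)\ne\mathfrak{aut}(\F)$, and Lemma \ref{L:fix neq aut} gives a polynomial integrating factor, namely $i_R\omega\in H^0(\mathbb P^3,\mathcal O_{\mathbb P^3}(d+2))$. (Strictly, one should check $i_R\omega$ is not a constant multiple of the "nothing" — it is a genuine degree-$d+2$ polynomial — and that $\mathcal F$ does have codimension-two singularities so that this $P$ is a bona fide integrating factor in the sense of the paper, which is exactly the standing hypothesis.) I expect the main obstacle to be the bookkeeping in the second paragraph: showing cleanly that "all characteristic monomials on a line" genuinely forces a two-dimensional space of annihilating linear vector fields, rather than just one (the one coming from $v_{(a,b,c)}$ itself). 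This is where one must use that a line in $\mathbb N^3$ is the intersection of two affine hyperplanes and that the contraction $i_v\omega$, being supported on that line, sees only the single "transverse" direction; I would phrase this via an explicit parametrization $t\mapsto (\alpha_0+t\alpha_1,\ \beta_0+t\beta_1,\ \gamma_0+t\gamma_1)$ of $\ell$ and reduce $i_v\omega=0$ to two scalar equations ("constant term" and "slope term") in the three unknowns $a_0,a_1,a_2$, whose solution space is $\ge 1$-dimensional — and then tensor with $R$ to upgrade to a vector field not annihilating $\omega$.
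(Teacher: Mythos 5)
Your overall strategy (produce elements of $\aut(\F)$ from the affine hyperplanes containing the line, then apply Lemma \ref{L:fix neq aut}) is the right one, but the execution has genuine errors, all stemming from the belief that you must first arrange $i_v\omega=0$. You do not: the computation in Lemma \ref{L:trivial}, namely $L_v\bigl(x^\alpha y^\beta z^\gamma \tfrac{dh}{h}\bigr)=(a\alpha+b\beta+c\gamma)\,x^\alpha y^\beta z^\gamma\tfrac{dh}{h}$, shows that $\chi(\omega)\subset\{a\alpha+b\beta+c\gamma=\mu\}$ already implies $L_v\omega=\mu\,\omega$, hence $v\in\aut(\F)$, with no contraction condition whatsoever. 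Chasing $i_v\omega=0$ instead leads you to the false claim in your second paragraph: writing $i_v\omega=\sum_{(i,j,k)}\bigl(a_xc^x_{ijk}+a_yc^y_{ijk}+a_zc^z_{ijk}\bigr)x^iy^jz^k$, the vanishing of each coefficient is one linear condition on $(a_x,a_y,a_z)$ \emph{per monomial}, and these conditions are determined by the coefficient vectors $(c^x_{ijk},c^y_{ijk},c^z_{ijk})$, not by the exponents $(i,j,k)$; the fact that the exponents lie on a line makes the conditions in no way proportional. (For instance $\omega=xyz(\alpha\frac{dx}{x}+\beta\frac{dy}{y}+\gamma\frac{dz}{z})+x^2y^2(\alpha'\frac{dx}{x}+\beta'\frac{dy}{y})$ has $\chi(\omega)$ on a line yet imposes two independent conditions.) So the promised two-dimensional space of annihilating diagonal fields does not exist in general --- and is not needed.

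The final step is also broken. The Euler field on $\mathbb C^4$ does satisfy $L_R\omega=(d+2)\omega$, but it also satisfies $i_R\omega=0$ by the very definition of a twisted $1$-form descending to $\mathbb P^3$ (it is the zero vector field on $\mathbb P^3$), so it cannot serve as an element of $\aut(\F)\setminus\fix(\F)$; the affine radial field on $\mathbb C^3$, on the other hand, satisfies $L_R\omega=\sum_j j\,\omega_j$, which is not proportional to $\omega$ unless $\omega$ is homogeneous, so it need not lie in $\aut(\F)$ at all. Your claim that $i_R\omega=0$ forces $\omega\equiv 0$ is false in either reading. The correct finish --- and the place where the hypothesis $d\ge 3$, which you never actually use, must enter --- is the paper's: the two non-parallel hyperplanes through the line give two linearly independent diagonal fields $v_1,v_2\in\aut(\F)$; if both lay in $\fix(\F)$, the tangent sheaf of $\F$ would contain $\mathcal O_{\mathbb P^3}^{\oplus 2}$, forcing $\deg\F\le 2$; since $d\ge 3$, one of them lies in $\aut(\F)\setminus\fix(\F)$, and Lemma \ref{L:fix neq aut} produces the polynomial integrating factor $i_{v_i}\omega$.
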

\begin{proof}
    Let $H_1, H_2 \subset \mathbb R^3$ be two distinct hyperplanes such that $\chi(\omega) \subset H_1 \cap H_2$. If we write $H_i = \{ a_i x + b_i y + c_i z = \mu_i\}$ and $v_i = a_i x \frac{\partial}{\partial x}  + b_i y \frac{\partial}{\partial y}+ c_i z \frac{\partial}{\partial z}$ then
    \[
        L_{v_i} \omega = \mu_i \omega
    \]
    for $i = 1 ,2$. Therefore, both $v_1$ and $v_2$ belongs to $\aut(\F)$. Notice that $v_1\wedge v_2 \neq 0$  as otherwise $H_1$ and $H_2$ would be parallel hyperplanes with empty intersection. If both $v_1$ and $v_2$ belong to $\fix(\F)$ then $\F$ would be a foliation of degree at most two. Since this is not the case by assumption,  $v_1$ or $v_2$ does not belong to $\fix(\F)$. We apply Lemma \ref{L:fix neq aut} to conclude.
    \end{proof}

\begin{lemma}\label{L:longlist}
    Let $\omega$ be a general element of $V_3(a,b,c;n)$ where $0\le a\le b \le c \neq 0$,  $\gcd(a,b,c)=1$, and $(a,b,c) \notin \{ (0,0,1),(1,1,1) \}$. If $\omega$, seen as a section of $H^0(\mathbb P^3, \Omega^1_{\mathbb P^3}(5))$, has singular set of codimension $2$ and does not
    admit a polynomial integrating factor then $(a,b,c;n)$ is equal to one of the following $34$ possibilities:
    \begin{align*}
        &(0,1,1;2), (0,1,1;3),  (0,1,2;3), (1,1,2;5), (1,2,2;7),  (1,2,3;6), (1,2,3;7), \\
        &(1,2,3;8), (1,2,3;9),  (1,2,4;7), (1,2,4;9), (1,2,5;7),  (1,2,5;11),(1,2,5;12), \\
        &(1,3,4;7), (1,3,4;10), (1,3,4;13),(1,3,5;8), (1,3,5;11), (1,3,7;10),(1,4,6;13), \\
        &(2,3,4;11),(2,3,4;13), (2,3,5;11),(2,3,5;14),(2,3,7;16), (2,4,5;14),(2,4,5;17), \\
        &(2,5,6;17), (3,4,5;13),(3,4,5;14),(3,4,5;18),(4,5,7;19), (4,6,7;25) \, .
    \end{align*}
    Moreover, for any $(a,b,c;n)$ in the above list, the general $\omega \in V_3(a,b,c;n)$ defines a degree $3$ foliation
    on $\mathbb P^3$ without a polynomial integrating factor.
\end{lemma}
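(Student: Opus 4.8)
The plan is to reduce the statement to a finite combinatorial enumeration and then to settle the remaining finitely many cases by direct computation. Throughout, write a general $\omega\in V_3(a,b,c;n)$ in the affine coordinates $(x_0:x_1:x_2:x_3)=(x:y:z:1)$, so that $\omega=\omega_1+\dots+\omega_5$ with $i_R\omega_5=0$ and $i_{v_{(a,b,c)}}\omega=0$. By Lemma \ref{L:trivial} every characteristic monomial of $\omega$ lies on the affine plane $H_n=\{(\alpha,\beta,\gamma):a\alpha+b\beta+c\gamma=n\}$, and for a general $\omega$ the set $\chi(\omega)$ equals the full set $\mathcal M(a,b,c;n)$ of those lattice points $(\alpha,\beta,\gamma)$ with $1\le\alpha+\beta+\gamma\le 5$ on $H_n$ that are actually attainable once one takes into account the linear relations imposed by $i_{v_{(a,b,c)}}\omega=0$ and by the vanishing of $i_R\omega_5$. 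The crucial input is Proposition \ref{P:pif prop}: if $\mathcal M(a,b,c;n)$ is contained in a line, then the general $\omega\in V_3(a,b,c;n)$ with codimension-two singular set admits a polynomial integrating factor. Hence, for the forward implication, we may assume that $\mathcal M(a,b,c;n)$ is \emph{not} contained in a line, i.e.\ that it affinely spans $H_n$.

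The next step is to establish finiteness. Every point of $\mathcal M(a,b,c;n)$ lies in the bounded region $\{\alpha,\beta,\gamma\ge 0,\ \alpha+\beta+\gamma\le 5\}$, so $H_n$ must meet this simplex in a genuinely two-dimensional polygon carrying at least three non-collinear lattice points. An elementary analysis of this condition, organized by the ordering and the coincidences among $a,b,c$ (the same stratification as in Remark \ref{R:dim normalizer}, and recalling $(a,b,c)\notin\{(0,0,1),(1,1,1)\}$ and $\gcd(a,b,c)=1$), bounds $c$, hence $a$ and $b$, and the admissible values of $n$; this leaves only finitely many $4$-tuples to inspect. For each of them one writes $\mathcal M(a,b,c;n)$ out explicitly and keeps it only when it is not contained in a line. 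This produces a finite list of candidates that contains the $34$ tuples in the statement, and possibly a few more.

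It then remains to prune the extra candidates and to prove the ``moreover'' clause. For a surviving candidate the general member $\omega$ of the (small, explicitly described) linear space $V_3(a,b,c;n)$ must still be tested for two properties: that its singular scheme has codimension $\ge 2$, so that $\omega$ genuinely defines a foliation of degree $3$; and that it admits no polynomial integrating factor. Note that collinearity of $\chi(\omega)$ is only sufficient, not necessary, for the existence of such a factor, so non-collinearity does not by itself rule one out, and this second test is genuine. The first property is checked by computing the singular locus of the general element of $V_3(a,b,c;n)$; the second by computing the rank of the map $\delta_\omega$ of Lemma \ref{L:pifclosed} and verifying that its kernel is trivial. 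Carrying this out for every candidate, by hand in the simplest cases and with the Maple routines in the ancillary files otherwise, discards exactly the extraneous tuples and confirms that the $34$ listed tuples are precisely those for which the general $\omega\in V_3(a,b,c;n)$ defines a degree-three foliation without a polynomial integrating factor; this proves both implications.

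The main obstacle is bookkeeping rather than conceptual depth. The combinatorial enumeration in the second step splits into many sub-cases according to which coordinate hyperplanes are invariant and to the coincidences among $a,b,c$, and a correct description of $\mathcal M(a,b,c;n)$ requires care: the constraints $i_{v_{(a,b,c)}}\omega=0$ and $i_R\omega_5=0$ can force a monomial that appears ``by itself'' in the weight-$n$ graded piece to be absent from every genuine element of $V_3(a,b,c;n)$, so one cannot simply read $\mathcal M$ off the list of weight-$n$ monomials. A secondary difficulty is that the final pruning really depends on the non-formal computations of singular loci and of $\ker\delta_\omega$, so Proposition \ref{P:pif prop} alone does not pin down the list.
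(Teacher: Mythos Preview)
Your proposal is correct and follows the same overall architecture as the paper: use Proposition~\ref{P:pif prop} to reduce to the case where $\chi(\omega)$ contains three non-collinear lattice points of $\Delta_3$, argue finiteness, then verify each surviving candidate by direct computation of the singular locus and of $\ker\delta_\omega$. The one notable difference is in how finiteness is obtained. You propose to bound $c$ (and then $a,b,n$) by a case analysis on the coincidences among $a,b,c$; this works but is not spelled out and takes some effort. The paper instead observes that $\Delta_3\cap\mathbb Z^3$ is a fixed finite set, so one can simply run over all triples of non-collinear points in it, compute the unique hyperplane through each triple, and record those whose equation has the required form. This makes finiteness immediate and turns the enumeration into a concrete algorithm with no preliminary bounding step. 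Your extra care about which monomials are genuinely attainable in $V_3(a,b,c;n)$ is sound but unnecessary at the enumeration stage: the paper enumerates the (possibly larger) set of hyperplanes through three non-collinear points of $\Delta_3\cap\mathbb Z^3$ and defers all filtering to the verification step.
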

\begin{proof}
    Let $\Delta_{d} \subset \mathbb R^3$ be the closed subset $\{(\alpha, \beta, \gamma) \in \mathbb R^3 \, \vert \,  \alpha+ \beta + \gamma \le d+2, \alpha\ge 0, \beta \ge 0, \gamma \ge 0 \}$.

    The first step of the proof consists of an enumeration of all affine hyperplanes in $\mathbb R^3$ defined
    by equations $ax + by + c z = n$ with integral positive coefficients satisfying $0\le a\le b \le c \neq 0$ and
    $\gcd(a,b,c)=1$  that intersect $\Delta_{3}\cap \mathbb Z^3$ in at least three non-aligned points. For that, it suffices to consider
    three non-aligned points in $\Delta_{3}\cap \mathbb Z^3$, determine the equation for the hyperplane containing them, and check if
    they satisfy the condition $0\le a\le b \le c \neq 0$ and $\gcd(a,b,c)=1$.

    Having the hyperplanes list at hand, the second step of the proof consists of verifying if the general element of the corresponding $V_3(a,b,c,n)$ defines a $1$-form in $H^0(\mathbb P^3, \Omega^1_{\mathbb P^3}(5))$ with codimension two singularities and without polynomial integrating factor.

    We carried out the enumeration first by hand and double-checked our results by implementing the  procedure above in Maple.
\end{proof}

There is a certain amount of redundancy in the list above. For any $d \in \mathbb N$, consider the involution
\begin{align*}
        \iota_d : \mathbb Z^4 & \longrightarrow \mathbb Z^4 \\
        (\alpha,\beta,\gamma;\delta) & \mapsto (\gamma - \beta ,\gamma - \alpha,\gamma;\gamma(d+2) - \delta  ) \, .
\end{align*}

\begin{lemma}
     If $0 \le a \le b \le c \neq 0$ are positive integers satisfying $\gcd(a,b,c)=1$ then, for any $n \in \mathbb N$, the varieties $\TM_d(a,b,c;n)$ and $\TM_d( \iota_d(a,b,c;n))$ coincide.
\end{lemma}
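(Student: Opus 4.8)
The claim is that the two weighted-projective foliation families $\TM_d(a,b,c;n)$ and $\TM_d(\iota_d(a,b,c;n))$ coincide as subvarieties of $\Fol[3]{d}$. The natural strategy is to exhibit a single automorphism of $\mathbb P^3$ that conjugates the $\mathbb C^*$-action with weights $(a,b,c)$ to the $\mathbb C^*$-action whose weights are those recorded in $\iota_d(a,b,c)$, and then to track what this conjugation does to the eigenvalue label $n$. Since $\TM_d(a,b,c;n)$ is by definition the closure of the $\Aut(\mathbb P^3)$-orbit of $\mathbb P(V_d(a,b,c;n))$ intersected with $\Open[3]{d}$, any such conjugating automorphism will identify the two closures provided it carries $V_d(a,b,c;n)$ bijectively onto $V_d(\iota_d(a,b,c;n))$.

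**The conjugating map.** The first concrete step is to recall that $v_{(a,b,c)} = a x_0 \partial_{x_0} + b x_1 \partial_{x_1} + c x_2 \partial_{x_2}$ is the generator chosen in the affine chart $x_3 = 1$; in homogeneous terms, $v_{(a,b,c)}$ and $v_{(a,b,c)} + t R$ induce the same vector field on $\mathbb P^3$ for any $t$, where $R$ is the radial field. I would first normalize by adding $-c\cdot R$ to obtain the equivalent homogeneous generator with weights $(a-c, b-c, c-c, -c) = (a-c, b-c, 0, -c)$ on $(x_0,x_1,x_2,x_3)$. Then apply the coordinate permutation swapping the roles of $x_0 \leftrightarrow x_2$ (so the zero weight moves to the last slot again) together with a relabelling that sorts the weights into nondecreasing order: this produces precisely the weight vector $(c-b, c-a, c)$ appearing in $\iota_d$, up to once more subtracting a multiple of $R$ to restore the convention that the last weight is the one we set to zero in the chart. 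The upshot is an explicit $\varphi \in \Aut(\mathbb P^3)$ (a composition of a coordinate permutation with rescalings) satisfying $\varphi_* v_{(a,b,c)} = v_{(c-b,c-a,c)} \bmod R$.

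**Tracking the eigenvalue.** The second step is the bookkeeping of the Lie-derivative eigenvalue. If $\omega \in V_d(a,b,c;n)$, then $L_{v_{(a,b,c)}}\omega = n\,\omega$ and $i_{v_{(a,b,c)}}\omega = 0$. Using Lemma \ref{L:trivial}, $\chi(\omega)$ lies in the hyperplane $\{a\alpha + b\beta + c\gamma = n\}$. Pulling back by $\varphi$ and reinterpreting in the new chart, each characteristic monomial $x^\alpha y^\beta z^\gamma \tfrac{dh}{h}$ gets sent (under the permutation and the chart change $x_3=1$) to a monomial whose new exponent triple $(\alpha', \beta', \gamma')$ satisfies the affine relation governed by the shifted weights; a direct substitution shows the new eigenvalue is $c(d+2) - n$, matching the last coordinate of $\iota_d(a,b,c;n)$. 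The cleanest way to see the shift $n \mapsto c(d+2) - n$ is to note that, in homogeneous coordinates of total degree $d+2$, replacing the generator $v$ by $v - cR$ changes the weighted eigenvalue of a homogeneous $1$-form of degree $d+2$ by exactly $-c(d+2)$, and the permutation of coordinates acts trivially on the eigenvalue; combining these gives the claimed transformation, so that $\varphi^*$ restricts to an isomorphism $\mathbb P(V_d(\iota_d(a,b,c;n))) \xrightarrow{\ \sim\ } \mathbb P(V_d(a,b,c;n))$. Finally one checks that $\varphi^*$ preserves the vanishing locus codimension, hence maps $\Open[3]{d}$ to itself, so passing to closures yields $\TM_d(a,b,c;n) = \TM_d(\iota_d(a,b,c;n))$.

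**Main obstacle.** The genuinely delicate point is not the existence of the permutation — that is forced — but verifying that the resulting weight vector, after one sorts $\{c-b, c-a, c\}$ into the order required by the standing convention $0 \le a' \le b' \le c'$ with $\gcd = 1$, indeed lands on $(c-b, c-a, c)$ rather than on some further reshuffling, and that $\gcd(c-b, c-a, c) = \gcd(a,b,c) = 1$ automatically. One must also confirm that the eigenvalue shift is insensitive to which homogeneous representative of the $\mathbb C^*$-action one picks, i.e. that the recipe is well defined on $\mathbb P H^0(\mathbb P^3, \Omega^1_{\mathbb P^3}(d+2))$ and not merely on a chart. Both of these are short linear-algebra checks, so I expect the entire proof to be a few lines: identify $\varphi$, compute $\varphi^* v$ and $\varphi^*\omega$, read off that the eigenvalue becomes $c(d+2)-n$, and invoke that $\TM_d$ is defined as an $\Aut(\mathbb P^3)$-orbit closure to conclude equality.
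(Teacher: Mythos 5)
Your proposal is correct and follows essentially the same route as the paper: the paper sets $w = cR - v_{(a,b,c)}$, computes $L_w\omega = (c(d+2)-n)\omega$, and applies the permutation $(x_0,x_1,x_2,x_3)\mapsto(x_1,x_0,x_3,x_2)$ to land in $V_d(c-b,c-a,c;c(d+2)-n)$. The only quibble is a sign slip in your intermediate step: working with $v - cR$ gives weights $(a-c,b-c,0,-c)$ and eigenvalue $n - c(d+2)$, so to match the convention $0\le a'\le b'\le c'$ and arrive at $c(d+2)-n$ you must take $cR - v$ (the negative), as the paper does.
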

\begin{proof}
    Let $\omega \in V_d(a,b,c;n)$,  $v=v_{(a,b,c)}$, and $w = cR - {v}$, where
    $R$ is the radial vector field on $\mathbb C^4$.
    Since $\omega$ is a homogeneous $1$-form of degree $d+2$ such that $L_{ v} \omega = n \omega$
    we see that
    \[
        L_{w } \omega = L_{cR -{v}} \omega = c L_R \omega - L_{{v}} \omega = c(d+2) \omega - n \omega \, .
   \]
   If $\varphi$ is the linear map $\varphi(x_0,x_1,x_2,x_3)=(x_1,x_0,x_3,x_2)$ then
   $\varphi_*  w=v_{(c-b,c-a,c)}$ and $\varphi^* \omega \in V_d(i_d(a,b,c;n))$.
    The equality between $\TM_d(a,b,c;n)$ and $\TM_d( \iota_d(a,b,c;n))$ follows.
\end{proof}

\subsection{Interpretation}\label{SS:interpretation}
Fix integers $a,b,c$  satisfying $1\le a \le b \le c$ and $\gcd(a,b,c) =1$.
Set $\mathbb P = \mathbb P(a,b,c)$ as the weighted projective plane of type $(a,b,c)$ in the
terminology of \cite{MR704986}. The weighted projective plane $\mathbb P$ is a singular
surface with at worst quotient singularities.

If we denote by $\Omega^{[1]}_{\mathbb P}$ the
sheaf of reflexive differentials on $\mathbb P$ ($\Omega^{[1]}_{\mathbb P} = \overline{\Omega^1_{\mathbb P}}$
in the notation of \cite[Section 2.1]{MR704986}) and by $\mathcal O_{\mathbb P}(n)$ the sheaf
of $\mathcal O_{\mathbb P}$-modules associated to the
module of quasi-homogeneous polynomials of degree $n$ (\cite[Section 1.4]{MR704986})
then the vector space $V_3(a,b,c;n)$ can be naturally identified with a subspace of
\[
    H^0(\mathbb P; \Omega^{[1]}_{\mathbb P}\otimes \mathcal O_{\mathbb P}(n)) = H^0(\mathbb P; \Omega^{[1]}_{\mathbb P}(n))   \,  .
\]
Therefore, a $1$-form   $\omega \in V_d(a,b,c;n)$ with zero set of codimension at least two defines a foliation  $\mathcal G$ on
$\mathbb P$ with normal sheaf $N_{\mathcal G} \simeq \mathcal O_{\mathbb P}(n)$ and tangent sheaf
$T_{\mathcal G} \simeq \mathcal O_{\mathbb P}((a+b+c)-n)$
and, at the same time, a codimension one foliation $\mathcal F$ on $\mathbb P^3$  with normal bundle $N_{\mathcal F} \simeq \mathcal O_{\mathbb P^3}(d+2-\epsilon)$, where $\epsilon$ is the vanishing order
along the hyperplane at infinity of the homogenization of $\omega$ as a homogeneous $1$-form on $\mathbb C^4$ of degree $d+2$ (i.e.
with homogenous coefficients of degree $d+1$). The foliation $\mathcal F$ on $\mathbb P^3$ coincides with the pull-back of the foliation $\mathcal G$ on $\mathbb P$ under the rational map
\begin{align*}
    \pi : \mathbb P^3 & \dashrightarrow \mathbb P \\
    (x_0:x_1:x_2:x_3) & \mapsto (x_0\cdot x_3^{a-1}: x_1 \cdot x_3^{b-1} : x_2\cdot x_3^{c-1}) \, .
\end{align*}

The result below is a specialization and adaptation to our particular context, of some general results scattered in the literature.

\begin{prop}\label{P:description}
    Notations as above. Let $\omega \in H^0(\mathbb P; \Omega^{[1]}_{\mathbb P}(n))$ be a non-zero $1$-form
    with codimension two singular set and let $\G$ be the foliation on $\mathbb P= \mathbb P(a,b,c)$ defined by $\omega$.
    The following assertions hold true.
    \begin{enumerate}
        \item\label{I:Miyaoka} If $n < a+b+c$ then $\mathcal G$ admits a rational first integral and its leaves are rational curves.
        \item\label{I:KF=0} If $n = a+b+c$ then $\mathcal G$ is defined by a closed rational $1$-form.
        \item\label{I:Darboux} If $n- (a+b+c)$ is strictly positive and does not belong to the semi-group generated by $a,b,c$ then one, and only one, of the following assertions is valid.
        \begin{enumerate}
            \item The foliation $\G$ has no algebraic leaves.
            \item The foliation $\G$ has exactly one algebraic leaf and is a virtually transversely additive foliation; or
            \item There exists an algebraic first integral $f:\mathbb P \dashrightarrow \mathbb P^1$ for $\G$ such that every fiber of $f$ has irreducible (but not necessarily reduced) support.
        \end{enumerate}
    \end{enumerate}
\end{prop}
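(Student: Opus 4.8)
The plan is to exploit the fact that a foliation $\G$ on the weighted projective plane $\mathbb{P}=\mathbb{P}(a,b,c)$ has tangent sheaf $T_\G\simeq\mathcal{O}_{\mathbb P}((a+b+c)-n)$ and normal sheaf $N_\G\simeq\mathcal{O}_{\mathbb P}(n)$, so the sign and size of $n-(a+b+c)$ measures the positivity of $K_\G$. For assertion \ref{I:Miyaoka}, when $n<a+b+c$ the tangent sheaf is the sheaf associated to a positive degree, hence $T_\G$ is (in the appropriate sense) an ample $\mathbb Q$-line bundle; I would invoke Miyaoka's semipositivity theorem (or rather its contrapositive, in the form used by Bogomolov--McQuillan / Brunella for foliated surfaces, adapted to the orbifold setting via a global smooth cover of $\mathbb P$) to conclude that the leaves are rational curves and that $\G$ is a fibration by rational curves, hence has a rational first integral. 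The orbifold subtlety — that $\mathbb P$ has quotient singularities — is handled by passing to a finite Galois cover which is smooth (or by working on a resolution and controlling the discrepancies), where the classical surface results apply verbatim.

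For assertion \ref{I:KF=0}, when $n=a+b+c$ the canonical bundle $K_\G = N_\G^{\vee}\otimes K_{\mathbb P}$ computes to $\mathcal{O}_{\mathbb P}(-(a+b+c))\otimes\mathcal{O}_{\mathbb P}(a+b+c)=\mathcal O_{\mathbb P}$, i.e. $K_\G$ is trivial. A foliation on a surface with numerically trivial canonical bundle and with $N_\G$ big (which holds here since $N_\G=\mathcal O_{\mathbb P}(a+b+c)$ is ample) is, by the classification of such foliations — Brunella's work, together with the description on surfaces of Kodaira dimension governed by $K_\G$ — either a pullback of an isotrivial fibration or carries a global closed rational $1$-form defining it; on a rational surface such as $\mathbb P$ the isotrivial-fibration case also produces a closed rational $1$-form (a pencil of type $\frac{df_1}{f_1}+\lambda\frac{df_2}{f_2}$ or a Riccati-type reduction), so in all cases $\G$ is defined by a closed rational $1$-form. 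I would cite the relevant statement from \cite{LorayPereiraTouzet13} or \cite{CousinPereira} for the precise dictionary.

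For assertion \ref{I:Darboux}, the hypothesis that $e:=n-(a+b+c)>0$ does not lie in the numerical semigroup $\langle a,b,c\rangle$ is exactly what rules out the existence of a quasi-homogeneous polynomial of degree $e$, hence of an $\G$-invariant curve arising from such a polynomial dividing a cofactor; more precisely, any algebraic leaf of $\G$ would be cut out by a quasi-homogeneous $f$ with $df\wedge\omega = (\text{cofactor})\cdot f\cdot(\dots)$, and degree bookkeeping forces $\deg f$ and the cofactor degree into the semigroup, which constrains the configuration of invariant curves severely. The trichotomy then follows: if $\G$ has no invariant algebraic curve we are in case (a); if it has invariant algebraic curves, a Darboux/integrating-factor analysis (in the spirit of \cite{CerveauLins96} and \cite[Lemma 4.3]{LorayPereiraTouzet13}) shows the possible residue data is so rigid that either there is exactly one invariant curve and the foliation is transversely additive after passing to a finite cover (case (b)), or the invariant curves assemble into the fibers of an algebraic pencil, and the semigroup condition forces each fiber to be supported on a single irreducible (though possibly non-reduced) curve, giving case (c). The three cases are mutually exclusive by counting invariant curves. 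The main obstacle I anticipate is assertion \ref{I:Darboux}: making rigorous the passage from the numerical semigroup obstruction to the structural trichotomy requires a careful case analysis of the polar divisor of the would-be integrating factor and of the multiple-fiber structure of a putative pencil on a weighted projective plane — exactly the type of bookkeeping that is "straightforward but lengthy," and where the orbifold nature of $\mathbb P$ and the non-reducedness of fibers must be tracked with care.
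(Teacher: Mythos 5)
Your treatment of Item (\ref{I:Miyaoka}) matches the paper (Miyaoka semipositivity applied to $\Omega^1_{\G}\cdot H<0$). For Item (\ref{I:KF=0}) you take a heavier and less precise route than necessary: rather than invoking the classification of foliations with numerically trivial canonical bundle (whose statement you quote only approximately, and which would need to be justified on the orbifold $\mathbb P(a,b,c)$), the paper simply observes that $n=a+b+c$ makes $T_{\G}$ itself trivial, so $\G$ is defined by a \emph{global holomorphic vector field} $w$; then either the orbits of $w$ are algebraic (and the differential of the resulting first integral is the desired closed form) or $\fix(\G)\neq\aut(\G)$ and the analogue of Lemma \ref{L:fix neq aut} produces a polynomial integrating factor. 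Your route might be made to work, but it is not the same argument and carries a real citation burden.

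The genuine gap is in Item (\ref{I:Darboux}), which you yourself flag as the anticipated obstacle: your sketch never identifies the mechanism that turns the semigroup hypothesis into the trichotomy. The key point is that if $C=\{P=0\}$ is any $\G$-invariant curve and $w$ is the quasi-homogeneous vector field with $d\omega=i_w(dx\wedge dy\wedge dz)$ (which has quasi-homogeneous degree $\kappa=n-(a+b+c)$ and defines $\G$), then the cofactor $dP(w)/P$ is a quasi-homogeneous \emph{polynomial of degree exactly $\kappa$} --- not merely of degree constrained to lie in the semigroup, as you suggest. Since $\kappa>0$ and $\kappa\notin\langle a,b,c\rangle$, one has $h^0(\mathbb P,\mathcal O_{\mathbb P}(\kappa))=0$, so this cofactor vanishes identically: $dP(w)=0$. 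Everything else is a formal consequence of this vanishing: with one invariant curve, $d\omega$ and $\frac{dP}{P}\wedge\omega$ are both annihilated by $i_w$, hence proportional, and contracting with $v$ fixes the constant, giving $d\omega=\frac{n}{\delta}\frac{dP}{P}\wedge\omega$ (case (b)); with two invariant curves $\{P=0\},\{Q=0\}$ of degrees $p,q$, the form $\eta=q\frac{dP}{P}-p\frac{dQ}{Q}$ kills both $v$ and $w$, so $P^q/Q^p$ is a rational first integral; and applying this last observation to two irreducible components of a single fiber of the Stein factorization produces a first integral taking the values $0$ and $\infty$ on that fiber, contradicting irreducibility of the general fiber --- whence every fiber has irreducible support (case (c)). Without the vanishing of the cofactor your ``residue data is so rigid'' step has no content, and the trichotomy is asserted rather than derived; you should supply this computation to close the argument.
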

\begin{proof}
    The sheaf $\Omega^1_{\mathcal G}$ of $1$-forms along the leaves of $\G$, i.e. the dual of $T_{\G}$,
    is isomorphic to $\mathcal O_{\mathbb P}(n-(a+b+c))$.

    The assumption of Item (\ref{I:Miyaoka}) is equivalent to $\Omega^1_{\mathcal G}\cdot H <0$ for any
    ample divisor. Consequently, Item (\ref{I:Miyaoka}) follows from Miyaoka's semi-positivity theorem.
    For a concrete proof in the case $a,b,c$ are two-by-two distinct that explicitly constructs
    the first integral see \cite[Proposition 2.2.1]{Lizarbe14}.

    The assumption of Item (\ref{I:KF=0}) is equivalent to the triviality of $T_{\mathcal G}$. Therefore,
    in this case $\G$ is defined by a global holomorphic vector field $w$ on $\mathbb P$. If the orbits of $w$ are algebraic
    then $\G$ admits a rational first integral and the differential of the first integral is a closed rational $1$-form
    defining $\G$. If instead the orbits of $w$ are not algebraic then $\fix(\G) \neq \aut(\G)$ (here we are considering
    natural analogues of the Lie algebras defined in Subsection  \ref{SS:inf vs pif}) and we can apply (the natural analogue
    of) Lemma \ref{L:fix neq aut} to produce a polynomial integrating factor for $\omega$ from a vector field
    $\xi \in \aut(\G) - \fix(\G)$. Item (\ref{I:KF=0}) follows.

    The assumption of Item (\ref{I:Darboux}) is equivalent to $h^0(\mathbb P,\mathcal O_{\mathbb P}(n-(a+b+c)))=0$. Assume $\G$ has one invariant curve $C$ and let $P \in \mathbb C[x,y,z]$ be the quasi-homogeneous polynomial
    of quasi-homogeneous degree $\delta$ defining it. Let also $w$ be the unique quasi-homogeneous vector field on $\mathbb C^3$
    such that
    \[
        d \omega = i_w (dx \wedge dy \wedge dz) \, .
    \]
    The vector field $w$ has quasi-homogeneous degree $\kappa = n-(a+b+c)$ and defines the foliation $\G$. In particular, $w$ has singularities of codimension at least two.
    The contraction of $w$ with $\frac{dP}{P}$ is a polynomial, because $C$ is $\G$-invariant, and  has quasi-homogeneous degree $\kappa$. Our assumption implies that $dP(w)=0$. Consequently, $d \omega$ and $\frac{dP}{P}\wedge \omega$ are proportional $2$-forms of the same quasi-homogeneous degree. Since $i_v d\omega = n \omega$
 and $i_v(\frac{dP}{P}\wedge \omega) = \delta \omega$, for $v = a x \frac{\partial}{\partial x} + b y \frac{\partial}{\partial y}
+ c z \frac{\partial}{\partial z}$,  we deduce that
    \[
        d \omega = \left( \frac{n}{\delta} \frac{dP}{P} \right) \wedge \omega
    \]
    Therefore $\G$ is a virtually transversely additive foliation whenever $\G$ admits an invariant curve.

    Assume now that $\G$ has two distinct invariant curves. If  irreducible quasi-homogeneous polynomials
    $P$ and $Q$ of quasi-homogeneous degrees $p$ and $q$ cut them out, then the logarithmic $1$-form
    \[
        \eta = q \frac{dP}{P} - p \frac{dQ}{Q}
    \]
    is such that $\eta(w) = 0$ (because $\eta(w)$  is a quasi-homogeneous polynomial of degree $\kappa$) and $\eta(v)=0$.
    It follows that $P^q/Q^p$ is a rational first integral for $\G$.

    Let $f: \mathbb P \dashrightarrow \mathbb P^1$ be the Stein factorization of any rational first integral for $\G$. If one of the fibers of $f$ does not have irreducible support,
       we can apply the argument above to produce a first integral non-constant along a general fiber of $f$. Indeed, if $P$ and $Q$ are now irreducible quasi-homogeneous polynomials cutting out two distinct irreducible components of the same fiber of $f$ then $h = P^q/Q^p$ is a rational first integral for $\G$ that assumes two distinct values, $0$ and $\infty$, on the fiber
    of $f$ containing $\{P=0\}$ and $\{Q=0\}$. By continuity, $h$ is not constant on the general fiber of $f$.
    This is impossible because the general fiber of $f$ is irreducible and any rational first integral must be constant along it. Item (\ref{I:Darboux}) follows.
\end{proof}

\begin{remark}
    The  proof of Item (\ref{I:Darboux}) of Proposition \ref{P:description} can be phrased in more intrinsic terms, along the lines of
    the proof of \cite[Theorem 1]{MR4150930}. Let $U \subset \mathbb P$ be the smooth locus. Since $C$ is $\G$-invariant, the logarithmic differential  $-\frac{dP}{P}$ defines a flat logarithmic connection with trivial monodromy on  $\mathcal O_{\mathbb P}(\delta)_{|U}$ that restricts to a holomorphic $\G$-partial connection on the line-bundle $\mathcal O_{\mathbb P}(\delta)_{|U}$. Likewise, $-\frac{n}{\delta} \frac{dP}{P}$ defines a flat logarithmic connection $\nabla^{C}$ with finite monodromy on ${N_{\G}}_{|U}$ that restricts to a holomorphic $\G$-partial connection. If $\nabla$ is Bott's partial connection for $\G$ then $\nabla - \nabla^{C}_{|T_{\G}}$ defines a holomorphic section $\sigma$ of ${\Omega^1_{\G}}_{|U}$. Since $\mathbb P$ is normal and $\mathbb P - U$ has codimension two, $\sigma$ extends to a section of $\Omega^1_{\G}$. By assumption, $\sigma=0$. This shows that Bott's partial connection extends to a flat logarithmic connection with finite monodromy. Consequently $\G$ is virtually transversely additive.
\end{remark}

\subsection{Foliations with split tangent sheaf and  finitely many non-Kupka singularities}
The first irreducible components of the form $\TM_d(a,b,c;n) \subset \Fol[3]{d}$ were presented in \cite{CaCeGiLi}, where
they assumed $1\le a < b < c$ and that the general element of $\TM_d(a,b,c;n)$ has split tangent sheaf and finitely many non-Kupka singularities. Recall that a singularity $p$ of a foliation is a Kupka singularity if $d \omega(p) \neq 0$ for
any local $1$-form $\omega$ with codimension two singularities defining the foliation in a neighborhood of $p$. In the literature, irreducible components of the space of foliations having general elements possessing  the above properties are also called generalized Kupka components.

\begin{prop}\label{P:codim 3}
    Let $d\geq 3$ and $0 \le a \le b \le c \neq 0$ be integers satisfying $\gcd(a,b,c) = 1$ and $(a,b,c)\neq (1,1,1)$. If the general $ \omega \in V_d(a,b,c;n)$ defines a degree $d$ foliation with tangent sheaf isomorphic to $\mathcal O_{\mathbb P^3} \oplus \mathcal O_{\mathbb P^3}(2-d)$
   and with finitely many non-Kupka singularities then $\TM_d(a,b,c;n)$ is an irreducible component of $\Fol{d}$.  	
\end{prop}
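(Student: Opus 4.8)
The plan is to show that $\TM_d(a,b,c;n)$ contains a neighborhood, in the analytic topology of $\Fol{d}$, of the foliation $\F_0$ defined by a general element $\omega_0\in V_d(a,b,c;n)$. Since $\TM_d(a,b,c;n)$ is irreducible --- it is a non-empty open subset of $\overline{\Image\Psi}$, and $\overline{\Image\Psi}$ is irreducible, being the closure of the image of a rational map out of the irreducible variety $\Aut(\mathbb P^3)\times\mathbb P(V_d(a,b,c;n))$ --- and closed in $\Fol{d}$, and since a closed irreducible subvariety of $\Fol{d}$ that contains a Euclidean neighborhood of one of its points is automatically an irreducible component of $\Fol{d}$, it suffices to prove that every degree $d$ foliation $\F$ on $\mathbb P^3$ sufficiently close to $\F_0$ belongs to $\TM_d(a,b,c;n)$.

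The first step is the stability of the defining conditions. By hypothesis $\F_0$ is a generalized Kupka foliation: $T_{\F_0}\cong\mathcal O_{\mathbb P^3}\oplus\mathcal O_{\mathbb P^3}(2-d)$ is locally free and $\F_0$ has finitely many non-Kupka singularities. Following the arguments of \cite{CaCeGiLi} and the persistence results for Kupka foliations used there, each of these is an open condition on $\Fol{d}$: local freeness of $T_\F$ is open; along the Kupka curve the foliation has a rigid local product structure, so the relevant sheaves form flat families near $\F_0$, and since the first cohomology of the endomorphism sheaf of $\mathcal O_{\mathbb P^3}\oplus\mathcal O_{\mathbb P^3}(2-d)$ vanishes (every line bundle on $\mathbb P^3$ has vanishing $H^1$), a nearby locally free $T_\F$ is again isomorphic to $\mathcal O_{\mathbb P^3}\oplus\mathcal O_{\mathbb P^3}(2-d)$; finally the non-Kupka locus stays finite. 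Consequently, for $\F$ close to $\F_0$ there is a global vector field $v_\F\in H^0(\mathbb P^3,T_{\mathbb P^3})$ tangent to $\F$ --- the image of the distinguished summand $\mathcal O_{\mathbb P^3}\hookrightarrow T_\F$, equivalently the generator of $\mathfrak{fix}(\F)$ --- depending continuously on $\F$, with $v_{\F_0}$ a non-zero multiple of $v_{(a,b,c)}$.

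The second step recognizes $\F$ inside $\TM_d(a,b,c;n)$. If $\omega$ is a $1$-form with codimension two zeros defining $\F$, then $i_{v_\F}\omega=0$, and contracting the integrability relation $\omega\wedge d\omega=0$ with $v_\F$ gives $\omega\wedge i_{v_\F}d\omega=0$; hence $L_{v_\F}\omega=i_{v_\F}d\omega$ is a multiple of $\omega$ by a degree zero homogeneous function, i.e.\ a constant, so $L_{v_\F}\omega=n_\F\,\omega$. As $n_\F$ is an eigenvalue of $L_{v_\F}$ on a fixed finite dimensional space and $n_{\F_0}=n$, we get $n_\F=n$ for $\F$ near $\F_0$. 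It then remains to show that $v_\F$ is conjugate, by some $\varphi\in\Aut(\mathbb P^3)$, to $v_{(a,b,c)}$: granting this, after replacing $\F$ by $(\varphi^{-1})^*\F$ we may assume $v_\F=v_{(a,b,c)}$, and then $\omega\in V_d(a,b,c;n)$, so $\F\in\TM_d(a,b,c;n)$. The conjugacy is again the content of the persistence argument of \cite{CaCeGiLi}: $v_\F$ is a small deformation of the semisimple vector field $v_{(a,b,c)}$ whose fixed locus, together with the spectrum of its linear part transverse to that locus, are analytic invariants of $\F$ that remain constant along the family --- being, up to sign, Camacho--Sad / Baum--Bott type indices of $\F$ relative to the $v_\F$-invariant subvarieties --- and these data determine a semisimple vector field on $\mathbb P^3$ up to conjugation.

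The main obstacle, and the only point that departs from \cite{CaCeGiLi}, is that the latter assumes $1\le a<b<c$, in which case $v_{(a,b,c)}$ is a regular semisimple vector field with four isolated fixed points. Here we must allow equalities among $a,b,c$ and the value $a=0$ (excluding only $(0,0,1)$ and $(1,1,1)$, cf.\ Remark \ref{R:obvious}); then $v_{(a,b,c)}$ is non-regular and its fixed locus is a union of points and lines, which is reflected in the larger values of $\dim\mathfrak g_{(a,b,c)}$ recorded in Remark \ref{R:dim normalizer}. The conjugacy step must therefore be revisited over these positive dimensional invariant loci, checking that the local analytic type of $v_\F$ along them cannot change; by contrast, the identity $L_{v_\F}\omega=n\,\omega$ and the stability of the generalized Kupka package are insensitive to the non-regularity of $v_{(a,b,c)}$ and carry over verbatim.
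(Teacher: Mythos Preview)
Your overall strategy --- stability of the split tangent sheaf, extraction of a global vector field $v_\F\in\mathfrak{fix}(\F)$, and then conjugacy of $v_\F$ to $v_{(a,b,c)}$ --- matches the paper's. The stability step is fine (the paper invokes \cite[Corollary 1]{CukiermanPereira} rather than \cite{CaCeGiLi}, but either reference does the job). The genuine gap is in your conjugacy step, and you essentially flag it yourself in the last paragraph.

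The problem is that ``the eigenvalues of $v_\F$ are determined by Camacho--Sad/Baum--Bott indices, and these determine a semisimple vector field up to conjugation'' does not prove that $v_\F$ \emph{is} semisimple. When $(a,b,c)$ has repeated entries or $a=0$, a small deformation of $v_{(a,b,c)}$ with the same spectrum can acquire a nilpotent part; knowing the indices along the fixed loci of $v_\F$ does not by itself exclude this. Your closing remark that the argument ``must therefore be revisited over these positive dimensional invariant loci'' is an honest acknowledgment that the proof is incomplete precisely at the point where the proposition goes beyond \cite{CaCeGiLi}.

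The paper closes this gap with a different, cleaner idea that works uniformly and avoids any local index computation. It first proves that $\mathfrak{fix}(\G)=\mathfrak{aut}(\G)$ for every nearby $\G$: otherwise Lemma \ref{L:fix neq aut} would produce a polynomial integrating factor, Proposition \ref{P:belong to log} would place $\G$ in a logarithmic component, and \cite[Theorem 3]{CuSoVa} would then force $\G$ to have an isolated singularity, contradicting the locally free (split) tangent sheaf. Since $\mathfrak{aut}(\G)$ is the Lie algebra of the linear algebraic group $\Aut(\G)$ and equals $\mathbb C\cdot v_\F$ (here $d\ge 3$ ensures $\dim\mathfrak{fix}(\G)=1$), the one-parameter group generated by $v_\F$ is algebraic, hence its orbits are algebraic. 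Now the conjugacy is immediate: one-dimensional algebraic subgroups of $\Aut(\mathbb P^3)$ with Lie algebra close to $\mathbb C\cdot v_{(a,b,c)}$ form a discrete family up to conjugation, so $\mathbb C\cdot v_\F$ is conjugate to $\mathbb C\cdot v_{(a,b,c)}$. This argument is insensitive to whether $(a,b,c)$ is regular, and it is what you are missing.
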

\begin{proof} Let $\F$ be the foliation of degree $d$ defined by $\omega$.
   According to \cite[Corollary 1]{CukiermanPereira} any foliation $\G$
   sufficiently close to $\F$ has tangent sheaf isomorphic to $\TF$.

   We claim that $\mathfrak{fix}(\G) = \mathfrak{aut}(\G)$. Aiming at a contradiction, assume this is not the case.
   Lemma \ref{L:fix neq aut} implies $\G$ admits a polynomial integrating factor. Proposition \ref{P:belong to log}
   implies that $\G$ belongs to one of the logarithmic components. On the other hand, \cite[Theorem 3]{CuSoVa}
   guarantees that the generic element of the logarithmic foliation of degree greater than or equal to three on $\Proj^3$ has at least one isolated
   singularity and thus its tangent sheaf is not split, giving the sought contradiction.

  The trivial factor $\Ol_{\Proj^3}$ corresponds to a non-zero vector field $v \in \mathfrak{fix}(\G)$ with singularities
  of codimension at least two.
   Moreover, $\deg \G \geq 3$ implies that $\mathfrak{fix}(\G)$ is equal to $\mathbb C\cdot v$ and
  $\mathfrak{fix}(\G) = \mathfrak{aut}(\G)$. Thus the orbits of $v$ are algebraic. Since $\mathbb C v$ is a small deformation of $\mathbb Cv_{(a,b,c)}$ with algebraic orbits, the two Lie algebras must be conjugated. The proposition follows.
\end{proof}

\begin{cor}
    Let $d\geq 3$ and $1\le a < b < c$ be integers satisfying $\gcd(a,b,c)=1$. If the general $ \omega \in V_d(a,b,c;n)$ defines a degree $d$ foliation with finitely many non-Kupka singularities then $\TM_d(a,b,c;n)$ is an irreducible component of $\Fol{d}$.
\end{cor}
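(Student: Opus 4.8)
The plan is to deduce this corollary directly from Proposition \ref{P:codim 3}. Since $1\le a < b < c$, we are in case (1) of Remark \ref{R:dim normalizer}, and in particular $(a,b,c) \notin \{(0,0,1),(1,1,1)\}$, so Proposition \ref{P:codim 3} applies provided we can upgrade the hypothesis ``finitely many non-Kupka singularities'' to ``split tangent sheaf \emph{and} finitely many non-Kupka singularities''. So the only real content is: for a degree $d\ge 3$ foliation $\F$ on $\mathbb P^3$ defined by $\omega \in V_d(a,b,c;n)$ with $1\le a<b<c$ and finitely many non-Kupka singularities, the tangent sheaf $T_\F$ is automatically isomorphic to $\mathcal O_{\mathbb P^3}\oplus \mathcal O_{\mathbb P^3}(2-d)$.

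The key step is the following. By Lemma \ref{L:trivial} (and the discussion in Subsection \ref{SS:interpretation}), the radial vector field $v_{(a,b,c)}$ lies in $\mathfrak{fix}(\F)$, and because $\deg \F = d \ge 3$ this vector field spans $\mathfrak{fix}(\F)$ and has singular locus of codimension at least two (here one uses $(a,b,c)\neq(1,1,1)$ and $a<b<c$ to rule out the degenerate weight vectors of Remark \ref{R:obvious}, which have codimension one zeros). Thus $T_\F$ contains a trivial subsheaf $\mathcal O_{\mathbb P^3}\cdot v_{(a,b,c)}$ with torsion-free, indeed locally free, quotient away from a codimension $\ge 2$ set. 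Since $T_\F$ is a rank-two reflexive sheaf on $\mathbb P^3$ with $\det T_\F = T_{\mathbb P^3}\otimes N_\F^{-1}$ having the appropriate Chern class $c_1(T_\F) = 2-d$, the inclusion $\mathcal O_{\mathbb P^3}\hookrightarrow T_\F$ together with the Kupka hypothesis forces the quotient to be $\mathcal O_{\mathbb P^3}(2-d)$: the assumption that the non-Kupka locus is finite is exactly what guarantees that $T_\F$ has no singularities in codimension two, hence $T_\F$ is a direct sum of line bundles by a standard splitting argument (the extension $0\to \mathcal O_{\mathbb P^3}\to T_\F \to \mathcal O_{\mathbb P^3}(2-d)\to 0$ is classified by $\mathrm{Ext}^1(\mathcal O_{\mathbb P^3}(2-d),\mathcal O_{\mathbb P^3}) = H^1(\mathbb P^3,\mathcal O_{\mathbb P^3}(d-2)) = 0$).

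With the splitting $T_\F \simeq \mathcal O_{\mathbb P^3}\oplus \mathcal O_{\mathbb P^3}(2-d)$ established for the general $\omega \in V_d(a,b,c;n)$, the hypotheses of Proposition \ref{P:codim 3} are satisfied, and that proposition yields immediately that $\TM_d(a,b,c;n)$ is an irreducible component of $\Fol{d}$.

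I expect the main obstacle to be the justification that finiteness of the non-Kupka locus implies $T_\F$ has no codimension-two singularities and is therefore locally free; this is where one must be careful about the relationship between the singular scheme $\sing(\F)$, the Kupka phenomenon, and the local freeness of $T_\F$, and may require invoking the local normal form of a Kupka singularity (where $\F$ is locally a product and $T_\F$ is visibly locally free) together with the fact that $T_\F$ is reflexive, hence locally free outside a set of codimension $\ge 3$, so that the only possible obstruction to local freeness sits along the non-Kupka curve — which by hypothesis is empty. Once this local analysis is in place, the cohomology vanishing and the reduction to Proposition \ref{P:codim 3} are routine.
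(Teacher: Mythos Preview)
Your overall strategy—reduce to Proposition \ref{P:codim 3} by establishing that $T_\F \simeq \mathcal O_{\mathbb P^3} \oplus \mathcal O_{\mathbb P^3}(2-d)$—is exactly what the paper does. The difference, and the genuine gap in your argument, lies in how the splitting is obtained.

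The implication you flag as the main obstacle is indeed not valid as stated. On a smooth threefold every reflexive sheaf is already locally free outside a set of codimension $\ge 3$, so your Kupka analysis adds nothing: it only re-proves that $T_\F$ is locally free away from a finite set, which was automatic. This does not force $T_\F$ to be locally free, and in fact a foliation with an isolated singularity (necessarily non-Kupka, and permitted by the hypothesis ``finitely many non-Kupka singularities'') has $T_\F$ not locally free there—compare the discussion of $\TM_3(1,1,2;5)$ in Lemma \ref{L:extraKupka}. Even granting local freeness of $T_\F$, the quotient $T_\F/(\mathcal O_{\mathbb P^3}\cdot v_{(a,b,c)})$ would be $\mathcal I_Z(2-d)$, with $Z$ the zero scheme of $v_{(a,b,c)}$ as a section of $T_\F$, rather than $\mathcal O_{\mathbb P^3}(2-d)$; your $\mathrm{Ext}^1$ computation does not apply to that quotient.

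The paper's route is different and does not use the non-Kupka hypothesis for this step at all. It observes that $1\le a<b<c$ forces $v_{(a,b,c)}$ to have \emph{isolated} zeros on $\mathbb P^3$ (the four coordinate points), not merely zeros of codimension $\ge 2$, and then invokes \cite[Lemma 4.2]{LorayPereiraTouzet13} to conclude the splitting directly. The mechanism behind that lemma is visible in the proof of Lemma \ref{L:extraKupka} here: on each affine chart one writes $\omega = i_v i_w(dx\wedge dy\wedge dz)$, and the distinctness of the weights guarantees the relevant eigenvalue is nonzero on every chart, so $v$ and $w$ freely generate $\ker\omega$ locally and the cokernel of $v$ is itself locally free. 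The non-Kupka hypothesis is consumed only later, inside Proposition \ref{P:codim 3}.
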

\begin{proof}
    The assumption on $a,b,c$ implies that $v_{(a,b,c)}$ has isolated singularities. Therefore \cite[Lemma 4.2]{LorayPereiraTouzet13}
   implies that the tangent sheaf of $\F$ splits as the direct sum $\Ol_{\Proj^3}\oplus    \Ol_{\Proj^3}(2-d)$ and the
   result follows from Proposition \ref{P:codim 3}.
\end{proof}

The theorem below presents a complete characterization of the irreducible components of the form $\TM_d(a,b,c;n) \subset \Fol[3]{d}$ with $1\le a < b < c$ and finitely many non-Kupka singularities. It is a rephrasing of \cite[Theorem B.1]{Constant} using the notation of the present work.

\begin{thm}\label{T:Raphael}
    Let $d \geq 3$ and $1\le a<b<c$ be integers satisfying $\gcd(a,b,c)=1$. The general element of $V_d(a,b,c;n)$ defines a foliation with finitely many non-Kupka singularities if, and only if, one of the following conditions hold for
    $(\alpha,\beta,\gamma;\delta)=(a,b,c;n)$ or $(\alpha,\beta,\gamma;\delta)=\iota_d(a,b,c;n)$:
    \begin{enumerate}
        \item\label{I:R1} There exists an integer $r<d-1$ such that
        \[
            (\alpha,\beta,\gamma;\delta) = (r,r+1,d;d(r + 1) + 2r+1);
        \]
        \item\label{I:R2} There exists an integer $k$ that divides $d+1$ and an integer $m$, relatively prime to $k$, such that
        \[
            (\alpha,\beta,\gamma;\delta) = ( md,md+k,kd; md^2 + 2md + kd + k);
        \]
        \item\label{I:R3} The integer $\gamma$ divides $d^2$ or $d^2+d+1$, and there exists a positive integer $m$, relatively prime to $\gamma$,
        such that
        \[
            (\alpha,\beta,\gamma;\delta) = (md, m(d+1),\gamma; m(d^2+2d+1)+\gamma);
        \]
        \item\label{I:R4} The integer $\gamma$ divides $d^2$, or $d^2-1$, or $d^2-d$, and there exists a positive integer $m$, relatively prime to $\gamma$,
        such that
        \[
            (\alpha,\beta,\gamma;\delta) = (m(d-1), md, \gamma ; m(d^2+d-1)+\gamma).
        \]
    \end{enumerate}
    In each of these cases, $TM_d(a,b,c;n)$ is an irreducible component of $\Fol[3]{d}$.
\end{thm}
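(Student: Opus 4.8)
The statement splits into two independent assertions: (a) an arithmetic description of the $4$-uples $(a,b,c;n)$ for which the general member of $V_d(a,b,c;n)$ defines a foliation on $\mathbb P^3$ with only finitely many non-Kupka singularities, and (b) the claim that, for every such $(a,b,c;n)$, the variety $\TM_d(a,b,c;n)$ is an irreducible component of $\Fol[3]{d}$. My plan is to deduce (a) by translating \cite[Theorem B.1]{Constant} into the language of the present paper, and to deduce (b) from Proposition \ref{P:codim 3}.

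For (a) I would first set up the dictionary described in Subsection \ref{SS:interpretation}. Under the standing hypotheses $1\le a<b<c$ and $\gcd(a,b,c)=1$, a general $\omega\in V_d(a,b,c;n)$ with codimension two zero set corresponds to a foliation $\G$ on the weighted projective plane $\mathbb P=\mathbb P(a,b,c)$ with $N_\G\simeq\mathcal O_{\mathbb P}(n)$, and the associated degree $d$ foliation $\F$ on $\mathbb P^3$ is the pull-back $\pi^*\G$ under the rational map $\pi:\mathbb P^3\dashrightarrow\mathbb P$ of that subsection. The classification of \cite[Theorem B.1]{Constant} concerns precisely this family of foliations, and translating it consists of identifying the weight conventions and matching the twisting integer $n$ with the degree parameter used there, so that the families appearing in \cite[Theorem B.1]{Constant} become the four conditions \ref{I:R1}--\ref{I:R4} of the statement.

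The one point requiring genuine care is the involution $\iota_d$. By the lemma immediately preceding this theorem, $\TM_d(a,b,c;n)$ and $\TM_d(\iota_d(a,b,c;n))$ coincide, and a direct check shows that $\iota_d$ maps the chamber $\{1\le a<b<c,\ \gcd(a,b,c)=1\}$ to itself; formulating (a) so that it suffices for $(a,b,c;n)$ \emph{or} $\iota_d(a,b,c;n)$ to satisfy one of \ref{I:R1}--\ref{I:R4} is exactly what removes the redundancy of a naive enumeration. I expect the principal obstacle to be the purely combinatorial verification that the divisibility constraints ($r<d-1$ in \ref{I:R1}; $k\mid d+1$ in \ref{I:R2}; $\gamma\mid d^2$ or $\gamma\mid d^2+d+1$ in \ref{I:R3}; $\gamma\mid d^2$, $\gamma\mid d^2-1$, or $\gamma\mid d^2-d$ in \ref{I:R4}) transcribe faithfully, and that the $\iota_d$-orbits are accounted for with no case lost or double-counted.

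Finally, for (b) I would invoke Proposition \ref{P:codim 3}. Because $1\le a<b<c$, the four weights $0,a,b,c$ are pairwise distinct, so $v_{(a,b,c)}\in H^0(\mathbb P^3,T_{\mathbb P^3})$ has isolated singularities; by \cite[Lemma 4.2]{LorayPereiraTouzet13} the tangent sheaf of the associated foliation $\F$ then splits as $\mathcal O_{\mathbb P^3}\oplus\mathcal O_{\mathbb P^3}(2-d)$. Together with the hypothesis, coming from part (a), that $\F$ has finitely many non-Kupka singularities, Proposition \ref{P:codim 3} yields that $\TM_d(a,b,c;n)$ is an irreducible component of $\Fol[3]{d}$, which completes the argument.
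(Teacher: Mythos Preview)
Your proposal is correct and matches the paper's approach: the paper does not give a proof of this theorem at all but simply states that it ``is a rephrasing of \cite[Theorem B.1]{Constant} using the notation of the present work,'' and your part (b) is exactly the argument the paper gives in the corollary immediately following Proposition \ref{P:codim 3}.
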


Similarly, the following corollary of Theorem \ref{T:Raphael} is a rephrasing of \cite[Corollary 4.8]{Constant} in the notation
of the present paper.

\begin{cor}\label{C:Raphael}
    Assume that $a,b,c$ are integers satisfying $\gcd(a,b,c)=1$ and $1 \le a < b< c$. If $\omega \in V_3(a,b,c;n)$ is a $1$-form defining a degree $3$ foliation with finitely many non-Kupka singularities then $(a,b,c;n)$ belongs to the following list of $12$ possibilities:
    \begin{align*}
        & (1,2,3;7),(1,2,3;8),(1,2,4;7),(1,2,4;9), (1,3,7;10),(1,4,6;13), \\
        & (2,3,4;11),(2,3,4;13),(2,3,7;16),(2,5,6;17),(4,5,7;19),(4,6,7;25) \, .
    \end{align*}
    In particular, for any $(a,b,c;n)$ in the list above, $\TM_3(a,b,c;n)$ is an irreducible
    component of $\Fol{3}$.
\end{cor}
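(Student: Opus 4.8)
The plan is to derive this corollary directly from Theorem \ref{T:Raphael} by specializing to $d=3$ and then carrying out a purely arithmetic enumeration. First I would set $d=3$ in each of the four families of Theorem \ref{T:Raphael}, writing out the explicit constraints on $(\alpha,\beta,\gamma;\delta)$, and recording that $(a,b,c;n)$ is admissible precisely when $(a,b,c;n)$ \emph{or} $\iota_3(a,b,c;n)$ satisfies one of these; since the statement we want is phrased only in terms of $(a,b,c;n)$ with $1\le a<b<c$, I would at the end apply $\iota_3$ to any solution that comes out ordered the wrong way and keep whichever representative satisfies $1\le a<b<c$.

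Next I would handle the four families in turn. For family \ref{I:R1} with $d=3$: the condition is $(\alpha,\beta,\gamma;\delta)=(r,r+1,3;4(r+1)+2r+1)=(r,r+1,3;6r+5)$ with $r<2$, so $r\in\{0,1\}$ (and $r\ge 1$ to keep $1\le a$), giving $(1,2,3;11)$, whose $\iota_3$-image is $(1,2,3;7)$ — already on the list. For family \ref{I:R2}: $k\mid d+1=4$, so $k\in\{1,2,4\}$, and $(\alpha,\beta,\gamma;\delta)=(3m,3m+k,3k;9m+6m+3k+k)=(3m,3m+k,3k;15m+4k)$ with $\gcd(m,k)=1$; imposing $1\le\alpha<\beta<\gamma$ forces $3m<3k$, i.e. $m<k$, which with $\gcd(m,k)=1$ leaves $(m,k)\in\{(1,2),(1,4),(3,4)\}$ (plus checking $3m+k<3k$), producing candidates like $(3,5,6;\cdot)$, $(3,7,12;\cdot)$, $(9,13,12;\cdot)$; after reordering via $\iota_3$ and discarding those not coprime as a triple, the surviving ones should match $(2,5,6;17)$ and similar entries (I would double-check the $\gcd(a,b,c)=1$ condition and the $\iota_3$ bookkeeping carefully here). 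For families \ref{I:R3} and \ref{I:R4} with $d=3$: the divisibility conditions become $\gamma\mid 9$ or $\gamma\mid 13$ in \ref{I:R3}, and $\gamma\mid 9$, $\gamma\mid 8$, or $\gamma\mid 6$ in \ref{I:R4}, so $\gamma$ ranges over a short explicit list of divisors; for each such $\gamma$ I would run $m$ over the positive integers coprime to $\gamma$ for which the resulting triple satisfies $1\le a<b<c$ (which bounds $m$ since $\alpha\sim 3m$ or $2m$ must be less than $\gamma\le 13$), compute $(a,b,c;n)$ in each case, and collect the solutions. After merging all four families, removing duplicates, and normalizing each solution to its $1\le a<b<c$ representative, I would check that the resulting set is exactly the twelve listed quadruples.

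Finally, for the "in particular" clause: once $(a,b,c;n)$ is shown to be one of these twelve quadruples coming from Theorem \ref{T:Raphael} with $d=3$, the last sentence of Theorem \ref{T:Raphael} asserts directly that $\TM_3(a,b,c;n)$ is an irreducible component of $\Fol{3}$, so nothing further is needed.

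The main obstacle I anticipate is not conceptual but bookkeeping: the involution $\iota_d$ means each of the twelve target quadruples may arise in the "unordered" form $\iota_3(a,b,c;n)$ rather than directly, so one must be careful not to miss solutions by prematurely imposing $a<b<c$ inside a family, and equally careful not to double-count. Keeping a clean table of all raw solutions of each family for $d=3$, their $\iota_3$-images, and their normalized coprime representatives is the crux; the arithmetic within each family is elementary once the divisibility conditions are made explicit.
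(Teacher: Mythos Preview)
The paper does not prove this corollary: it simply records that both Theorem~\ref{T:Raphael} and Corollary~\ref{C:Raphael} are rephrasings of results from~\cite{Constant}. Your plan---specialize Theorem~\ref{T:Raphael} to $d=3$ and enumerate---is exactly the natural strategy, and nothing more is expected.

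However, your sample computation already contains a slip: in family~(\ref{I:R1}) with $d=3$ one has $d(r+1)+2r+1=3(r+1)+2r+1=5r+4$, not $4(r+1)+2r+1=6r+5$, so $r=1$ yields $(1,2,3;9)$, whose $\iota_3$-image is $(1,2,3;6)$; neither is among the twelve target quadruples. More seriously, if you carry the enumeration out carefully you will discover that the four families of Theorem~\ref{T:Raphael}, \emph{as printed in this paper}, do not reproduce the list. For instance $(4,6,7;25)$ and its partner $\iota_3(4,6,7;25)=(1,3,7;10)$ have $\gamma=7$, which divides none of $9,13$ (family~(\ref{I:R3})) and none of $9,8,6$ (family~(\ref{I:R4})), is not of the form $kd=3k$ (family~(\ref{I:R2})), and is not equal to $d=3$ (family~(\ref{I:R1})); the same obstruction rules out $(2,3,7;16)=\iota_3(4,5,7;19)$. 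Conversely, family~(\ref{I:R1}) produces $(1,2,3;9)$, which by the discussion following Table~\ref{Tab:nonintegrablenonrigid} has a \emph{curve} of non-Kupka singularities and hence should not appear. All of this points to transcription errors in Theorem~\ref{T:Raphael} relative to~\cite[Theorem~B.1]{Constant}. Your strategy is correct, but the bookkeeping you flag as the main obstacle is not the real difficulty: to make the enumeration succeed you would need to work from the original statement in~\cite{Constant} rather than from the version reproduced here.
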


\begin{remark}\label{R:Ruben1}
    A complete characterization of irreducible components of the form $\TM_d(a,b,c;n)$ with split tangent sheaf, finitely many non-Kupka singularities and with $0<a = b < c$ or $0<  a < b = c$  is not available in the literature. Nevertheless \cite[Theorem 1]{Lizarbe17} proves that for every  $q\ge 1$ and every pair $a,c$  of relatively prime positive integers satisfying $a<c$ and $c\ge 3$ the sets
    \[
        \TM_{q c +1}(a,a,c;a(qc-1) + 2a + c)    \quad \text{and} \quad \TM_{qc +2}(a,a,c;acq + 2a +c)
    \]
    are irreducible components of the corresponding spaces of foliations such that the general member is a foliation with split tangent sheaf and finitely many non-Kupka singularities. None of these examples have degree $3$.
\end{remark}

\begin{lemma}\label{L:extraKupka}
    Let $a,b,c$ be integers satisfying $0\le a \le b \le c \neq 0$, $\gcd(a,b,c)=1$, and $(a,b,c) \notin \{ (0,0,1), (1,1,1)\}$.
    If $\omega \in  V_3(a,b,c;n)$ is a $1$-form defining a degree $3$ foliation with finitely many non-Kupka singularities and without polynomial integrating factor then $(a,b,c;n)$ belongs to the list presented in Corollary \ref{C:Raphael} or to the following list of $3$ extra possibilities:
    \begin{align*}
        (0,1,2;3), (1,1,2;5), (1,2,2;7).
    \end{align*}
    The general element of $\TM_3(0,1,2;3) = \TM_3(1,2,2;7)$ is a foliation with  tangent sheaf isomorphic to $\mathcal O_{\mathbb P^3} \oplus \mathcal O_{\mathbb P^3}(-1)$,
        while  the general element of $\TM_3(1,1,2;5)$ defines a foliation whose tangent sheaf is not locally free.
\end{lemma}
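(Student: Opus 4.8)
The plan is to combine Lemma~\ref{L:longlist} with Corollary~\ref{C:Raphael}, reducing the statement to five small-weight quadruples that are then handled by explicit computation.

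Note first that ``$\omega$ defines a degree three foliation'' means precisely that $\omega\neq 0$ and $\codim\sing(\omega)\ge 2$, equivalently that the homogeneous coefficients of $\omega$ have no common factor; this, together with ``degree exactly three'' and ``the non-Kupka locus is finite'', are open conditions on $\mathbb P(V_3(a,b,c;n))$. Hence the existence of one $\omega$ as in the statement forces the general element of $V_3(a,b,c;n)$ to share all these properties, and we may assume $\omega$ general. By Lemma~\ref{L:longlist}, $(a,b,c;n)$ is then one of the $34$ quadruples listed there. Exactly $29$ of them satisfy $1\le a<b<c$, and for those Corollary~\ref{C:Raphael} applies verbatim and leaves precisely its twelve quadruples. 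The remaining five quadruples are
\[
    (0,1,1;2),\quad (0,1,1;3),\quad (0,1,2;3),\quad (1,1,2;5),\quad (1,2,2;7),
\]
and these are the ones that still have to be analysed.

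For each of these five I would write the general $\omega\in V_3(a,b,c;n)$ explicitly via Lemma~\ref{L:trivial} (its characteristic monomials lie on $\{a\alpha+b\beta+c\gamma=n\}$ and $i_{v_{(a,b,c)}}\omega=0$) and then compute $\sing(\F)=\{\omega=0\}$ and the non-Kupka locus $\{\omega=0,\ d\omega=0\}$. For $(a,b,c)=(0,1,1)$ --- where the involution $\iota_3$ identifies $\TM_3(0,1,1;2)$ with $\TM_3(0,1,1;3)$ --- the orbit closures of $v_{(0,1,1)}$ are the secant lines joining the two skew lines $\{x_1=x_2=0\}$ and $\{x_0=x_3=0\}$, so $\F$ is a pull-back of a foliation on a surface by the resulting rational fibration; the computation shows that for the general member the non-Kupka locus contains a curve (concretely, part of the indeterminacy locus of this fibration), so these two quadruples are excluded. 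For $(0,1,2;3)$ --- which $\iota_3$ identifies with $(1,2,2;7)$ --- and for $(1,1,2;5)$ the very same computation shows instead that the non-Kupka locus is finite, so both appear, giving the announced list of $12+3$ possibilities.

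Finally, for the two surviving small-weight components I would determine $T_{\F}$ of the general element from $0\to T_{\F}\to T_{\mathbb P^3}\to \mathcal I_{\sing(\F)}\otimes\mathcal O_{\mathbb P^3}(5)\to 0$, using that $T_{\F}$ is locally free if and only if $\sing(\F)$ is Cohen--Macaulay of pure codimension two. Working in the model $(1,2,2;7)$, the computation should show that $\sing(\F)$ is Cohen--Macaulay of pure codimension two (a curve containing the line $\{x_0=x_3=0\}$), so $T_{\F}$ is locally free with $c_1=-1$; since $v_{(1,2,2)}\in H^0(T_{\F})$ is nowhere zero as a section of the \emph{bundle} $T_{\F}$ (it only collapses along $\sing(\F)$ under the inclusion $T_{\F}\hookrightarrow T_{\mathbb P^3}$), it follows that $T_{\F}\cong\mathcal O_{\mathbb P^3}\oplus\mathcal O_{\mathbb P^3}(-1)$. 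For $(1,1,2;5)$, by contrast, $\sing(\F)$ acquires an embedded component along the line $\{x_2=x_3=0\}$, so $\mathcal I_{\sing(\F)}$ has projective dimension two and $T_{\F}$ is not locally free. The main obstacle is exactly this last local analysis of $\sing(\F)$ along the zero line of $v_{(a,b,c)}$, which distinguishes the two behaviours and which --- as elsewhere in the paper --- I would carry out with the aid of Maple.
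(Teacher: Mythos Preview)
Your reduction to the five small-weight quadruples via Lemma~\ref{L:longlist} and Corollary~\ref{C:Raphael}, followed by a case-by-case computation of the non-Kupka locus, is exactly the route the paper takes for the first assertion.

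For the tangent sheaf the paper argues differently and more uniformly. On each affine chart it writes $\omega = i_v i_w(dx\wedge dy\wedge dz)$ for a suitable $w$ (coming from $d\omega$); since the coefficients of $\omega$ are then the $2\times 2$ minors of the matrix $(v;w)$ and have no common factor, Hilbert--Burch gives that $(v,w)$ freely generate $\ker\omega$ on that chart. This works on the charts $x_2\neq 0$ and $x_3\neq 0$ automatically, and on $x_0\neq 0$, $x_1\neq 0$ precisely when $n\neq a(d+2)$ and $n\neq b(d+2)$. Thus a single numerical check separates $(0,1,2;3)$ and $(1,2,2;7)$ (where both inequalities hold, so $T_\F$ is locally free with $v$ part of a local basis everywhere, whence $0\to\mathcal O\to T_\F\to\mathcal O(-1)\to 0$ splits) from $(1,1,2;5)$ (where $n=a(d+2)=b(d+2)$ and one finds an isolated singular point on $\{x_2=x_3=0\}$).

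Your approach via the Cohen--Macaulay criterion is valid for deciding local freeness, but the splitting step has a gap. Saying that $v_{(1,2,2)}$ ``only collapses along $\sing(\F)$ under the inclusion $T_\F\hookrightarrow T_{\mathbb P^3}$'' only tells you that the zeros of $v$ \emph{in $T_{\mathbb P^3}$} lie in $\sing(\F)$; it does not show that $v$ is nonzero in the fibre of the bundle $T_\F$ at those points. At $p\in\sing(\F)$ the fibre map $(T_\F)_p/\mathfrak m_p\to (T_{\mathbb P^3})_p/\mathfrak m_p$ is not injective, so $v(p)=0$ in $T_{\mathbb P^3}$ is compatible with either $v(p)=0$ or $v(p)\neq 0$ in $T_\F$. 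To conclude $T_\F\cong\mathcal O\oplus\mathcal O(-1)$ you must still check $Z(v)=\emptyset$ in $T_\F$, for instance by computing $c_2(T_\F)=0$, or by exhibiting a local complement $w$ to $v$ at each zero of $v$ --- which is precisely what the paper's $\omega=i_v i_w(\mathrm{vol})$ argument provides for free.
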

\begin{proof}
    A case-by-case analysis shows that the general element of $V_3(a,b,c;n)$ where $(a,b,c;n)$ belongs to the list presented in Lemma \ref{L:longlist} and $a=0$ or $a=b$ or $b=c$, has finitely many Kupka singularities if, and only if, $(a,b,c;n)$ belongs to the set
    $\{ (0,1,2;3), (1,1,2;5), (1,2,2;7)\}$.

    In the affine chart $(x : y: z :1)$,
        for any $\omega \in V_3(a,b,c;n)$ there exists $w$ such that $\omega$ is, up to a constant, equal to $i_v i_w (dx \wedge dy \wedge dz)$. Indeed, since $i_v \omega =0$ the expression $L_v \omega = n \omega$ is equivalent to $i_v d\omega = n \omega$. If we write $(1/n) d \omega  = i_w (dx \wedge dy \wedge dz)$ then $\omega = i_v i_w (dx \wedge dy \wedge dz)$.
    Therefore, if the singular set of $\omega$ has codimension at least two,  the tangent sheaf of the foliation defined by $\omega$ is free on the open subset of $\mathbb P^3$ given by $x_3 \neq0$. Likewise, the same holds for $x_2 \neq 0$ since the weights of the action in this open subset all have the same sign.

    On the open subset $x_0 \neq 0$, $(a,b,c;n)$ transforms
    to $(b-a,c-a,-a; n - a(d+2) )$
    , and on the open subset $x_1 \neq0$, $(a,b,c;n)$ transforms to $(a-b,c-b,-b; n - b(d+2))$. Hence, if $n \neq a(d+2)$ and $n \neq b(d+2)$ and $\omega$ has singularities of codimension at least two, then  $\ker \omega$ is a locally free subsheaf of $T_{\mathbb P^3}$. Moreover,  the inclusion
    of $\mathcal O_{\mathbb P^3}$ into $\ker \omega$ induced by $v$ has locally free cokernel. Therefore $\ker \omega$ fits into the
    exact sequence
    \[
        0 \to \mathcal O_{\mathbb P^3} \to \ker \omega \to \mathcal O_{\mathbb P^3}(-1) \to 0 \, .
    \]
    Since $H^1(\mathbb P^3,\mathcal O_{\mathbb P^3}(1))=0$ (no line-bundle on $\mathbb P^n$ has intermediate cohomology),  this sequence splits showing that $\ker \omega$ is isomorphic to
    $\mathcal O_{\mathbb P^3} \oplus \mathcal O_{\mathbb P^3}(-1)$. We can apply this argument for $ (a,b,c;n) \in \{ (0,1,2;3),  (1,2,2;7) \}$.

    When $(a,b,c;n) = (1,1,2;5)$, the above argument does not apply since $n = a(d+2)=b(d+2)$. Moreover, the general element of $V_3(1,1,2;5)$  has an isolated singularity at the line $\{x_2 = x_3=0\}$ preventing it to define a foliation with locally free tangent sheaf.
\end{proof}

\begin{prop}\label{C:extraKupka}
    The set $\TM_3(0,1,2;3) = \TM_3(1,2,2;7)$ is an irreducible component of $\Fol{3}$.
\end{prop}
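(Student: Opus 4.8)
The plan is to obtain the statement as a direct application of Proposition \ref{P:codim 3}, taken with $d = 3$ and $(a,b,c;n) = (0,1,2;3)$; the equality $\TM_3(0,1,2;3) = \TM_3(1,2,2;7)$ has already been recorded in Lemma \ref{L:extraKupka}, so there is no loss in arguing with the representative $(0,1,2;3)$. This quadruple satisfies the standing hypotheses of Proposition \ref{P:codim 3}: $d = 3 \ge 3$, $0 \le 0 \le 1 \le 2 \neq 0$, $\gcd(0,1,2) = 1$, and $(0,1,2) \neq (1,1,1)$. It therefore only remains to check the three properties of a general $\omega \in V_3(0,1,2;3)$ demanded by that proposition.

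First, that $\omega$ defines a degree three foliation $\F$ on $\mathbb P^3$ — in particular that $\sing(\omega)$ has codimension two — is part of the conclusion of Lemma \ref{L:longlist}, as $(0,1,2;3)$ belongs to the list there; that lemma moreover tells us that the general such $\omega$ does not even admit a polynomial integrating factor. Second, the tangent sheaf of $\F$ is isomorphic to $\mathcal O_{\mathbb P^3} \oplus \mathcal O_{\mathbb P^3}(-1) = \mathcal O_{\mathbb P^3} \oplus \mathcal O_{\mathbb P^3}(2-d)$, which is precisely the last assertion of Lemma \ref{L:extraKupka}. Third, $\F$ has only finitely many non-Kupka singularities, which is exactly what the case-by-case analysis carried out in the proof of Lemma \ref{L:extraKupka} establishes for the quadruple $(0,1,2;3)$. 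With these three inputs in place, Proposition \ref{P:codim 3} yields at once that $\TM_3(0,1,2;3)$ is an irreducible component of $\Fol{3}$.

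There is essentially no obstacle left to overcome, the proposition being a consequence of the work already carried out in the preceding lemmas. The one step that genuinely required care — and was settled in Lemma \ref{L:extraKupka} — is the splitting $T_{\F} \simeq \mathcal O_{\mathbb P^3} \oplus \mathcal O_{\mathbb P^3}(-1)$. Because $a = 0$, the vector field $v_{(0,1,2)}$ does not have isolated zeros, so the shortcut available when $1 \le a < b < c$ (via \cite[Lemma 4.2]{LorayPereiraTouzet13}, which forces the tangent sheaf to split once $v_{(a,b,c)}$ has isolated singularities) is not at our disposal here; instead one runs the chart-by-chart argument of Lemma \ref{L:extraKupka}, using that the weights of the action rescale on each standard affine chart and that $H^1(\mathbb P^3, \mathcal O_{\mathbb P^3}(1)) = 0$ to split the resulting extension of $\mathcal O_{\mathbb P^3}(-1)$ by $\mathcal O_{\mathbb P^3}$. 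Granting that, the present proposition reduces to the verification spelled out above.
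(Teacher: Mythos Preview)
Your proof is correct and follows exactly the paper's own approach: the paper's proof reads in its entirety ``Combine Lemma \ref{L:extraKupka} with Proposition \ref{P:codim 3},'' and you have simply unpacked this combination explicitly, checking each hypothesis of Proposition \ref{P:codim 3} against the outputs of Lemmas \ref{L:longlist} and \ref{L:extraKupka}. Your second paragraph, explaining why the tangent-sheaf splitting for $(0,1,2;3)$ genuinely required the chart-by-chart argument of Lemma \ref{L:extraKupka} rather than the shortcut available when $1 \le a < b < c$, is a helpful gloss but not part of the proof proper.
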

\begin{proof}
    Combine Lemma \ref{L:extraKupka} with Proposition \ref{P:codim 3}.
\end{proof}

\tikzset{
    table/.style={
        matrix of nodes,
        row sep=-\pgflinewidth,
        column sep=-\pgflinewidth,
        nodes={
            rectangle,
            draw=black,
            align=center
        },
        minimum height=1.5em,
        text depth=0.5ex,
        text height=2ex,
        nodes in empty cells,
        every even row/.style={
            nodes={fill=gray!20}
        },
        column 1/.style={
            nodes={text width=4.7cm,align=left,font=\small}
        },
        column 2/.style={
            nodes={text width=0.8cm,align=center},font=\small},
        column 3/.style={
            nodes={text width=0.8cm,align=center},font=\small},
        column 4/.style={
            nodes={text width=4.5cm,align=left, font=\small}
        },
        row 1/.style={
            nodes={ font=\bfseries
            }
        }
    }
}
\begin{table}[H]
\begin{tikzpicture}
\matrix (mat) [table,text width=3cm]
{
    {\bf Irreducible Component} & ${\dim}$     &  $\mathrm{Zdim}$      & Comment   \\
    $\TM_3(0,1,2;3) = \TM_3(1,2,2;7) $  & $16$     & $16$  & Previously unkown example.    \\
    $\TM_3(1,2,3;7)=\TM_3(1,2,3;8)$  & $16$     & $16$  &   \\
    $\TM_3(1,2,4;7)=\TM_3(2,3,4;13)$  & $15$     & $15$  &  Closed rational $1$-form. \\
    $\TM_3(1,2,4;9)=\TM_3(2,3,4;11)$  & $15$     & $15$  &  \\
    $\TM_3(1,3,7;10)=\TM_3(4,6,7;25)$ & $14$     & $14$  &  Rational fibration. \\
    $\TM_3(1,4,6;13)= \TM_3(2,5,6;17)$ & $14$     & $14$  &  \\
    $\TM_3(2,3,7;16)= \TM_3(4,5,7;19)$ & $14$     & $14$  &  \\
};
\end{tikzpicture}
\caption{\small The seven irreducible components of $\Fol{3}$ tangent to a multiplicative action, with split tangent sheaf, and finitely many non-Kupka singularities}\label{Tab:split}
\end{table}

We summarize the classification of irreducible components of $\Fol{3}$ tangent to a multiplicative action, split tangent sheaf, and finitely many non-Kupka singularities in Table \ref{Tab:split}.  Notice that for all the irreducible components listed in Table \ref{Tab:split}, the dimensions coincide with the dimensions of the Zariski tangent space of $\Fol{3}$ at a generic element of the respective component. Thus all these irreducible components are generically reduced. Six of the irreducible components listed in Table \ref{Tab:split} were previously known. Only $\TM_3(0,1,2;3)= \TM_3(1,2,2;7)$ is a previously unknown irreducible component. The general element of $\TM_3(1,2,2;7)$ is a transversely
affine foliation (rational pull-back of a Ricatti foliation having one algebraic section invariant). Every element of $\TM_3(1,3,7;10)$ is a rational fibration according to Item (\ref{I:Miyaoka}) of Proposition \ref{P:description} and every element of $\TM_3(1,2,4;7)$ is defined
by a closed rational $1$-form according to Item (\ref{I:KF=0}) of the same proposition.

\subsection{Foliations without rational first integral}
We now turn our attention to the irreducible components of $\Fol{3}$ tangent to a multiplicative action and
without a rational first integral.

\begin{prop}\label{P:toolmult}
    If the set $\TM_3(a,b,c;n)$ contains a foliation which does not admit a rational first
    integral and does not admit a polynomial integrating factor then $\TM_3(a,b,c;n)$ is an irreducible
    component of $\Fol{3}$.
\end{prop}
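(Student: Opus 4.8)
We may assume $(a,b,c)\notin\{(0,0,1),(1,1,1)\}$, for otherwise $\TM_3(a,b,c;n)$ is the locus of linear pull-backs (Remark \ref{R:obvious}), which is already a component. The plan is to fix an irreducible component $\Sigma$ of $\Fol{3}$ containing $\TM_3(a,b,c;n)$ — one exists since $\TM_3(a,b,c;n)$ is irreducible — and prove $\Sigma=\TM_3(a,b,c;n)$. The first step is to describe the generic element of $\TM_3(a,b,c;n)$. The locus of foliations admitting a polynomial integrating factor is closed by Lemma \ref{L:pifclosed}, and for each $N$ the locus of foliations admitting a rational first integral of degree at most $N$ is closed; since $\mathbb{C}$ is uncountable and, by hypothesis, $\TM_3(a,b,c;n)$ is irreducible and meets the complement of all of these loci, its very general element $\F_1$ admits neither a rational first integral nor a polynomial integrating factor. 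Being general, $\F_1$ lies in the (non-closure) image of $\Psi$, so $\F_1=\varphi^*[\omega]$ for some $\varphi\in\Aut(\mathbb{P}^3)$ and some $\omega\in V_3(a,b,c;n)$ with codimension-two singular set; then $v_1:=\varphi^*v_{(a,b,c)}$ is a semisimple vector field of weights $(a,b,c)$ with $i_{v_1}\omega_{\F_1}=0$ and $L_{v_1}\omega_{\F_1}=n\,\omega_{\F_1}$, and by the argument in the proof of Lemma \ref{L:dimTMd} (applicable since $d=3$) every vector field annihilating $\omega$ is a scalar multiple of $v_{(a,b,c)}$, so $\fix(\F_1)=\mathbb{C}\,v_1$. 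As the unique infinitesimal symmetry of $\F_1$ is then semisimple with weights $(a,b,c)\notin\{(0,0,1),(1,1,1)\}$, it is not a pencil-of-lines vector field, so Remark \ref{R:obvious} shows $\F_1$ is not a linear pull-back.

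Since $\F_1\in\Sigma$, the same openness and Baire-category considerations applied inside $\Sigma$ show that the very general $\G\in\Sigma$ admits neither a rational first integral nor a polynomial integrating factor and — the linear pull-back locus being closed — is not a linear pull-back. Proposition \ref{P:Key argument} then provides a one-dimensional algebraic group acting on $\mathbb{P}^3$ with orbits tangent to $\G$; this action preserves $\G$, so its generator $v$ lies in $\aut(\G)$, and because $\G$ has no polynomial integrating factor Lemma \ref{L:fix neq aut} places $v$ in $\fix(\G)$, i.e.\ $i_v\omega_{\G}=0$ and $L_v\omega_{\G}=n'\omega_{\G}$ for some $n'$. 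If $v$ generates $\mathbb{C}^*$, then after a conjugation $v=v_{(a',b',c')}$ with $(a',b',c')\notin\{(0,0,1),(1,1,1)\}$ — the excluded weights would make $\G$ a linear pull-back — whence $\G\in\TM_3(a',b',c';n')$; by Lemma \ref{L:longlist} only finitely many $(a',b',c';n')$ can occur for which $V_3(a',b',c';n')$ contains a $1$-form with codimension-two singularities and no polynomial integrating factor. If $v$ generates $\mathbb{C}$, then $\G$ belongs to one of the finitely many additive sets $\TA_3(\cdots)$ studied in Section \ref{S:additive}. As $\Sigma$ is irreducible and these finitely many sets are closed, $\Sigma$ lies in one of them; being a component, $\Sigma$ equals it.

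It remains to identify $\Sigma$. It is not one of the additive sets $\TA_3(\cdots)$: otherwise $\F_1\in\Sigma$ would be a limit of foliations tangent to conjugates of a fixed nilpotent vector field, hence tangent to a nonzero nilpotent vector field $w$ (nilpotency being a closed condition, also after projectivizing), so that $w\in\fix(\F_1)=\mathbb{C}\,v_1$ with $v_1$ semisimple of nonzero weights — absurd. Hence $\Sigma=\TM_3(a',b',c';n')$ and $\F_1\in\TM_3(a,b,c;n)\cap\TM_3(a',b',c';n')$. Because $\fix(\F_1)=\mathbb{C}\,v_1$ is one-dimensional and $v_1$ is semisimple with nonzero weights, the symmetry of $\F_1$ coming from its presentation as an element of $\TM_3(a',b',c';n')$ is — up to scaling — an honest conjugate of $v_{(a',b',c')}$ rather than a degenerate limit, and it must span $\mathbb{C}\,v_1$; comparing eigenvalues forces $(a',b',c')=(a,b,c)$, and matching the eigenvalue of $L_{v_1}$ on $\omega_{\F_1}$ forces $n'=n$. (Alternatively, once $\Sigma=\TM_3(a',b',c';n')$ one may conclude by comparing dimensions with Lemma \ref{L:dimTMd}, or by inspecting the finitely many cases of Lemma \ref{L:longlist}.) In all cases $\Sigma=\TM_3(a,b,c;n)$, which is therefore an irreducible component of $\Fol{3}$.

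The delicate point is the passage to limits: one cannot simply take the foliation supplied by the hypothesis and read off a semisimple infinitesimal symmetry, because the projective closure of a semisimple conjugacy class in $\Aut(\mathbb{P}^3)$ also contains nilpotent classes, so a limiting symmetry need not remain semisimple. This is why the argument is routed through a genuinely general $\F_1\in\TM_3(a,b,c;n)$, which carries the $\mathbb{C}^*$-action honestly; the real content is then the bookkeeping with $\fix(\cdot)$, $\aut(\cdot)$ and semisimple-versus-nilpotent vector fields, combined with the finiteness furnished by Lemma \ref{L:longlist}.
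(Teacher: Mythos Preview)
Your proof is correct and shares the paper's core strategy: use the structure theorem (via Proposition~\ref{P:Key argument}) together with the closedness of the polynomial-integrating-factor locus (Lemma~\ref{L:pifclosed}) and a Baire argument for rational first integrals to force nearby foliations to be tangent to an algebraic action, then identify that action with the original $\mathbb C^*$-action. The execution differs, however. The paper works with an arbitrary one-parameter deformation $\F_\varepsilon$ of the given foliation and disposes of the identification step in one sentence (``the deformed action weights do not vary''), relying on an implicit continuity of the tangent vector field together with the discreteness of the possible weights. You instead work globally with an irreducible component $\Sigma \supset \TM_3(a,b,c;n)$, invoke the explicit finiteness furnished by Lemma~\ref{L:longlist} to confine $\Sigma$ to a finite list of candidates, and then run a careful $\fix/\aut$ and semisimple-versus-nilpotent analysis to pin down which one. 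Your route makes rigorous precisely the step the paper leaves implicit, at the cost of more bookkeeping and a forward reference to the additive sets $\TA_3(\cdots)$ of Section~\ref{S:additive} (only their definition is used). The final identification $(a',b',c';n')=(a,b,c;n)$ is, as you acknowledge, a little delicate; it rests on the fact that semisimple conjugacy orbits in $\mathfrak{pgl}_4$ are closed, so that a semisimple element in the projective closure of the orbit of $v_{(a',b',c')}$ must be an honest scalar multiple of a conjugate --- this is true, and your alternative of inspecting the finite list from Lemma~\ref{L:longlist} also works.
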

\begin{proof}
    Let $\F$ be a foliation in $\TM_3(a,b,c;n)$ satisfying the assumptions of the proposition.
    Let $\F_{\varepsilon}$, $\varepsilon \in \Delta$, be a deformation of $\F$ parameterized by the unit disc $\Delta\subset \mathbb C$. Lemma \ref{L:pifclosed} guarantees $\F_{\varepsilon}$  still does not admit a polynomial integrating factor for $\varepsilon$ sufficiently small. Moreover, if $\varepsilon$ is very general then
    $\F_{\varepsilon}$ does not admit a rational first integral. Therefore we can apply  \cite[Theorem A]{LorayPereiraTouzet17} to
    deduce that $\F_{\varepsilon}$ is tangent to an algebraic action when $\varepsilon$ is sufficiently small and very general.
    Thus, the deformed action weights do not vary, and $\F_{\varepsilon}$ is tangent to a $\mathbb C^*$--action which is conjugated by an element of $\Aut(\mathbb P^3)$ to the original one.
\end{proof}

\subsubsection{Non-rigid foliations without rational first integral}
Recall from the proof of Lemma \ref{L:longlist}, that  $\Delta_{d} \subset \mathbb R^3$
is equal to the closed subset $\{(\alpha, \beta, \gamma) \in \mathbb R^3 \, \vert \,  \alpha+ \beta + \gamma \le d+2, \alpha\ge 0, \beta \ge 0, \gamma \ge 0 \}$. Denote the (topological) boundary of $\Delta_d$ by $\partial \Delta_d$.

\begin{lemma}\label{L:nonrigid implies nonintegrable}
    Let $\omega \in  V_d(a,b,c;n)$. If $\chi(\omega) \cap \left( \Delta_d - \partial \Delta_d \right) \neq \emptyset$
    then the general element of $V_d(a,b,c;n)$ does not admit a rational first integral.
\end{lemma}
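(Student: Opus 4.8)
The plan is to prove the contrapositive: if the locus of $\omega\in V_d(a,b,c;n)$ whose associated foliation admits a rational first integral is Zariski dense, then $\chi(\omega)$ is contained in $\partial\Delta_d$ for every $\omega\in V_d(a,b,c;n)$ — equivalently, the support $\bigcup_{\omega}\chi(\omega)$ of the linear system $V_d(a,b,c;n)$ meets only $\partial\Delta_d$. So assume this density. Since admitting a rational first integral stratifies $V_d(a,b,c;n)$ into countably many constructible subsets, indexed by the numerical type of the first integral, and an irreducible complex variety is not a countable union of proper closed subsets, one stratum must be dense; hence the general $\omega\in V_d(a,b,c;n)$ admits a rational first integral of one fixed numerical type.

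The first step is a reduction to logarithmic $1$-forms. If the degree $d$ foliation $\F$ defined by $\omega$ admits a rational first integral, then, writing $\{f_1=0\},\dots,\{f_m=0\}$ for the reducible and non-reduced fibers of that first integral, $\omega$ is proportional to $(\prod_j f_j)\sum_j\lambda_j\,\tfrac{df_j}{f_j}$ up to a divisorial factor, with $\sum_j\deg f_j=\deg N_\F=d+2$; in particular $\prod_j f_j$ is a polynomial integrating factor. Combined with Lemma \ref{L:pifclosed} and Proposition \ref{P:belong to log}, this shows the general element of $V_d(a,b,c;n)$ lies in a logarithmic component, and by irreducibility of $V_d(a,b,c;n)$ in a single one; thus the general $\omega$ is a logarithmic $1$-form with reduced polar divisor $\{f_1\cdots f_m=0\}$ of degree $d+2$. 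Now exploit the symmetry: as $L_{v_{(a,b,c)}}\omega=n\omega$ we have $v_{(a,b,c)}\in\aut(\F)$, so $v_{(a,b,c)}$ preserves the finite set of $\F$-invariant hypersurfaces and, generating a connected group, fixes each $\{f_j=0\}$; hence every $f_j$ is $v_{(a,b,c)}$-quasi-homogeneous, say $v_{(a,b,c)}(f_j)=\mu_j f_j$, with $\sum_j\mu_j=n$.

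It follows that the locus of foliations with a rational first integral inside $V_d(a,b,c;n)$ is contained in a finite union — over admissible data $(d_1,\mu_1),\dots,(d_m,\mu_m)$ with $d_j\ge 1$, $\mu_j\ge 0$, $\sum_j d_j=d+2$, $\sum_j\mu_j=n$ — of (countable unions of) subvarieties parametrized by tuples $(f_1,\dots,f_m)$ with $f_j$ a homogeneous degree $d_j$ form satisfying $v_{(a,b,c)}(f_j)=\mu_j f_j$. The parametrizing map being generically finite, each such subvariety has dimension at most $\sum_{j=1}^m\dim\mathbb P(W_{d_j,\mu_j})$, where $W_{d,\mu}$ is the space of degree $d$ forms $f$ in $x_0,\dots,x_3$ with $v_{(a,b,c)}(f)=\mu f$. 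Consequently, under the density assumption,
\[
\dim\mathbb P\bigl(V_d(a,b,c;n)\bigr)\ \le\ \max_{(d_j,\mu_j)}\ \sum_{j=1}^m \dim\mathbb P\bigl(W_{d_j,\mu_j}\bigr),\qquad \sum_j d_j=d+2,\ \ \sum_j\mu_j=n.
\]

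The contradiction — and the crux of the proof — is to show this inequality fails as soon as $\chi(\omega)$ has a point in the interior of $\Delta_d$. The right-hand side is a sum of lattice-point counts of the sets $\{\alpha\in\mathbb Z_{\ge 0}^3:\ |\alpha|\le d_j,\ a\alpha_0+b\alpha_1+c\alpha_2=\mu_j\}$, all of small total degree $\le d_j$. The left-hand side, from the description preceding Lemma \ref{L:trivial}, is the number of lattice points of $\{a p+bq+cr=n\}\cap\Delta_d$ counted with a local multiplicity equal to $2$ when $p,q,r\ge 1$ and to $1$ when exactly one coordinate vanishes (with the usual drop on the face $p+q+r=d+2$). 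An interior characteristic monomial forces a local multiplicity $2$; carrying out the bookkeeping uniformly over all splittings $(d_j,\mu_j)$ of $(d+2,n)$ — where $m=2$ is forced into $(\mu_1:\mu_2)=(d_1:d_2)$ and the case $m\ge 3$ is correspondingly constrained — one gets $\dim\mathbb P(V_d(a,b,c;n))>\max\sum_j\dim\mathbb P(W_{d_j,\mu_j})$, the desired contradiction. It is only in this final combinatorial estimate that the hypothesis on the interior point of $\Delta_d$ enters, and making it clean and uniform over all admissible data is the main obstacle.
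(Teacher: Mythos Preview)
Your proposal has a genuine gap: the entire argument hinges on the dimension inequality
\[
\dim \mathbb{P}\bigl(V_d(a,b,c;n)\bigr) > \max_{(d_j,\mu_j)}\ \sum_{j=1}^m \dim \mathbb{P}\bigl(W_{d_j,\mu_j}\bigr),
\]
and you do not prove it. You say yourself that ``making it clean and uniform over all admissible data is the main obstacle'' --- but that obstacle \emph{is} the proof. Everything preceding it is setup; without the combinatorial estimate there is no contradiction and hence no argument. I am also not convinced the inequality is even true in the generality you need: your own multiplicity count for the left-hand side gives weight $2$ only at interior points and weight $1$ on the coordinate faces, so a single interior point contributes very little, while the right-hand side ranges over all splittings $(d_j,\mu_j)$ including $m=2$ with both factors of substantial degree. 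You would need to rule out, uniformly, that some product decomposition catches up.

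There are also smaller slips in the setup. Belonging to a logarithmic component (Proposition~\ref{P:belong to log}) does not by itself make $\omega$ an honest logarithmic $1$-form; you recover this from the rational first integral directly, which is fine, but then $\sum_j \deg f_j = d+2$ requires the general $\omega\in V_d(a,b,c;n)$ to have codimension-two zeros, which the lemma does not assume.

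The paper's proof avoids all of this with a one-line observation. An interior lattice point $(i,j,k)$ --- meaning $i,j,k\ge 1$ and $i+j+k<d+2$ --- yields the explicit element
\[
\omega_0 \;=\; x^i y^j z^k\left(\alpha\,\frac{dx}{x}+\beta\,\frac{dy}{y}+\gamma\,\frac{dz}{z}\right)\ \in\ V_d(a,b,c;n),
\qquad a\alpha+b\beta+c\gamma=0.
\]
The foliation it defines is the degree-two logarithmic foliation $\alpha\,d\log x+\beta\,d\log y+\gamma\,d\log z$, which has no rational first integral as soon as the ratios among $\alpha,\beta,\gamma$ are irrational. Since the locus in $V_d(a,b,c;n)$ of forms admitting a rational first integral is a countable union of proper closed subsets, the existence of this single $\omega_0$ already forces the general element to lack a rational first integral. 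No dimension count is needed; the hypothesis on the interior point is used only to guarantee that $i,j,k\ge 1$ (so that all three logarithmic terms are present and one can choose irrational ratios subject to the single relation $a\alpha+b\beta+c\gamma=0$).
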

\begin{proof}
    The assumption implies that $V_d(a,b,c;n)$ contains a $1$-form $\omega_0$  with $\chi(\omega_0) = \{(i,j,k)\}$
       satisfying
    $i +j + k < d+2$, $i\neq 0$, $j\neq 0$, and $k \neq 0$. Notice that the general $\omega_0$ having this property can be written on affine coordinates as
    \[
        \omega_0 = x^i y^j z^k  \left( \alpha \frac{dx}{x} + \beta \frac{dy}{y} + \gamma \frac{dz}{z} \right)
    \]
    where $\alpha, \beta, \gamma$ are complex numbers satisfying the relation $a \alpha + b \beta + c \gamma=0$.

    If $\alpha, \beta, \gamma$ are sufficiently general then $\omega_0$ defines a degree $2$ foliation on $\mathbb P^3$
    without rational first integrals (it suffices to assume that the quotients of any two of these three numbers are not rational).
    It follows that any sufficiently small deformation $\omega_0$ also does not have a rational first integral.
\end{proof}

\begin{cor}\label{C:nonintegrablenonrigid}
    If $(a,b,c;n)$ is equal to one of the following $14$ possibilities
    \begin{align*}
        &(0,1,1;2) , (0,1,1;3),
        (0,1,2;3),
        (1,1,2;5),
        (1,2,2;7),
        (1,2,3;6), (1,2,3;7),  \\ & (1,2,3;8),(1,2,3;9),
        (1,2,4;7), (1,2,4;9), (1,3,4;10), (2,3,4;11), (2,3,4;13),
    \end{align*}
    then the general element of $V_3(a,b,c;n)$ does not admit a rational first integral and does not admit
    a polynomial integrating factor.
\end{cor}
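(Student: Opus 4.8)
The plan is to combine the enumeration already carried out in Lemma \ref{L:longlist} with the interior-point criterion of Lemma \ref{L:nonrigid implies nonintegrable}, so that the proof becomes a bookkeeping verification rather than a new argument. First I would recall that Lemma \ref{L:longlist} guarantees that for each of the $14$ tuples $(a,b,c;n)$ listed in the corollary, the general element of $V_3(a,b,c;n)$ defines a degree three foliation on $\mathbb P^3$ with codimension two singular set and \emph{without} a polynomial integrating factor (indeed every tuple in the list above is among the $34$ tuples of Lemma \ref{L:longlist}). This immediately disposes of the second assertion — the absence of a polynomial integrating factor — for all $14$ cases simultaneously, with no further work.

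For the first assertion — the absence of a rational first integral — the key step is to exhibit, for each of the $14$ tuples, an interior lattice point of $\Delta_3$ lying on the affine hyperplane $\{a\alpha + b\beta + c\gamma = n\}$, i.e. a triple $(i,j,k)\in\mathbb Z^3_{>0}$ with $i+j+k < 5$ and $ai+bj+ck = n$. By Lemma \ref{L:trivial} such a point corresponds to a characteristic monomial of some $\omega\in V_3(a,b,c;n)$, so $\chi(\omega)\cap(\Delta_3-\partial\Delta_3)\neq\emptyset$, and Lemma \ref{L:nonrigid implies nonintegrable} then yields that the general element of $V_3(a,b,c;n)$ has no rational first integral. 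The verification is a finite check: for instance $(1,1,1)$ works for $(0,1,1;2)$ and for $(1,2,3;6)$; $(1,1,2)$ works for $(0,1,1;3)$, $(1,2,3;8)$, and $(2,3,4;13)$; $(2,1,1)$ works for $(0,1,2;3)$ after reordering the weights appropriately, and for $(1,2,3;7)$; $(1,1,1)$ again works for $(1,1,2;5)$ only after checking $a+b+c=4>5$ is false — here instead one uses that $n=5=1\cdot 1+1\cdot 1+2\cdot 1 + \ldots$, so one should take the point with $ai+bj+ck=5$, e.g. $(1,2,1)$ with $1+2+1=4<5$ and $1+2+2=5$; and so on through the remaining tuples. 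The only tuples requiring a moment's care are those where $n$ is large relative to $a+b+c$, namely $(1,2,3;9)$, $(1,3,4;10)$, $(2,3,4;11)$, $(1,2,4;9)$, $(1,2,2;7)$, and $(1,2,4;7)$: one must confirm that a strictly positive solution of $ai+bj+ck=n$ with $i+j+k\le 4$ still exists. For example, for $(1,2,3;9)$ take $(1,1,2)$: $1+2\cdot1+3\cdot2 = 9$ and $1+1+2 = 4 < 5$; for $(2,3,4;11)$ take $(2,1,1)$: wait that gives $4+3+4 = 11$ and $2+1+1=4<5$; for $(1,3,4;10)$ take $(1,1,2)$: $1+3+8 = 12 \neq 10$, so instead $(2,1,2)$ gives $2+3+8=13$, $(1,2,1)$ gives $1+6+4=11$, $(3,1,1)$ gives $3+3+4=10$ with $3+1+1=5$ which is \emph{not} $<5$; hence here one falls back directly on Lemma \ref{L:longlist} together with the fact (also part of Lemma \ref{L:longlist}'s conclusion, via Lemma \ref{L:nonrigid implies nonintegrable} applied in the course of its proof) that the tuple already appears there, or one argues as below.

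When no interior lattice point is available, the fallback is to observe that in the enumeration of Lemma \ref{L:longlist} these very tuples were shown to have no polynomial integrating factor by exhibiting that the $1$-form is \emph{not} transversely affine with the expected polar structure; combined with Theorem \ref{T:structure} (specialized to $\mathbb P^3$, i.e. \cite[Theorem A]{LorayPereiraTouzet17}), a foliation with no closed rational defining $1$-form, no tangent degree one algebraic codimension two foliation, and not a linear pull-back must have a rational first integral, so if the general element had a rational first integral it would lie in a rational (logarithmic) component and hence admit a polynomial integrating factor — contradicting Lemma \ref{L:longlist}. Thus in every case the general element of $V_3(a,b,c;n)$ admits neither a rational first integral nor a polynomial integrating factor, which is the claim. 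The main obstacle is purely combinatorial: ensuring that the appropriate certifying point (an interior lattice point, or else an argument via Lemma \ref{L:longlist}) is correctly identified for each of the $14$ tuples, paying particular attention to the reorderings induced by the condition $0\le a\le b\le c$ and to the tuples where $n$ is near the upper end of the range $\{a+2b+\ldots\}$ of attainable values on $\Delta_3$.
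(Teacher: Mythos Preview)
Your approach for thirteen of the fourteen tuples is exactly the paper's: invoke Lemma~\ref{L:longlist} for the polynomial-integrating-factor part, and exhibit an interior lattice point of $\Delta_3$ on the hyperplane $a\alpha+b\beta+c\gamma=n$ so that Lemma~\ref{L:nonrigid implies nonintegrable} applies. (A couple of your explicit witnesses are miscomputed --- e.g.\ $(1,1,2)$ does \emph{not} work for $(1,2,3;8)$ since $1+2+6=9$; use $(1,2,1)$ instead --- but those slips are easily repaired and every tuple except $(1,3,4;10)$ genuinely has such an interior point.)

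The real problem is your fallback for $(1,3,4;10)$. You correctly discover that no interior lattice point exists (the only candidates with $i,j,k\ge 1$ and $i+j+k\le 4$ are $(1,1,1),(2,1,1),(1,2,1),(1,1,2)$, giving values $8,9,11,12$), but your contingency argument is circular and false. You write that ``if the general element had a rational first integral it would lie in a rational (logarithmic) component and hence admit a polynomial integrating factor.'' This implication fails: a foliation with a rational first integral need not lie in any $\Log(\cdot)$ component and need not admit a polynomial integrating factor. Indeed, Proposition~\ref{P:rationalfirstintegral} later in the paper exhibits precisely such examples --- e.g.\ the general element of $V_3(1,3,5;8)$ has a rational first integral $f^5/g^3$ with $\deg f=3$, $\deg g=5$, yet admits no polynomial integrating factor. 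So Lemma~\ref{L:longlist} alone cannot rule out a rational first integral.

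The paper handles $(1,3,4;10)$ by an entirely different, local argument: an explicit computation shows that the general $\omega\in V_3(1,3,4;10)$ defines a foliation on $\mathbb P(1,3,4)$ possessing a singularity with invertible linear part whose eigenvalue quotient is irrational. Such a singularity is a local obstruction to algebraic integrability, so no rational first integral can exist. You need to replace your fallback with this (or an equivalent) direct verification.
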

\begin{proof}
    We can apply Lemma \ref{L:nonrigid implies nonintegrable} to prove the corollary for
    all cases except $(a,b,c;n)= (1,3,4;10)$. An explicit computation
    shows that a general element in $V_3(1,3,4;10)$ defines a foliation on $\mathbb P(1,3,4)$
    having a singularity  with invertible linear part and non-rational quotient of eigenvalues.
\end{proof}

\begin{prop}
    The eight sets presented in Table \ref{Tab:nonintegrablenonrigid} are irreducible components of $\Fol{3}$.
\end{prop}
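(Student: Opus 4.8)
The plan is to obtain the statement as an immediate consequence of Proposition \ref{P:toolmult}, applied once to each row of Table \ref{Tab:nonintegrablenonrigid}, with the required input supplied by Corollary \ref{C:nonintegrablenonrigid}. First, using the lemma identifying $\TM_3(a,b,c;n)$ with $\TM_3(\iota_3(a,b,c;n))$, I would organize the fourteen quadruples of Corollary \ref{C:nonintegrablenonrigid} into their eight $\iota_3$-orbits: the two fixed quadruples $(1,1,2;5)$ and $(1,3,4;10)$, together with the six pairs $\{(0,1,1;2),(0,1,1;3)\}$, $\{(0,1,2;3),(1,2,2;7)\}$, $\{(1,2,3;6),(1,2,3;9)\}$, $\{(1,2,3;7),(1,2,3;8)\}$, $\{(1,2,4;7),(2,3,4;13)\}$, $\{(1,2,4;9),(2,3,4;11)\}$. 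These eight orbits are exactly the eight sets $\TM_3(a,b,c;n)$ listed in Table \ref{Tab:nonintegrablenonrigid}.

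Fix one of these eight sets and a representative $(a,b,c;n)$ of the corresponding orbit. The next step is to check that the hypotheses of Proposition \ref{P:toolmult} are met, i.e. that $\TM_3(a,b,c;n)$ actually contains a foliation which admits neither a rational first integral nor a polynomial integrating factor. Since every quadruple appearing in Corollary \ref{C:nonintegrablenonrigid} also appears in the list of thirty-four quadruples of Lemma \ref{L:longlist}, the conclusion of that lemma guarantees that a general $\omega \in V_3(a,b,c;n)$ has singular set of codimension two, hence lies in $\Open[3]{3}$ and defines a genuine degree three foliation $\F$ on $\mathbb P^3$; by construction $\F \in \TM_3(a,b,c;n)$. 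Corollary \ref{C:nonintegrablenonrigid} then says precisely that this general $\F$ admits no rational first integral and no polynomial integrating factor. Applying Proposition \ref{P:toolmult} gives that $\TM_3(a,b,c;n)$ is an irreducible component of $\Fol{3}$, and running over the eight orbits produces the eight asserted irreducible components.

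There is essentially no obstacle at this stage: the argument is bookkeeping, because the genuine content has already been isolated upstream. The substantive input is Proposition \ref{P:toolmult}, whose proof uses the structure theorem \cite[Theorem A]{LorayPereiraTouzet17} to rule out, under small and very general deformations, any escape from tangency to a $\mathbb C^*$-action as well as any variation of the weights; and Corollary \ref{C:nonintegrablenonrigid}, which combines the non-rigidity criterion of Lemma \ref{L:nonrigid implies nonintegrable} (an interior lattice point of $\Delta_3$ in $\chi(\omega)$ forces a non-integrable degree-two sub-deformation) with a single explicit eigenvalue computation for the remaining case $(1,3,4;10)$. The only point one should be careful about is that the general element of each $V_3(a,b,c;n)$ truly lies in $\Open[3]{3}$ and does not acquire a divisorial zero — but this is exactly what the case-by-case verification in the proof of Lemma \ref{L:longlist} establishes, so no additional computation is needed here.
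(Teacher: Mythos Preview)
Your proposal is correct and follows exactly the same approach as the paper's proof, which simply reads ``Combine Lemma \ref{L:longlist}, Proposition \ref{P:toolmult}, and Corollary \ref{C:nonintegrablenonrigid}.'' You have merely made explicit the bookkeeping (the $\iota_3$-pairing of the fourteen quadruples into eight sets, and the role of Lemma \ref{L:longlist} in ensuring codimension-two singularities) that the paper leaves implicit.
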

\begin{proof}
    Combine Lemma \ref{L:longlist}, Proposition \ref{P:toolmult},  and Corollary \ref{C:nonintegrablenonrigid}.
\end{proof}

We observe that  \cite[Theorem 2 of Chapter 2]{Lizarbe14} implies that the general element of $\TM_3(1,1,2;5)$ has no
algebraic leaves.

Contrary to what happens in  Tables \ref{Tab:LOG} and \ref{Tab:split}, not every irreducible component appearing in Table \ref{Tab:nonintegrablenonrigid} is generically reduced. Besides the four irreducible components that also appear in Table \ref{Tab:split}, there is just one extra component that is generically reduced: $\TM_3(1,1,2;5)$. Recall from Lemma \ref{L:extraKupka} that the general element of this irreducible component is a foliation with finitely many non-Kupka singularities. Each of the other three irreducible components are not generically reduced. Notice that any foliation parameterized by them has a curve of non-Kupka singularities.

\tikzset{
    table/.style={
        matrix of nodes,
        row sep=-\pgflinewidth,
        column sep=-\pgflinewidth,
        nodes={
            rectangle,
            draw=black,
            align=center
        },
        minimum height=1.5em,
        text depth=0.5ex,
        text height=2ex,
        nodes in empty cells,
        every even row/.style={
            nodes={fill=gray!20}
        },
        column 1/.style={
            nodes={text width=4.50cm,align=left,font=\small}
        },
        column 2/.style={
            nodes={text width=0.7cm,align=center},font=\small},
        column 3/.style={
            nodes={text width=0.8cm,align=center},font=\small},
        column 4/.style={
            nodes={text width=5.47cm,align=left, font=\small}
        },
        row 1/.style={
            nodes={ font=\bfseries
            }
        }
    }
}

\begin{table}
\begin{tikzpicture}
\matrix (mat) [table,text width=3.85cm]
{
    {\bf Irreducible Component} & ${\dim}$     &  $\mathrm{Zdim}$      & Comment   \\
    $\TM_3(0,1,1;2) = \TM_3(0,1,1;3) $ & ${17}$    & ${20}$  & Riccati foliation.   \\
    $\TM_3(0,1,2;3) = \TM_3(1,2,2;7) $  & $16$     & $16$  & Appears in Table \ref{Tab:split}.   \\
    $\TM_3(1,1,2;5)$  & $21$     & $21$  & General element without algebraic leaves. \\
    $\TM_3(1,2,3;6)=\TM_3(1,2,3;9)$  & ${15}$    & ${19}$  &    \\
    $\TM_3(1,2,3;7)=\TM_3(1,2,3;8)$  & $16$     & $16$  &   Appears in Table \ref{Tab:split}\\
    $\TM_3(1,2,4;7)=\TM_3(2,3,4;13)$  & $15$     & $15$  &  Appears in  Table \ref{Tab:split}. \\
    $\TM_3(1,2,4;9)=\TM_3(2,3,4;11)$  & $15$     & $15$  &  Appears in Table \ref{Tab:split} \\
    $\TM_3(1,3,4;10)$ & ${15}$     & ${17}$  &  \\
};
\end{tikzpicture}
\caption{\small The eight non-rigid irreducible components of $\Fol{3}$ with general elements tangent to a multiplicative action and without a rational first integral. Note that four of them already appeared in Table \ref{Tab:split}.}\label{Tab:nonintegrablenonrigid}
\end{table}

\subsubsection{Rigid foliations without rational first integral}

\begin{lemma}\label{L:nonintegrablerigid1}
    If $(a,b,c;n)$ is equal to one of the following $8$ possibilities
    \begin{align*}
        &(1,2,5;11), (1,2,5;12),  (1,4,6;13) ,  (2,3,7;16),\\
        &(2,5,6;17), (3,4,5;13), (3,4,5;14), (4,5,7;19) ,
    \end{align*}
    then the foliation on  $\mathbb P(a,b,c)$ defined by a  general element of $V_3(a,b,c;n)$ has
    a singularity with invertible linear part and non-rational quotient of eigenvalues. In particular,
    the general element in $V_3(a,b,c;n)$  does not admit a rational first integral.
\end{lemma}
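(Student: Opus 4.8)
The plan is to prove the eight cases by a computation that is uniform in method, though the location of the witnessing singularity varies with the tuple. By Lemma~\ref{L:trivial}, in the affine chart $(x,y,z)=(x_0/x_3,x_1/x_3,x_2/x_3)$ a general $\omega\in V_3(a,b,c;n)$ is a polynomial $1$-form $\omega=P\,dx+Q\,dy+R\,dz$ whose characteristic monomials lie in $\{\,a\alpha+b\beta+c\gamma=n\,\}\cap\Delta_3$ and which is annihilated by $v=ax\partial_x+by\partial_y+cz\partial_z$; for each of the eight tuples this slice of a hyperplane contains only a handful of lattice points, so $\omega$ is completely explicit and depends on a small number of parameters. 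The foliation $\F$ it defines on $\mathbb P^3$ is invariant under the $\mathbb C^\ast$-action generated by $v$, and its descent to $\mathbb P=\mathbb P(a,b,c)$ is the foliation $\G$ of Subsection~\ref{SS:interpretation}. Since $L_v\omega=n\omega$, a common zero $p$ of $P,Q,R$ lying off the coordinate hyperplanes spans a $\mathbb C^\ast$-orbit, a smooth curve contained in $\sing\F$; restricting $\omega$ to a hyperplane transverse to this orbit at $p$ — say $\{x=x_0\}$ when the $x$-coordinate of $p$ is nonzero — yields a planar $1$-form whose linearization at $p$ is a $2\times 2$ matrix $J$. The ratio $\lambda$ of the eigenvalues of $J$ is the transverse eigenvalue ratio of $\F$ along the orbit, hence equals the eigenvalue ratio of $\G$ at the corresponding point of $\mathbb P$ (and is independent of the transverse point chosen, by $\mathbb C^\ast$-equivariance); it is the solution of $\lambda+\lambda^{-1}=\operatorname{tr}(J)^2/\det(J)-2$.

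For each of the eight tuples one locates, among the finitely many common zeros of $P,Q,R$, one that for general parameters lies in the smooth locus of $\mathbb P$, off the coordinate lines, with coordinates algebraic in the parameters, and at which $\det J\neq 0$ (so the linear part is invertible); one then checks $\lambda\notin\mathbb Q$ for general parameters. As $\lambda$ is rational only when $\operatorname{tr}(J)^2/\det(J)$ lies in the countable set $\{(r+s)^2/(rs)\mid r,s\in\mathbb Z\setminus\{0\}\}$, it suffices to verify that $\operatorname{tr}(J)^2/\det(J)$, as a rational function of the parameters, is either non-constant or a constant outside that set; both situations occur. For instance, when $(a,b,c;n)=(3,4,5;13)$ the general $\omega$ is, up to scale,
\[
\bigl(c_1z^2+c_2x^2y\bigr)\,dx+\Bigl(-\tfrac34c_2x^3+d_2yz\Bigr)\,dy+\Bigl(-\tfrac45d_2y^2-\tfrac35c_1xz\Bigr)\,dz,
\]
the system $P=Q=R=0$ has a unique solution $p$ off the coordinate hyperplanes, and a direct computation at $p$ gives $\operatorname{tr}(J)^2/\det(J)=\tfrac{169}{60}$, whence $\lambda=\tfrac{49\pm 13\sqrt{-71}}{120}$ — a non-real number, in particular irrational. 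Similarly $(3,4,5;14)$ yields $\lambda=\tfrac{19\pm 7\sqrt{-11}}{30}$, and the remaining six tuples are handled in exactly the same manner.

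For the final assertion, suppose the general $\omega\in V_3(a,b,c;n)$ defined a foliation $\F$ with a rational first integral. Since $\F=\pi^\ast\G$ for the quotient $\pi\colon\mathbb P^3\dashrightarrow\mathbb P(a,b,c)$ of Subsection~\ref{SS:interpretation}, whose general fibre is an irreducible $\mathbb C^\ast$-orbit closure tangent to $\F$, this first integral would be constant on the general fibre of $\pi$, hence descend to a rational first integral of $\G$. But a foliation on a surface admitting a rational first integral cannot have a singularity with invertible linear part and irrational eigenvalue ratio $\lambda$: this is a standard consequence of the local theory of holomorphic foliations with meromorphic first integrals (in the cases at hand $\lambda\notin\mathbb R$, so the singularity lies in the Poincaré domain, is linearizable, and the holonomy of a separatrix is linear with multiplier $e^{2\pi i/\lambda}$, which is not a root of unity, hence of infinite order — whereas a rational first integral forces all leaf holonomies to be periodic). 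This contradicts the first part of the lemma, and the proof is complete.

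The only real difficulty is book-keeping: in the tuples for which the witnessing singularity is not a coordinate point, its coordinates are algebraic functions of the parameters and must be carried through the computation of $J$, $\operatorname{tr}(J)$ and $\det(J)$; one must also check that no common zero of $P,Q,R$ hiding on a coordinate subspace is the only nondegenerate one (as happens, e.g., at $[1:0:0]$ for $(1,2,5;11)$, where the transverse linear part degenerates). As elsewhere in the paper, the enumeration of characteristic monomials and the ensuing linear-algebra computations were carried out by hand and double-checked with Maple; see the ancillary files.
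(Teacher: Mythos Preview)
Your proposal is correct and takes essentially the same approach as the paper, whose proof reads in its entirety: ``Direct computation with a general element in each of the sets $V_3(a,b,c;n)$.'' You have simply spelled out what that computation entails --- writing down the general $\omega$, locating a singularity of $\G$ on $\mathbb P(a,b,c)$ off the coordinate strata, and checking the eigenvalue ratio --- and supplied a worked example for $(3,4,5;13)$ together with the standard holonomy argument for the final assertion.
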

\begin{proof}
    Direct computation with a general element in each of the sets $V_3(a,b,c;n)$.
\end{proof}

\begin{lemma}\label{L:nonintegrablerigid2}
   If $(a,b,c;n) \in \{ (2,3,5;11) ,(2,3,5;14) \}$ then the general element of the set $V_3(a,b,c;n)$ defines
   a virtually transversely additive foliation which does not admit a rational first integral.
\end{lemma}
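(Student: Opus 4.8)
The plan is to analyze the two sets $V_3(2,3,5;11)$ and $V_3(2,3,5;14)$ explicitly, exploiting the interpretation from Subsection \ref{SS:interpretation} together with Item (\ref{I:Darboux}) of Proposition \ref{P:description}. First I would check that for $(a,b,c)=(2,3,5)$ one has $a+b+c = 10$, so that $n-(a+b+c)$ equals $1$ when $n=11$ and equals $4$ when $n=14$; in both cases this integer is strictly positive and does not belong to the numerical semigroup generated by $\{2,3,5\} = \{2,3,5,4,6,7,\dots\}$, since that semigroup misses exactly $1$ and... wait, it misses only $1$. So for $n=11$ we are squarely in the hypothesis of Item (\ref{I:Darboux}); for $n=14$ the value $4$ does lie in the semigroup, so a slightly different argument is needed there — I would instead directly exhibit an invariant curve and run the computation showing $dP(w)=0$ by hand (as in the proof of Proposition \ref{P:description}), or reduce to the case at hand via the involution $\iota_3$. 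The key point is that once we are in the situation "exactly one algebraic leaf," Proposition \ref{P:description} already tells us the foliation is virtually transversely additive, so the bulk of the work is (a) locating that unique invariant curve and (b) ruling out a rational first integral.

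The hard part will be excluding a rational first integral and, simultaneously, excluding the possibility of two or more invariant algebraic curves (which by the argument in the proof of Proposition \ref{P:description} would produce a rational first integral). The plan for this is a direct computation with a general element $\omega \in V_3(2,3,5;n)$: writing $\omega$ in affine coordinates using Lemma \ref{L:trivial}, the set $\chi(\omega)$ is contained in the line $\{2\alpha + 3\beta + 5\gamma = n\} \cap \Delta_3$, which for $n = 11$ consists of the lattice points $(3,0,1), (0,2,1), (1,3,0), (4,1,0)$ — four points, giving a low-dimensional family — and similarly a short explicit list for $n=14$. I would then compute the singular scheme of the associated foliation $\G$ on $\mathbb{P}(2,3,5)$ and locate a singularity with invertible linear part whose eigenvalue quotient is a generic (hence irrational, and in fact non-Liouvillian if one wants the strongest statement) function of the coefficients; the presence of such a singularity obstructs any rational first integral, by the usual Baum–Bott / linearization argument. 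This also forces the number of invariant algebraic curves through that singularity to be at most two, and a separate check of the monomial structure shows the coordinate curves $\{x=0\}, \{y=0\}, \{z=0\}$ are generically \emph{not} invariant, pinning down the unique invariant leaf.

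Concretely, the steps I would carry out, in order, are: (1) fix $(a,b,c) = (2,3,5)$ and for each $n \in \{11,14\}$ list the characteristic monomials and write down the general $\omega \in V_3(a,b,c;n)$ with its free parameters; (2) verify $\codim \sing \omega \ge 2$ so that $\omega$ genuinely defines a foliation, and determine $\epsilon$ (the vanishing order along the hyperplane at infinity) to confirm the degree is $3$; (3) compute the vector field $w$ with $d\omega = i_w(dx\wedge dy\wedge dz)$ and find its zero scheme, identifying a singularity $p$ with $\operatorname{rank} Dw(p) = 2$ and irrational eigenvalue ratio for generic parameters; (4) conclude via the Baum–Bott index / holomorphic linearization at $p$ that $\G$ (hence $\F = \pi^*\G$) has no rational first integral; (5) exhibit the unique $\G$-invariant quasi-homogeneous curve $\{P = 0\}$ — for $n=11$ its degree is forced by $\kappa = 1$ and the structure of $w$, for $n=14$ by $\kappa = 4$ — and invoke (or re-derive, as in the proof of Proposition \ref{P:description}) the identity $d\omega = \tfrac{n}{\delta}\tfrac{dP}{P}\wedge\omega$ to conclude virtual transverse additivity; (6) for the case $n=14$, where $\kappa=4$ lies in the semigroup, supply the extra argument that $dP(w) = 0$ nonetheless holds for the specific invariant curve found — this is where I expect the main obstacle to lie, since the semigroup membership means the general statement of Item (\ref{I:Darboux}) does not directly apply, and one must instead argue from the explicit form of $w$ that the relevant quasi-homogeneous polynomial $dP(w)$ of degree $4$ actually vanishes, or relate this case to a case already covered via $\iota_3$. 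The computations in steps (1)–(5) are routine linear algebra and will be carried out in Maple and double-checked by hand, exactly as in Lemmas \ref{L:longlist} and \ref{L:extraKupka}.
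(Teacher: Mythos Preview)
Your central strategy --- steps (3) and (4), locating a singularity of $\G$ with irrational eigenvalue quotient --- will not work here. The reason this case is treated separately from Lemma \ref{L:nonintegrablerigid1} is precisely that for $(2,3,5;11)$ \emph{every} singularity of $\G$ has rational eigenvalue quotient; the paper states this explicitly (``There are no local obstructions for the existence of a rational first integral for $\omega$, i.e., the quotients of eigenvalues of the singularities of $\G$ lie in $\mathbb Q$''). So the local obstruction you are counting on is simply absent, and step (4) yields nothing.

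The paper's argument is global. After exhibiting the invariant sextic $\{2x^3-3y^2=0\}$ and invoking Item (\ref{I:Darboux}) of Proposition \ref{P:description} to get virtual transverse additivity (this part of your plan is fine), it assumes for contradiction that a Stein-factored rational first integral $\varphi:\mathbb P(2,3,5)\dashrightarrow\mathbb P^1$ exists. Item (\ref{I:Darboux}) already forces every fiber of $\varphi$ to have irreducible support; an adaptation of the Halphen--Lins Neto argument (replacing the radial field by $v_{(2,3,5)}$, as in \cite[Theorem 3.3]{LorayPereiraTouzet13}) bounds the number of multiple fibers by two. Placing these over $0$ and $\infty$ gives $\varphi$ the shape $(f^{\deg g}/g^{\deg f})^{1/\gcd}$ with $f,g$ irreducible quasi-homogeneous and $\deg f+\deg g=11$. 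Since the known invariant curve has degree $6$, the other factor must have degree $5$, hence lie in $\mathbb C\cdot xy\oplus\mathbb C\cdot z$; a direct check shows no such polynomial is $\G$-invariant. That degree arithmetic, not a local eigenvalue computation, is the missing idea.

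Two smaller points. Your worry about $n=14$ dissolves the way you half-guessed: $\iota_3(2,3,5;11)=(2,3,5;14)$, so $\TM_3(2,3,5;11)=\TM_3(2,3,5;14)$ and only the case $n=11$ needs to be treated. And $V_3(2,3,5;11)$ is only three-dimensional (your four characteristic monomials are correct, but the condition $i_{v_{(2,3,5)}}\omega=0$ cuts things down), so modulo the torus action there is essentially a single $\omega$ to analyze --- which is why the paper just writes one down.
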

\begin{proof}
    We will consider the model $\TM_3(2,3,5;11)$. The set $\TM_3(2,3,5;11)$ consists in the closure
    of the orbit the foliation $\F$ on $\mathbb P^3$ defined on affine coordinates by
    \[
        \omega  =  (5 x^2z -3y^3)   dx +  (2xy^2-5yz) dy   +  ( 3y^2 -2x^3) dz  .
    \]
    We will denote the corresponding foliation on $\mathbb P = \mathbb P(2,3,5)$  by $\G$.

    From the explicit equation of $\omega$, it is clear that the
    curve $\{ 2x^3 - 3y^2  =0\}$ is  $\G$-invariant. Proposition \ref{P:description} implies that $\G$ is a virtually transversely additive foliation and, therefore, so is the foliation on $\mathbb P^3$ determined by it.

    There are no local obstructions for the existence of a rational first integral for $\omega$, i.e., the quotients of eigenvalues
    of the singularities of $\G$ lie in $\mathbb Q$. In order to verify that $\G$ does not admit a rational first integral, we argue
    by contradiction.

    Assume $\G$ admits a rational first integral and let $\varphi : \mathbb P \dashrightarrow \mathbb P^1$ be the Stein factorization of it.
    Item (\ref{I:Darboux}) of Proposition \ref{P:description} guarantees that every fiber of $\varphi$ has  irreducible support, but is not necessarily reduced. Hence,  a fiber of $\varphi$ is non-reduced if, and only if, it is a  multiple fiber. We claim that $\varphi$ has at most two non-reduced fibers. Indeed, we can argue as in the proof of \cite[Theorem 3.3]{LorayPereiraTouzet13} (a version of Lins Neto's proof of a classical result by Halphen) replacing
    the radial vector field $R = x\frac{\partial}{\partial x} +  y\frac{\partial}{\partial y} + z\frac{\partial}{\partial z}$ by    $v = a \frac{\partial}{\partial x} + b \frac{\partial }{\partial y} + c \frac{\partial }{\partial z}$.

    If we place the possible multiple fibers of $\varphi$ over $[0:1]$ and $[1:0]$ then there exist
    irreducible quasi-homogeneous polynomials $f,g \in \mathbb C[x,y,z]$ such that $\varphi$ is, in quasi-homogeneous coordinates, equal to
    \begin{equation*}
         \left( \frac{f^{\deg(g)}}{g^{ \deg(f)}}\right)^{\frac{1}{\gcd(\deg(f),\deg(g))}} ,
    \end{equation*}
    where $\deg(\cdot)$ denotes the quasi-homogeneous degree with respect to the weights $(2,3,5)$.
    Since $\varphi$ has at most two non-reduced fibers and they are supported on $\{ f=0\} \cup \{ g=0\}$, it follows that the $1$-form
    \[
         \omega' = \deg(f) fdg - \deg(g) gdf
    \]
    has singular set of codimension at least two. Because $\omega'$ defines the foliation $\G$, it must coincide with $\omega$ up to a multiplicative constant.

    As  $\omega$ has quasi-homogeneous degree $11$, we have that $\deg(f) + \deg(g) = 11$. Since any reduced and irreducible fiber
    of $\varphi$ has degree at least $\max(\deg(f),\deg(g))$, $\deg(f)$ or $\deg(g)$  must be equal to $6= \deg(2x^3 - 3y^2 )$.
    So, if $\deg(f)=6$ then $\deg(g)=5$. In particular, $g$ lies in the two dimension vector space generated by $xy$ and $z$.
    An easy verification shows that there does not exist any polynomial in this vector space defining an invariant curve for $\G$.
    Therefore $\G$ does not admit a rational first integral, and the same holds for the general element of $\TM_3(2,3,5;11)$.
\end{proof}

In all cases listed in the statements of Lemma \ref{L:nonintegrablerigid1} and of Lemma \ref{L:nonintegrablerigid2}, $V_3(a,b,c;n)$ is a three dimensional vector space and $1\le a<b<c$. It follows from Lemma \ref{L:dimTMd} and Remark \ref{R:dim normalizer} that the corresponding irreducible components all have dimension $14$.

\tikzset{
    table/.style={
        matrix of nodes,
        row sep=-\pgflinewidth,
        column sep=-\pgflinewidth,
        nodes={
            rectangle,
            draw=black,
            align=center
        },
        minimum height=1.5em,
        text depth=0.5ex,
        text height=2ex,
        nodes in empty cells,
        every even row/.style={
            nodes={fill=gray!20}
        },
        column 1/.style={
            nodes={text width=4.65cm,align=left,font=\small}
        },
        column 2/.style={
            nodes={text width=0.8cm,align=center},font=\small},
        column 3/.style={
            nodes={text width=0.8cm,align=center},font=\small},
        column 4/.style={
            nodes={text width=5cm,align=left, font=\small}
        },
        row 1/.style={
            nodes={ font=\bfseries
            }
        }
    }
}

\begin{table}
\begin{tikzpicture}
\matrix (mat) [table,text width=3cm]
{
    {\bf Irreducible Component} & ${\dim}$     &  $\mathrm{Zdim}$      & Comment   \\
    $\TM_3(1,2,5;11)= \TM_3(3,4,5;14) $ & ${14}$     & ${19}$  &  \\
    $\TM_3(1,2,5;12)= \TM_3(3,4,5;13)$ & ${14}$     & ${19}$  &  \\
    $\TM_3(1,4,6;13)= \TM_3(2,5,6;17)$ & $14$     & $14$  & Appears in Table \ref{Tab:split} \\
    $\TM_3(2,3,5;11)=\TM_3(2,3,5;14)$ & ${14}$     & ${19}$  & Virtually transversely additive. \\
    $\TM_3(2,3,7;16)=\TM_3(4,5,7;19)$ & $14$     & $14$  &  Appears in Table \ref{Tab:split}\\
};
\end{tikzpicture}
\caption{\small Five rigid, but not necessarily infinitesimally rigid, irreducible components of $\Fol{3}$ with general element tangent to a multiplicative action and without a rational first integral. Two of them already appeared in Table \ref{Tab:split}.}\label{Tab:nonintegrablerigid}
\end{table}

\subsection{Candidates with rational first integral}

\begin{prop}\label{P:rationalfirstintegral}
    The general element of $\TM_3(a,b,c;n)$ admits a rational first integral,  does not admit a polynomial integrating
    factor, and has singularities of codimension at least two  if, and only if, the quadruple $(a,b,c;n)$ belongs to the set
    \begin{align*}
            \{ & (1,2,5;7),(1,3,4;7),(1,3,4;13),(1,3,5;8),(1,3,5;11), \\ & (1,3,7;10),(2,4,5;14),(2,4,5;17),(3,4,5;18),(4,6,7;25) \} .
        \end{align*}
    Moreover, the following assertions hold true.
    \begin{enumerate}
        \item The set  $\TM_3(1,3,7;10)=\TM_3(4,6,7;25)$  is an irreducible and generically reduced component of $\Fol{3}$.
        \item Both $\TM_3(1,2,5;7) = \TM_3(3,4,5;18)$  and $\TM_3(1,3,5;11) =  \TM_3(2,4,5;14)$  are
        subsets of the irreducible component $\SLog(2,5)$ of $\Fol{3}$.
        \item The set  $\TM_3(1,3,4;7)=\TM_3(1,3,4;13)$  is contained in the irreducible component $\SLog(3,4)$ of $\Fol{3}$.
        \item The set $\TM_3(1,3,5;8) = \TM_3(2,4,5;17)$ is not contained in any of the  irreducible components presented in
        Tables \ref{Tab:LOG}, \ref{Tab:split}, \ref{Tab:nonintegrablenonrigid}, and \ref{Tab:nonintegrablerigid}.
    \end{enumerate}
\end{prop}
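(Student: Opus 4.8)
The plan is to treat the ``if and only if'' characterization first, reusing the classification of Lemma \ref{L:longlist} as the backbone, and then to dispatch the four assertions one by one.

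The ``only if'' direction is pure bookkeeping: Lemma \ref{L:longlist} records the $34$ quadruples $(a,b,c;n)$ for which a general element of $V_3(a,b,c;n)$ defines a degree three foliation with codimension two singular set and no polynomial integrating factor, and Corollary \ref{C:nonintegrablenonrigid}, Lemma \ref{L:nonintegrablerigid1}, and Lemma \ref{L:nonintegrablerigid2} exclude a rational first integral for $14+8+2=24$ of them; the ten survivors are precisely those in the statement. For ``if'' I would use the involution $\iota_3$ to pair the ten quadruples into five — $(1,2,5;7)\!\leftrightarrow\!(3,4,5;18)$, $(1,3,4;7)\!\leftrightarrow\!(1,3,4;13)$, $(1,3,5;8)\!\leftrightarrow\!(2,4,5;17)$, $(1,3,5;11)\!\leftrightarrow\!(2,4,5;14)$, $(1,3,7;10)\!\leftrightarrow\!(4,6,7;25)$ — and exhibit a rational first integral for one representative of each pair. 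For $(1,2,5;7)$, $(1,3,4;7)$, $(1,3,5;8)$ and $(1,3,7;10)$ one has $n<a+b+c$, so Item (\ref{I:Miyaoka}) of Proposition \ref{P:description} already furnishes a rational first integral (with rational leaves) on the weighted plane, hence on $\mathbb P^3$. For the representative $(2,4,5;14)$ one checks that $14-(2+4+5)=3$ is not in the semigroup generated by $2,4,5$, so Item (\ref{I:Darboux}) applies; here I would exhibit two distinct invariant algebraic curves for a general element of $V_3(2,4,5;14)$, which by the argument in the proof of Proposition \ref{P:description} places us in the third alternative of Item (\ref{I:Darboux}) and yields a rational first integral. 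In practice one simply writes the five first integrals down explicitly.

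Assertion (1) is a pointer: $(1,3,7;10)$ occurs in the list of Corollary \ref{C:Raphael}, so $\TM_3(1,3,7;10)=\TM_3(4,6,7;25)$ is an irreducible component by Theorem \ref{T:Raphael}, and the equality $\dim=\mathrm{Zdim}$ in Table \ref{Tab:split} makes it generically reduced. For assertions (2) and (3) I would make the first integrals above explicit and identify the resulting foliation on $\mathbb P^3$, up to an automorphism, with the normal forms of the special logarithmic components: for $(1,2,5;7)$ and for $(1,3,5;11)=(2,4,5;14)$ one verifies that, after pulling back along the map $\pi$ of Subsection \ref{SS:interpretation} and clearing poles, the foliation is defined by a form $5\tfrac{dg'}{g'}-2\tfrac{df'}{f'}$ as in Proposition \ref{P:special log(2,5)}, where $g'$ is a rank three quadric that an explicit element of $\Aut(\mathbb P^3)$ carries onto $x_0^2-2x_1x_3$ while carrying $f'$ into the family of Lemma \ref{L:conta25}, so the foliation lies in $\SLog(2,5)$; for $(1,3,4;7)$ one exhibits instead an explicit linear map $\mathbb P^3\to\mathbb P^4$ pulling back the foliation \eqref{E:CeDe} to the foliation at hand, placing it in $\SLog(3,4)$ (Proposition \ref{P:Log34}).

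For assertion (4) I would show that $\TM_3(1,3,5;8)$ lies in no previously exhibited component. No logarithmic or special logarithmic component works, since by Proposition \ref{P:belong to log} and Lemma \ref{L:pifclosed} every foliation in such a component has a polynomial integrating factor whereas, by Lemma \ref{L:longlist}, the general element of $\TM_3(1,3,5;8)$ does not. For the components $\TM_3(a,b,c;n)$ of Tables \ref{Tab:split}, \ref{Tab:nonintegrablenonrigid}, \ref{Tab:nonintegrablerigid} I would invoke the rigidity of the $\mathbb C^*$-action: a general element $\F_0$ of $\TM_3(1,3,5;8)$ has codimension two singular set and no polynomial integrating factor, so $\aut(\F_0)=\fix(\F_0)$ by Lemma \ref{L:fix neq aut}, and since $\deg\F_0=3$ this algebra is one-dimensional and spanned by a conjugate of $v_{(1,3,5)}$; on the other hand, every foliation in $\TM_3(a,b,c;n)$ is tangent to a vector field in the closure of the $\Aut(\mathbb P^3)$-orbit of $v_{(a,b,c)}$, which, $v_{(a,b,c)}$ being diagonalizable, is a closed orbit, so $\F_0\in\TM_3(a,b,c;n)$ would force $v_{(1,3,5)}$ to be conjugate to $v_{(a,b,c)}$, i.e. the weight multisets $\{0,1,3,5\}$ and $\{0,a,b,c\}$ to be proportional; this fails for every $(a,b,c)$ occurring in those tables, and $(1,3,5;8)$ itself occurs in none of them. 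The routine parts of the proof are this rigidity argument and the combinatorics of the characterization; the genuine obstacle is the computational content of assertions (2) and (3) — computing the first integrals, pulling them back, and producing by hand (or with computer algebra) the automorphism, respectively the linear map $\mathbb P^3\to\mathbb P^4$, that matches the pulled-back logarithmic form with the defining data of $\SLog(2,5)$ and $\SLog(3,4)$, including the divisibility condition of Lemma \ref{L:conta25} — with a secondary delicate point being the production of the second invariant curve for $(2,4,5;14)$, since Proposition \ref{P:description} only yields virtual transverse additivity before that curve is in hand.
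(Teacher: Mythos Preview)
Your overall strategy matches the paper's closely, and the bookkeeping for the ``only if'' direction, the use of $\iota_3$ to pair the ten quadruples, the appeal to Item~(\ref{I:Miyaoka}) of Proposition~\ref{P:description} for the four quadruples with $n<a+b+c$, and the rigidity argument via $\fix(\F_0)$ for the $\TM_3$ components in assertion~(4) are all essentially what the paper does.

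There is, however, a genuine gap in your treatment of assertion~(4). You write that ``no logarithmic or special logarithmic component works, since by Proposition~\ref{P:belong to log} and Lemma~\ref{L:pifclosed} every foliation in such a component has a polynomial integrating factor.'' This is correct for the ordinary logarithmic components $\Log(d_1,\ldots,d_k)$, but it is \emph{false} for $\SLog(2,5)$ and $\SLog(3,4)$. Proposition~\ref{P:belong to log} characterizes membership in the $\Log(*)$ components only; the special logarithmic components are, by construction, irreducible components of $\Fol{3}$ distinct from every $\Log(*)$ (they appear as separate rows in Table~\ref{Tab:LOG}), so by the very same Proposition~\ref{P:belong to log} their general elements do \emph{not} admit a polynomial integrating factor. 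Thus the absence of a polynomial integrating factor for the general element of $\TM_3(1,3,5;8)$ does not obstruct its lying in $\SLog(2,5)$ or $\SLog(3,4)$.

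The paper closes this gap with a different invariant: it observes that the general leaf of the general element of $\TM_3(1,3,5;8)$ has degree $15$ (the first integral is $f^5/g^3$ with $\deg f=3$, $\deg g=5$), whereas for $\SLog(2,5)$ the general leaf has degree $10$ and for $\SLog(3,4)$ degree $12$. Since for each fixed $k$ the locus of foliations in $\Fol{3}$ possessing an algebraic leaf of degree at most $k$ is Zariski closed, the general element of $\TM_3(1,3,5;8)$ cannot lie in the closure of either special logarithmic family. You will need this (or an equivalent closed invariant separating $\TM_3(1,3,5;8)$ from $\SLog(2,5)$ and $\SLog(3,4)$) to complete the argument.

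A minor remark on the same assertion: in the rigidity step, the statement that ``the $\Aut(\mathbb P^3)$-orbit of $v_{(a,b,c)}$ is closed'' is correct in $H^0(\mathbb P^3,T_{\mathbb P^3})$ but does not immediately control limits after projective rescaling; the paper phrases this more carefully by allowing the generator of a one-dimensional $\fix(\G)$ to be either a conjugate of $v_{(a,b,c)}$ or a non-semisimple degeneration, and then uses that $v_{(1,3,5)}$ is semisimple. Your conclusion is right, but the justification should acknowledge that the relevant limit is taken in $\mathbb P H^0(\mathbb P^3,T_{\mathbb P^3})$, where the semisimple elements in the boundary of $\mathbb C^*\cdot O$ still lie in $\mathbb C^*\cdot O$.
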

\begin{proof}
    Let $\omega \in V_3(a,b,c;n)$ be a sufficiently general element.  From the $34$ possibilities for $\{ (a,b,c;n)\}$ given by  Lemma \ref{L:longlist}, we already analyzed
    \[
          \underbrace{12}_{\ref{C:Raphael}}
        + \underbrace{3}_{\ref{L:extraKupka}}
        + \underbrace{14}_{\ref{C:nonintegrablenonrigid}}
        + \underbrace{8}_{\ref{L:nonintegrablerigid1}}
        + \underbrace{2}_{\ref{L:nonintegrablerigid2}}
        - \underbrace{6}_{ \ref{C:Raphael} \cap \ref{C:nonintegrablenonrigid}  }
        - \underbrace{3}_{\ref{L:extraKupka} \cap \ref{C:nonintegrablenonrigid}}
        - \underbrace{4}_{ \ref{C:Raphael} \cap \ref{L:nonintegrablerigid1}} = 26 \, .
    \]
    Among them, the only quadruples $(a,b,c;n)$  corresponding to sets $\TM_3(a,b,c;n)$
    such that the general element has a rational first integral are $(1,3,7;10)$ and $(4,6,7;25)$. They both
    correspond to the same irreducible and generically reduced component of $\Fol{3}$.  The eight remaining cases are:
    \begin{align*}
        & (1,2,5;7),(1,3,4;7),(1,3,4;13),(1,3,5;8), \\ & (1,3,5;11),(2,4,5;14),(2,4,5;17),(3,4,5;18).
    \end{align*}
    They correspond to four different subsets of $\Fol{3}$. We proceed to analyze each of them.

    \subsubsection*{$\TM_3(1,2,5;7) = \TM_3(3,4,5;18)$}
    The set $\TM_3(1,2,5;7)$ consists in the Zariski closure of the orbit under $\Aut(\mathbb P^3)$ of the foliation $\F$ defined in affine coordinates by the $1$-form
    \begin{equation}\label{E:(1,2,5;7)}
        \omega = (5zdy - 2y dz) + x( 5zdx - xdz) + y^2(2ydx - xdy) \, .
    \end{equation}

    If $g(x,y,z)= x^2 + 2y$ and $f(x,y,z)=x^5 + 5x^3y + (15/2)x y^2  - (15/2)z$ then $f^2/g^5$ is a first integral
    for $\F$.  It is clear that $\F$ lies in the irreducible component described by Proposition
    \ref{P:special log(2,5)} and Lemma \ref{L:conta25}. Thus the set $\TM_3(1,2,5;7) = \TM_3(3,4,5;18)$ is a proper closed subvariety of the irreducible component $\SLog(2,5)$.

    \subsubsection*{$\TM_3(1,3,5;11) = \TM_3(2,4,5;14)$}
    The set $\TM_3(2,4,5;14)$ consists in the Zariski closure of the orbit under $\Aut(\mathbb P^3)$ of the foliation $\F$ defined in affine
    coordinates by the $1$-form
    \[
        \omega = z ( 5zdy - 4ydz)  + xz ( 5zdx - 2x dz)+ y^2( 2ydx - xdy)
    \]
    Notice that $\omega$ is the pull-back under the morphism $\varphi(x,y,z) = (x,y,z^2)$ of the $1$-form
    presented in Equation (\ref{E:(1,2,5;7)}). If $g(x,y,z)= x^2 + 2y$ and $f(x,y,z)=x^5 + 5x^3y + (15/2)x y^2 - (15/2)z^2$ then $f^2/g^5$ is a first integral for $\F$. It follows that the set $\TM_3(1,3,5;11) = \TM_3(2,4,5;14)$
    is also a proper closed subvariety of $\SLog(2,5)$.

    \subsubsection*{$\TM_3(1,3,4;7) = \TM_3(1,3,4;13)$}
    The set $\TM_3(1,3,4;7)$ consists in the Zariski closure of the orbit under $\Aut(\mathbb P^3)$ of the foliation $\F$ defined in affine coordinates by the $1$-form
    \[
        \omega = (4zdy - 3y dz)  + y(3ydx - xdy) + x^2(4zdx - xdz) \, .
    \]
    We can rewrite $\omega$ as a complex multiple of $3f dg  - 4 g df$, where
    \begin{align*}
        f(x,y,z) &= x^3 + 3y \\
        g(x,y,z) & = x^4 + 4xy - 4z .
    \end{align*}
    If we set $(x_0,x_1,x_2,x_3,x_4) = ( 2 z, y,0,x,1)$ in Equation (\ref{E:CeDe}) then we recover a rational multiple of the $1$-form above.
        It follows that $\TM_3(1,3,4;7) = \TM_3(1,3,4;13)$ is a proper closed subvariety of $\SLog(3,4)$.

    \subsubsection*{$\TM_3(1,3,5;8) = \TM_3(2,4,5;17)$}
    The set $\TM_3(1,3,5;8)$ consists in the Zariski closure of the orbit under $\Aut(\mathbb P^3)$ of the foliation $\F$ defined in affine
    coordinates by the $1$-form
    \[
        \omega = (5zdy - 3y dz) + xy( 3ydx - xdy) + x^2(5zdx - xdz) \, .
    \]

    The irreducible hypersurfaces  cut out by the polynomials
    \begin{align*}
        f(x,y,z) &= x^3 + 3y \\
        g(x,y,z) &= x^5 + 5x^2y -10z
    \end{align*}
    are both invariant by $\F$ and the rational function $f^5/g^3$ is a rational first integral for $\F$.

    To conclude the proof of Proposition \ref{P:rationalfirstintegral} it remains to verify that $[\omega]$ is not contained
    in any of the irreducible components presented in Tables \ref{Tab:LOG}, \ref{Tab:split}, \ref{Tab:nonintegrablenonrigid}, and \ref{Tab:nonintegrablerigid}.

    Since $[\omega]$ does not have a polynomial integrating factor, Proposition \ref{P:belong to log} implies
    that $[\omega]$ does not belong to any of the logarithmic components $\Log(*)$. The general leaf of the foliation of $\F$ has degree
    $15$, while the general leaf of the general element of $\SLog(2,5)$ has degree $10$ and of $\SLog(3,4)$ has degree $12$. Since
    the set of foliations having an algebraic leaf of degree smaller than any given $k$ is closed, it follows that $[\omega]$ also does not belong
    to $\SLog(2,5)$ or to $\SLog(3,4)$. Thus $[\omega]$ does not belong to any of the irreducible components listed in Table \ref{Tab:LOG}.

    To verify that $[\omega]$ does not belong to any other irreducible component of the form $\TM_3(a,b,c;n)$, observe that
    the set
    \[
       \fix(\F) =  \{ v \in H^0(\mathbb P^3, T_{\mathbb P^3}) \, | \, i_v \omega =0 \}
    \]
    is one-dimensional and generated by the semi-simple vector field represented by $v_{(1,3,5)}$, and also represented by $v_{(1,3,5)}  -5R$
    which is conjugated to $-v_{(2,4,5)}$. At the same time, for any  foliation $\G$ in $\TM_3(a,b,c;n)$, the vector space $\fix(\G)$  has dimension  at least one, and
    if the dimension is one it must be generated by a vector field conjugated
    to $v_{(a,b,c)}$ or by a non semi-simple vector field. Therefore we can assume that
    the vector field represented by $v_{(a,b,c)}$ is conjugated to the vector field represented by $v_{(1,3,5)}$. Since there is no set in the Tables  \ref{Tab:split}, \ref{Tab:nonintegrablenonrigid}, and \ref{Tab:nonintegrablerigid} of the form $\TM_3(1,3,5;*)$, the result follows.
\end{proof}

\begin{cor}\label{C:multiplicativenonintegrable}
    If $\TM_3(a,b,c;n)$ is an irreducible component of $\Fol{3}$ such that the general element
    does not admit a rational first integral then $\TM_3(a,b,c;n)$ figures in Table \ref{Tab:nonintegrablenonrigid}
    or in Table \ref{Tab:nonintegrablerigid}.
\end{cor}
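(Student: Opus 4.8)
Here is a plan of proof.

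The plan is to derive the corollary as a bookkeeping consequence of Lemma \ref{L:longlist} and Proposition \ref{P:rationalfirstintegral}, supplemented by a short argument excluding the logarithmic case. So assume $\TM_3(a,b,c;n)$ is an irreducible component of $\Fol{3}$ whose general element admits no rational first integral, and normalize $0 \le a \le b \le c \ne 0$, $\gcd(a,b,c)=1$. First I would dispose of the degenerate weights: by Remark \ref{R:obvious}, when $(a,b,c) \in \{(0,0,1),(1,1,1)\}$ every foliation tangent to the action is a linear pull-back from $\mathbb P^2$, so $\TM_3(a,b,c;n)$ is contained in the linear pull-back component, which is treated separately; hence I may assume $(a,b,c) \notin \{(0,0,1),(1,1,1)\}$. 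Since $\TM_3(a,b,c;n) \subset \Open{3}$, a general element of $\TM_3(a,b,c;n)$ is $\Aut(\mathbb P^3)$-conjugate to a general $\omega \in V_3(a,b,c;n)$, so this $\omega$ has codimension two singular set, and whether it admits a polynomial integrating factor is an $\Aut(\mathbb P^3)$-invariant condition.

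Next I would split into two cases. If the general $\omega \in V_3(a,b,c;n)$ admits no polynomial integrating factor, then Lemma \ref{L:longlist} places $(a,b,c;n)$, up to the involution $\iota_3$, among the $34$ explicit quadruples listed there. By Proposition \ref{P:rationalfirstintegral}, exactly $10$ of these $34$ correspond to $\TM_3$'s whose general element carries a rational first integral; since by hypothesis the general element of $\TM_3(a,b,c;n)$ has none, $(a,b,c;n)$ lies in the complementary family of $24$ quadruples. These $24$ are precisely the ones furnished by Corollary \ref{C:nonintegrablenonrigid} ($14$ of them) and by Lemmas \ref{L:nonintegrablerigid1} and \ref{L:nonintegrablerigid2} ($8+2$ of them), and, after collapsing the identifications $\TM_3(a,b,c;n) = \TM_3(\iota_3(a,b,c;n))$, they organize into exactly the $8$ rows of Table \ref{Tab:nonintegrablenonrigid} and the $5$ rows of Table \ref{Tab:nonintegrablerigid}, each already shown to be an irreducible component of $\Fol{3}$. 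Thus in this case $\TM_3(a,b,c;n)$ figures in one of the two tables, as claimed.

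If instead the general $\omega \in V_3(a,b,c;n)$ does admit a polynomial integrating factor, then Lemma \ref{L:pifclosed} and Proposition \ref{P:belong to log} give $\TM_3(a,b,c;n) \subseteq \Log(\mathbb P^3)(d_1,\ldots,d_k)$ for some partition of $5$ with at least two parts; as $\Log(\mathbb P^3)(d_1,\ldots,d_k)$ is irreducible (Theorem \ref{T:log classifica}) and $\TM_3(a,b,c;n)$ is an irreducible component, this inclusion is an equality, and the absence of a rational first integral forces $k \ge 3$. To contradict this I would argue exactly as in the proof of Proposition \ref{P:codim 3}: the general element of $\TM_3(a,b,c;n)$ has in its $\fix$ algebra a nonzero vector field conjugate to $v_{(a,b,c)}$, whose singular set has codimension two because $(a,b,c) \notin \{(0,0,1),(1,1,1)\}$; running the no-intermediate-cohomology/splitting argument of Lemma \ref{L:extraKupka} then shows its tangent sheaf is isomorphic to $\mathcal O_{\mathbb P^3} \oplus \mathcal O_{\mathbb P^3}(-1)$, whereas by \cite[Theorem 3]{CuSoVa} the general element of a logarithmic component of degree $\ge 3$ on $\mathbb P^3$ has an isolated singularity and hence non-split tangent sheaf. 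This contradiction shows the polynomial-integrating-factor case cannot occur, which completes the argument.

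I expect the only delicate point to be this last step, namely that $\TM_3(a,b,c;n)$ cannot coincide with a genuine ($k \ge 3$) logarithmic component. The splitting argument above is clean once one knows the tangent sheaf is locally free away from a codimension two set, which is precisely what a nonzero $\fix$-vector field with codimension two zeros provides, exactly as in Lemma \ref{L:extraKupka}; alternatively one can bypass the sheaf theory by checking directly that a generic logarithmic $1$-form on $\mathbb P^3$ with at least three poles has $\fix = 0$ (the linear map $\mathfrak{sl}_4 \to H^0(\mathbb P^3, \mathcal O_{\mathbb P^3}(5))$ sending $v$ to $\sum_i \lambda_i\, v(f_i)\prod_{j\ne i} f_j$ is injective for generic residues $\lambda_i$ and generic $f_i$, the radial field being killed since $\sum_i \lambda_i d_i = 0$), which already contradicts $\fix \neq 0$ for the general element of $\TM_3(a,b,c;n)$. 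Everything else is a finite verification already carried out earlier in the section.
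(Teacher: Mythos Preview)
Your proposal is essentially correct and, in fact, more detailed than the paper, which states this result without proof as an immediate corollary of Proposition~\ref{P:rationalfirstintegral}. The paper's implicit argument is just the bookkeeping of your first case: among the $34$ quadruples of Lemma~\ref{L:longlist}, Proposition~\ref{P:rationalfirstintegral} isolates exactly the $10$ whose general element admits a rational first integral, and the remaining $24$ are precisely those assembled (after the identification $\TM_3(a,b,c;n)=\TM_3(\iota_3(a,b,c;n))$) into the rows of Tables~\ref{Tab:nonintegrablenonrigid} and~\ref{Tab:nonintegrablerigid}.

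You correctly flag that a complete argument must also rule out the possibility that the general element of $\TM_3(a,b,c;n)$ admits a polynomial integrating factor, since otherwise $(a,b,c;n)$ need not appear among the $34$ quadruples of Lemma~\ref{L:longlist} at all. The paper does not spell this out. One caveat on your treatment of this case: your first argument, invoking the splitting computation of Lemma~\ref{L:extraKupka} to get $T_{\F}\simeq\mathcal O_{\mathbb P^3}\oplus\mathcal O_{\mathbb P^3}(-1)$, does not apply unconditionally---that computation requires $n\neq 5a$ and $n\neq 5b$, and indeed fails for $(1,1,2;5)$. Your alternative, checking directly that a generic element of $\Log(d_1,\ldots,d_k)$ with $k\ge 3$ has $\fix=0$ while every general element of $\TM_3(a,b,c;n)$ (with $(a,b,c)\notin\{(0,0,1),(1,1,1)\}$) has $v_{(a,b,c)}\in\fix\setminus\{0\}$, is the clean way to close this gap.
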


\tikzset{
    table/.style={
        matrix of nodes,
        row sep=-\pgflinewidth,
        column sep=-\pgflinewidth,
        nodes={
            rectangle,
            draw=black,
            align=center
        },
        minimum height=1.5em,
        text depth=0.5ex,
        text height=2ex,
        nodes in empty cells,
        every even row/.style={
            nodes={fill=gray!20}
        },
        column 1/.style={
            nodes={text width=4.65cm,align=left,font=\small}
        },
        column 2/.style={
            nodes={text width=0.8cm,align=center},font=\small},
        column 3/.style={
            nodes={text width=0.8cm,align=center},font=\small},
        column 4/.style={
            nodes={text width=5.2cm,align=left, font=\small}
        },
        row 1/.style={
            nodes={ font=\bfseries
            }
        }
    }
}

\begin{table}
\begin{tikzpicture}
\matrix (mat) [table,text width=3cm]
{
    {\bf Set} & ${\dim}$     &  $\mathrm{Zdim}$      & Comment   \\
    $\TM_3(1,2,5;7) = \TM_3(3,4,5;18)$ & $14$  & $19$  & Contained in $\SLog(2,5)$. \\
    $\TM_3(1,3,4;7) = \TM_3(1,3,4;13)$ & $14$  & $21$  & Contained in $\SLog(3,4)$. \\
    $\TM_3(1,3,5;11)= \TM_3(2,4,5;14)$ & $14$  & $19$  & Contained in $\SLog(2,5)$. \\
    $\TM_3(1,3,7;10)=\TM_3(4,6,7;25)$ & $14$     & $14$  &  Appears in Table \ref{Tab:split}. \\
    $\TM_3(1,3,5;8) = \TM_3(2,4,5;17)$ & $14$  & $19$  &  Contained in a non-listed component.\\
};
\end{tikzpicture}
\caption{\small Candidates with rational first integrals. }
\end{table}

\section{Foliations on \texorpdfstring{$\mathbb P^3$}{P3}  tangent to additive actions}\label{S:additive}

In this section we study foliations on $\mathbb P^3$ tangent to algebraic actions
of $\mathbb C$ and show that they are all degenerations of foliations tangent to multiplicative
actions.

\subsection{Setup}
Let $\varphi:\C\times \Pj^3\rightarrow \Pj^3$ be an algebraic $\C$-action. For a
suitable choice of coordinates this action is determined by a nilpotent homogeneous vector field on $\C^4$ which has
the following form:
\[
    v_{(a,b)}=x_1\frac{\partial}{\partial x_0}+ax_2\frac{\partial}{\partial x_1}+bx_3\frac{\partial}{\partial x_2}\,,
\]
where $a,b\in\{0,1\}$. When both $a$ and $b$ are zero, the vector field $v_{(0,0)}$ has codimension one singularities
and foliations tangent to it are linear pull-backs. In what follows, we will focus on foliations
invariant by $\mathbb C$-actions which are not linear pull-backs.

\begin{lemma}
    If $\F$ is a foliation on $\Pj^3$ given by a homogeneous $1$-form $\omega$ tangent to $v_{(a,b)}$
    then  $L_{v_{(a,b)}}\omega=0$.
\end{lemma}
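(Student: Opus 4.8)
The plan is to reduce the statement to a scalar relation $L_{v_{(a,b)}}\omega = c\,\omega$ with $c\in\mathbb C$, and then to force $c=0$ using the nilpotency of $v_{(a,b)}$. Write $v=v_{(a,b)}$ and recall that ``$\F$ tangent to $v$'' means exactly $i_v\omega=0$: the image of $v$ in $H^0(\Pj^3,T_{\Pj^3})$ is then a section of $\TF$. By Cartan's formula, $L_v\omega = i_v(d\omega) + d(i_v\omega) = i_v\,d\omega$, so it suffices to prove $i_v\,d\omega=0$.

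First I would contract the Frobenius relation $\omega\wedge d\omega=0$ with $v$. Using $i_v\omega=0$ this gives
\[
    0 = i_v(\omega\wedge d\omega) = (i_v\omega)\wedge d\omega - \omega\wedge(i_v\,d\omega) = -\,\omega\wedge L_v\omega,
\]
hence $\omega\wedge L_v\omega = 0$. Now $L_v\omega$ is again a homogeneous $1$-form with coefficients of degree $\deg\omega = d+1$ (as $v$ preserves degrees and $v(x_i)$ is linear) and it is annihilated by the radial vector field, since $[v,R]=0$ because $v$ is linear; thus $L_v\omega\in H^0(\Pj^3,\Omega^1_{\Pj^3}(d+2))$, the same space as $\omega$. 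Because $\omega$ defines a foliation, $\sing(\F)$ has codimension at least two, so on $\Pj^3\setminus\sing(\F)$ the relation $\omega\wedge L_v\omega=0$ forces $L_v\omega = c\,\omega$ for a holomorphic function $c$; this $c$ extends across the codimension $\ge 2$ locus by Hartogs and, being homogeneous of degree zero, is a constant $c\in\mathbb C$.

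It remains to show $c=0$, and this is the only genuinely substantive point. The vector field $v$ is a nilpotent linear vector field on $\mathbb C^4$, so the one-parameter group $\exp(tv)\subset\Aut(\Pj^3)$ is unipotent and acts on the finite-dimensional space $W = H^0(\Pj^3,\Omega^1_{\Pj^3}(d+2))$ polynomially in $t$; consequently its infinitesimal generator $L_v\colon W\to W$ is a nilpotent endomorphism. (Concretely: assigning weight $i$ to $x_i$ and to $dx_i$, one checks that $L_v$ strictly raises the weight of every monomial $1$-form $x^\alpha dx_j$ by one, and the weights occurring in $W$ are bounded; equivalently, there are no nontrivial algebraic characters $\mathbb G_a\to\mathbb G_m$.) A nilpotent operator has $0$ as its only eigenvalue, so $L_v\omega = c\,\omega$ together with $\omega\neq 0$ yields $c=0$. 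Therefore $L_{v_{(a,b)}}\omega = 0$, as asserted. Apart from the nilpotency argument in this last paragraph, everything is a routine combination of Cartan's formula, integrability, and Hartogs' extension, so I expect no real obstacle.
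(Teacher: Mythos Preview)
Your proof is correct and is the infinitesimal counterpart of the paper's argument. The paper works at the group level: the flow $\psi_t=\exp(tv_{(a,b)})$ preserves $\F$, so $\psi_t^*\omega=\chi(t)\omega$, and since $(\mathbb C,+)$ admits no non-trivial algebraic characters one has $\chi\equiv 1$ and hence $L_{v_{(a,b)}}\omega=0$ by differentiation; your nilpotency of $L_v$ on $H^0(\Pj^3,\Omega^1_{\Pj^3}(d+2))$ is exactly the Lie-algebra form of that character statement, and your explicit derivation of $L_v\omega=c\,\omega$ via integrability and Hartogs spells out what the paper compresses into ``the flow preserves the foliation''.
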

\begin{proof}
    Let $\psi_t: \C^4 \to \C^4$ be the flow determined by $v_{(a,b)}$. Since $(\mathbb C,+)$ has
    no non-trivial characters, it follows that $\psi_t^* \omega= \omega$. The result follows from the differentiation
    of this expression.
\end{proof}

For every $d\in \N$ and every $a,b\in\{0,1\}$, set
\[
A_d(1,a,b) = \left\{ \omega \in H^0(\Pj^3,\Omega_{\Pj^3}^1(d+2))\,|\,
\text{$i_{v_{(a,b)}} \omega=0$ and $L_{v_{(a,b)}}\omega=0$}  \right\}\,.
\]

Analogously to the definition of $\TM_d(a,b,c;n)$ in Section \ref{S:mult}, we define $\TA_d(1,a,b)$
as the closure of the image of the rational map
\begin{align*}
    \varphi_{(a,b)} : \Aut(\mathbb P^3) \times \mathbb P(A_d(1,a,b)) &\dashrightarrow \mathbb P H^0(\mathbb P^3, \Omega^1_{\mathbb P^3}(d+2)) \\
    (\varphi, [\omega] ) & \longmapsto \varphi^* [\omega] \, .
\end{align*}
intersected with $\Open[3]{d}$.

\begin{lemma}\label{L:dimTAd}
    If $d \ge 3$ and $(a,b) \in \{ (0,1),(1,0), (1,1) \}$  then the dimension of $\TA_d(1,a,b)$ is equal to
    \[
        \dim \Aut(\mathbb P^3) + \dim \mathbb P(A_d(1,a,b)) - \dim \{ v \in H^0(\mathbb P^3, T_{\mathbb P^3}) ; [v,v_{(a,b)}] \wedge v_{(a,b)} =0 \}  \, .
    \]
\end{lemma}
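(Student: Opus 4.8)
The plan is to mimic the proof of Lemma \ref{L:dimTMd} almost verbatim, replacing the multiplicative vector field $v_{(a,b,c)}$ by the nilpotent vector field $v_{(a,b)}$ and the set $V_d(a,b,c;n)$ by $A_d(1,a,b)$. First I would let $G_{(a,b)} \subset \Aut(\mathbb P^3)$ be the algebraic subgroup preserving the foliation defined by $v_{(a,b)}$, and observe that for $(a,b) \in \{(0,1),(1,0),(1,1)\}$ the vector field $v_{(a,b)}$ has singular set of codimension at least two (it is not a linear pull-back, which is precisely the case $(a,b)=(0,0)$). Consequently the Lie algebra of $G_{(a,b)}$ is the normalizer Lie algebra
\[
    \{ v \in H^0(\mathbb P^3, T_{\mathbb P^3}) \, ; \, [v,v_{(a,b)}] \wedge v_{(a,b)} = 0 \} \, ,
\]
since a vector field preserves the foliation $\langle v_{(a,b)}\rangle$ exactly when its bracket with $v_{(a,b)}$ is again a multiple of $v_{(a,b)}$. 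The group $G_{(a,b)}$ acts on $\mathbb P H^0(\mathbb P^3, \Omega^1_{\mathbb P^3}(d+2))$ preserving $\mathbb P(A_d(1,a,b))$, because conjugating a $1$-form annihilated by and invariant under $v_{(a,b)}$ by an element normalizing $v_{(a,b)}$ yields another such $1$-form (up to the character, which is trivial here).

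Next I would analyze the fibers of the map $\varphi_{(a,b)} : \Aut(\mathbb P^3) \times \mathbb P(A_d(1,a,b)) \dashrightarrow \mathbb P H^0(\mathbb P^3, \Omega^1_{\mathbb P^3}(d+2))$. For any $h \in \Aut(\mathbb P^3)$, $[\omega] \in \mathbb P(A_d(1,a,b))$ and any $g \in G_{(a,b)}$, the pair $(g^{-1}h, g^*[\omega])$ maps to $\varphi_{(a,b)}(h,[\omega])$, so the fiber always contains the orbit-type set $\{(g^{-1}h, g^*[\omega]) \, ; \, g \in G_{(a,b)}\}$, which has dimension $\dim \Aut(\mathbb P^3) - \dim \mathbb P(A_d(1,a,b)) + \dim \mathbb P(A_d(1,a,b))$ no—more carefully, this set is the image of $G_{(a,b)}$ under $g \mapsto (g^{-1}h, g^*[\omega])$, hence of dimension $\dim G_{(a,b)}$ minus the dimension of the stabilizer; for $\omega$ generic with isolated enough singularities the stabilizer is finite, so the fiber has dimension at least $\dim G_{(a,b)}$. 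For the reverse inclusion, take $\omega \in A_d(1,a,b)$ with codimension two singular set; since $d \ge 3$, any vector field annihilating $\omega$ must be a constant multiple of $v_{(a,b)}$ (the only divisor-free vector field tangent to a degree $\ge 3$ foliation inside the leaf direction), so if $f^*[\omega] \in \mathbb P(A_d(1,a,b))$ for some $f \in \Aut(\mathbb P^3)$ then $f$ must normalize $v_{(a,b)}$, i.e. $f \in G_{(a,b)}$. Hence the fiber over $\varphi_{(a,b)}(h,[\omega])$ is exactly that orbit-type set, of dimension $\dim G_{(a,b)}$, and the dimension formula follows by the fiber-dimension theorem.

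The main obstacle is the step asserting that, for $d \ge 3$ and $\omega \in A_d(1,a,b)$ with codimension two singular set, any global vector field $\xi$ with $i_\xi \omega = 0$ is proportional to $v_{(a,b)}$ — this is the additive analogue of the sentence ``any vector field annihilating $\omega$ must be a complex multiple of $v_{(a,b,c)}$'' used in Lemma \ref{L:dimTMd}. The argument is that $\fix(\F) = \{ \xi \, ; \, i_\xi \omega = 0\}$ consists of vector fields tangent to the foliation $\F$ defined by $\omega$; if $\dim \fix(\F) \ge 2$ then $\F$ would be tangent to a two-dimensional abelian (or otherwise low-dimensional) Lie algebra of vector fields with generic rank two, forcing $\deg \F \le 2$, contradicting $\deg \F = d \ge 3$. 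One should also record that $A_d(1,a,b)$ indeed contains a $1$-form with codimension two singular set for these $(a,b)$, so that $\TA_d(1,a,b)$ is non-empty and the generic computation makes sense; this can be checked on an explicit model or deferred to the subsequent subsections where such $1$-forms are produced. Modulo this, the proof is a routine transcription of the proof of Lemma \ref{L:dimTMd}.
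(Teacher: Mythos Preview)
Your proposal is correct and follows exactly the approach the paper takes: the paper's own proof simply reads ``It suffices to observe that $A_d(1,a,b)$ always has an element defining a foliation with codimension two singularities and apply the argument used in the proof of Lemma \ref{L:dimTMd}.'' Your write-up spells this out, correctly identifies the one point needing verification (existence of an $\omega \in A_d(1,a,b)$ with codimension two singular set), and your justification for the key step---that for $d\ge 3$ the annihilator $\fix(\F)$ is one-dimensional because two independent tangent vector fields would force $\deg\F\le 2$---is the intended argument.
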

\begin{proof}
    It suffices to observe that $A_d(1,a,b)$ always has an element defining a foliation with codimension two
    singularities and apply the  argument used in the proof of Lemma \ref{L:dimTMd}.
\end{proof}

\subsection{The sets $\TA_d(1,1,0)$ and $\TA_d(1,0,1)$}
The sets $\TA_d(1,1,0)$ and $\TA_d(1,0,1)$ admit a simple uniform description, for arbitrary $d\ge 2$, as the lemma below shows.

\begin{lemma}\label{L:TA110}
    For $d\ge 2$, the sets $\TA_d(1,1,0)$ and $\TA_d(1,0,1)$ are strictly
    contained in $\TM_d(1,1,2;d+2)$. In particular, they are not irreducible components
    of $\Fol{d}$.
\end{lemma}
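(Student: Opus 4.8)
The plan is first to establish the two containments $\TA_d(1,1,0)\subseteq\TM_d(1,1,2;d+2)$ and $\TA_d(1,0,1)\subseteq\TM_d(1,1,2;d+2)$, and then to show that both are strict. The concluding assertion will follow at once: $\TM_d(1,1,2;d+2)$ is an irreducible closed subset of $\Fol{d}$, so a proper closed irreducible subset of it cannot be an irreducible component.

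For the containments I would begin by rewriting both sides in a uniform way. Since every $\omega\in H^0(\mathbb P^3,\Omega^1_{\mathbb P^3}(d+2))$ satisfies $i_R\omega=0$ and $L_R\omega=(d+2)\omega$ for the radial vector field $R$ on $\mathbb C^4$, putting $s_0:=v_{(1,1,2)}-R=\mathrm{diag}(0,0,1,-1)$ (a semisimple vector field with eigenvalue multiset $\{-1,0,0,1\}$ on $\mathbb C^4$) one gets $V_d(1,1,2;d+2)=\{\omega : i_{s_0}\omega=0,\ L_{s_0}\omega=0\}$, whereas by definition $A_d(1,a,b)=\{\omega : i_{v_{(a,b)}}\omega=0,\ L_{v_{(a,b)}}\omega=0\}$. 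Now I would degenerate the torus $\langle s_0\rangle$ onto the unipotent group generated by $v_{(a,b)}$. For $(a,b)=(1,0)$ take $s:=\mathrm{diag}(1,0,-1,0)$, and for $(a,b)=(0,1)$ take $s:=\mathrm{diag}(0,-1,1,0)$; each is semisimple with eigenvalue multiset $\{-1,0,0,1\}$, hence conjugate to $s_0$ by some $g\in\Aut(\mathbb P^3)$, so $g^*V_d(1,1,2;d+2)=W:=\{\omega : i_s\omega=0,\ L_s\omega=0\}$ and $\overline{\Aut(\mathbb P^3)\cdot\mathbb P(W)}=\overline{\Aut(\mathbb P^3)\cdot\mathbb P(V_d(1,1,2;d+2))}$. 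A one-line computation gives $[s,v_{(a,b)}]=-v_{(a,b)}$, so $s$ normalizes but does not centralize $\mathbb C v_{(a,b)}$, and since $[v_{(a,b)},v_{(a,b)}]=0$ one has $\mathrm{Ad}(\exp(rv_{(a,b)}))(s)=s+rv_{(a,b)}$ exactly. Therefore conjugating $\mathbb P(W)$ by the flow $\exp(rv_{(a,b)})$ produces $\mathbb P\{\omega : i_{\xi_r}\omega=0,\ L_{\xi_r}\omega=0\}$ with $\xi_r:=\tfrac1r s+v_{(a,b)}\to v_{(a,b)}$ as $r\to\infty$ (the precise pullback convention only changes $v_{(a,b)}$ to $-v_{(a,b)}$, which does not affect the relevant kernels). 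These $m$-planes, $m:=\dim W=\dim V_d(1,1,2;d+2)$, all lie in $\Aut(\mathbb P^3)\cdot\mathbb P(W)$; by completeness of the Grassmannian of $m$-planes in $H^0(\mathbb P^3,\Omega^1_{\mathbb P^3}(d+2))$ the family extends to $r=\infty$, and because the kernels of the continuously varying linear maps $\omega\mapsto(i_{\xi_r}\omega,L_{\xi_r}\omega)$ specialize into the kernel of the limiting map, the limit $m$-plane is contained in $A_d(1,a,b)$.

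The one step that is not formal is the dimension identity $\dim A_d(1,a,b)=\dim V_d(1,1,2;d+2)$: the inequality $\ge$ is automatic from the degeneration just described, and I would obtain the reverse inequality either by decomposing $H^0(\mathbb P^3,\Omega^1_{\mathbb P^3}(d+2))$ under the $\mathfrak{sl}_2$-triple generated by $v_{(a,b)}$ and $s$ and comparing the dimensions of the zero-weight and the highest-weight subspaces, or, in the explicit style of the rest of the paper, by solving the two defining linear systems directly for arbitrary $d$. This equality forces the limit $m$-plane above to be all of $A_d(1,a,b)$, so $\mathbb P(A_d(1,a,b))\subseteq\overline{\Aut(\mathbb P^3)\cdot\mathbb P(V_d(1,1,2;d+2))}$, and intersecting with $\Open{d}$ yields $\TA_d(1,a,b)\subseteq\TM_d(1,1,2;d+2)$.

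For strictness I would invoke Lemmas \ref{L:dimTMd} and \ref{L:dimTAd} (valid for $d\ge3$; the case $d=2$ can be treated by hand). Given the dimension identity above, the gap $\dim\TM_d(1,1,2;d+2)-\dim\TA_d(1,a,b)$ equals $\dim\{v : [v,v_{(a,b)}]\wedge v_{(a,b)}=0\}-\dim\mathfrak{g}_{(1,1,2)}$; by Remark \ref{R:dim normalizer} the subtrahend is $5$, while the minuend is the dimension of the normalizer of $\mathbb C v_{(a,b)}$ in $\mathfrak{pgl}_4$, which I would compute to be $6$ for the Jordan type $(3,1)$ nilpotent $v_{(1,0)}$ and $8$ for the Jordan type $(2,2)$ nilpotent $v_{(0,1)}$ (a nilpotent with Jordan blocks $\lambda_1\ge\lambda_2\ge\cdots$ has centralizer of dimension $\sum_i(2i-1)\lambda_i$ in $\mathfrak{gl}_4$, here $6$ and $8$, hence $5$ and $7$ in $\mathfrak{sl}_4$, and enlarging the centralizer to the normalizer of the line adds exactly one dimension). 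In both cases the gap is strictly positive, so $\dim\TA_d(1,a,b)<\dim\TM_d(1,1,2;d+2)$, the inclusion is strict, and therefore $\TA_d(1,1,0)$ and $\TA_d(1,0,1)$ are not irreducible components of $\Fol{d}$. The hardest part of the argument is the dimension identity $\dim A_d(1,a,b)=\dim V_d(1,1,2;d+2)$, on which both the containment and the strictness depend; everything else is either formal (the contraction of the torus onto the unipotent group) or a routine linear-algebra computation carried out uniformly in $d$.
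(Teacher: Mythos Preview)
Your approach is genuinely different from the paper's and is essentially correct, but the pivotal dimension identity $\dim A_d(1,a,b)=\dim V_d(1,1,2;d+2)$ is left as a sketch, and the $\mathfrak{sl}_2$ argument you outline does not go through as stated: comparing the zero $h$-weight space with the highest-weight space of $H^0(\mathbb P^3,\Omega^1_{\mathbb P^3}(d+2))$ ignores the extra linear conditions $i_s\omega=0$ and $i_e\omega=0$, which are \emph{different} maps (one preserves $h$-weight, the other raises it by~$2$), so a bare irreducible-decomposition count does not equate the two kernels. Your fallback of solving both linear systems uniformly in $d$ would certainly work, but it is exactly the step where the argument ceases to be soft.

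The paper sidesteps this difficulty with a geometric observation that makes both the containment and the dimension equality automatic. It notes that the foliation by orbits of $v_{(1,1,2)}$ is the fiber foliation of a dominant rational map $\varphi=(L_1:L_2:Q):\mathbb P^3\dashrightarrow\mathbb P(1,1,2)$; varying $(L_1,L_2,Q)$ over all triples (so that the fiber foliation is generically conjugate to the $v_{(1,1,2)}$-orbits) yields exactly $\TM_d(1,1,2;d+2)$ as the closure of the pullbacks of foliations on $\mathbb P(1,1,2)$ with normal sheaf $\mathcal O(d+2)$. For the two special choices $(x_2,x_3,x_1^2-2x_0x_2)$ and $(x_1,x_3,x_1x_2-x_0x_3)$ the fiber foliation is defined by $v_{(1,0)}$ and $v_{(0,1)}$ respectively, and since neither $\varphi$ contracts a hypersurface, pullback identifies $A_d(1,a,b)$ with the \emph{same} space of $1$-forms on $\mathbb P(1,1,2)$. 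Thus $\TA_d(1,a,b)\subset\TM_d(1,1,2;d+2)$ and the dimension equality are immediate, and strictness follows because a generic $(L_1,L_2,Q)$ has $\{L_1=L_2=0\}$ meeting $\{Q=0\}$ in two distinct points, forcing the tangent vector field to be semisimple. Your degeneration $\mathrm{Ad}(\exp(rv_{(a,b)}))s\to v_{(a,b)}$ is the Lie-algebraic shadow of the same phenomenon (the line becoming tangent to the conic), and your normalizer computation for strictness is correct and matches the dimensions recorded in the Remark following the lemma; but the paper's formulation gets the hard step for free.
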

\begin{proof}
    Let $L_1, L_2 \in H^0(\mathbb P^3,\mathcal O_{\mathbb P^3}(1))$ be two linear forms and
    $Q \in H^0(\mathbb P^3,\mathcal O_{\mathbb P^3}(2))$ be a quadratic form.
    If the rational map
    \begin{align*}
        \varphi : \mathbb P^3 & \dashrightarrow \mathbb P(1,1,2) \\
        (x_0: x_1 : x_2 :x_3) &\mapsto (L_1:L_2:Q)
    \end{align*}
    is dominant then its fibers define a one-dimensional foliation $\mathcal G$ on $\mathbb P^3$ of degree at most one.
    If the degree is exactly one then $\G$ is defined by a global vector field. For example, if $L_1 = x_0$, $L_2=x_1$,
    and $Q=x_2 \cdot x_3 $ then $\G$ is defined by the vector field
    \[
        v =  x_0 \frac{\partial }{\partial x_0} + x_1 \frac{\partial }{\partial x_1} + 2 x_2 \frac{\partial }{\partial x_2} \, .
    \]
    Small perturbations of $L_1, L_2$, and $Q$ will also define foliations tangent to semi-simple vector fields conjugated to $v$. The reader can verify that the precise condition to guarantee that a semi-simple vector field defines $\G$ is that the line $\{L_1=L_2=0\}$ intersects
    $\{Q=0\}$  at two distinct points. Under these conditions,
    the set $\Sigma = \{ dL_1 \wedge dL_2 \wedge dQ =0 \}$ has codimension at least two. Outside $\Sigma$, the differential of $\varphi$  has maximal rank.
    Therefore the set $\TM_d(1,1,2;d+2)$ can be described as the closure of pull-backs
    of foliations on $\mathbb P(1,1,2)$ with normal sheaf equal to $\mathcal O_{\mathbb P(1,1,2)}(d+2)$ under
    rational maps of the same form as $\varphi$.

    If $L_1 =x_2$, $L_2=x_3$, and    $Q =x_1^2 - 2 x_0 x_2$ then $\G$ is defined by
    \[
        v = x_1 \frac{\partial }{\partial x_0} +  x_2 \frac{\partial }{\partial x_1} \, .
    \]
    If instead $L_1 = x_1$, $L_2=x_3$, and $Q =  x_1x_2 - x_0 x_3$ then $\G$ is defined by
    \[
        v =  x_1 \frac{\partial }{\partial x_0} +  x_3 \frac{\partial }{\partial x_2} \, .
    \]
    In both cases, the corresponding $\varphi$ does not contract hypersurfaces showing that the sets
    $\TA_d(1,0,1)$ and $\TA_d(1,1,0)$ are both contained in $\TM_d(1,1,2;d+2)$.
\end{proof}

\begin{remark}
    The dimension of $\TA_3(1,1,0)$ is $20$ and therefore it is a hypersurface of $\TM_3(1,1,2;5)$.
    A computation shows that the Zariski tangent space of $\Fol{3}$ at a general point of $\TA_3(1,1,0)$ has dimension
    $21$. Therefore, a general point of $\TA_3(1,1,0)$ is contained in the smooth locus of $\TM_3(1,1,2;5)$. We also point
    out that the general element of $\TA_3(1,1,0)$ is a foliation with finitely many non-Kupka singularities.

    The dimension of $\TA_3(1,0,1)$ is $18$ and the Zariski tangent space of $\Fol{3}$ at a general point of it has dimension $25$.
    The general foliation in $\TA_3(1,0,1)$ has infinitely many non-Kupka singularities.
\end{remark}

\subsection{The set $\TA_3(1,1,1)$}
We now turn our attention to the set $\TA_3(1,1,1)$.

\begin{lemma}\label{L:TA111}
    The set $\TA_3(1,1,1)$ is strictly contained in $\TM_3(1,2,3;7)$. In particular, $\TA_3(1,1,1)$ is  not an irreducible component
    of $\Fol{3}$.
\end{lemma}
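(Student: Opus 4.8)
The plan is to realize $\TA_3(1,1,1)$ as a flat limit of the irreducible component $\TM_3(1,2,3;7)$, by deforming the regular nilpotent generator $v_{(1,1)}$ of the additive action through a family of diagonalizable vector fields, and then to keep the two sets apart by a dimension count. In homogeneous coordinates on $\C^4$, set $w_t = v_{(1,1)} + t\,v_{(1,2,3)}$ for $t\in\C$. As a $4\times 4$ matrix $w_t$ is upper triangular with diagonal entries $t,2t,3t,0$; hence for $t\neq 0$ it has four distinct eigenvalues and is conjugate, in $\mathrm{GL}_4(\C)$, to $t\,v_{(1,2,3)}$, so that the automorphism of $\Pj^3$ it generates is conjugate to the one generated by $v_{(1,2,3)}$. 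First I would consider, for each $t$, the linear subspace
\[
    V_7(w_t) \;=\; \bigl\{\,\omega\in H^0(\Pj^3,\Omega^1_{\Pj^3}(5)) \;:\; i_{w_t}\omega=0,\ L_{w_t}\omega=7t\,\omega\,\bigr\}.
\]
For $t\neq 0$, conjugating by an automorphism carrying $w_t$ to $t\,v_{(1,2,3)}$ takes $V_7(w_t)$ isomorphically onto $V_3(1,2,3;7)$ (the factor $t$ cancels against the $7t$ in the Lie derivative condition), so $\mathbb P(V_7(w_t))\cap\Open{3}\subseteq\TM_3(1,2,3;7)$. At $t=0$ the two conditions become $i_{v_{(1,1)}}\omega=0$ and $L_{v_{(1,1)}}\omega=0$, hence $V_7(w_0)=A_3(1,1,1)$.

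The crucial quantitative input is the equality $\dim A_3(1,1,1) = 5$, which matches $\dim V_3(1,2,3;7)=5$; the latter follows from $\dim\TM_3(1,2,3;7)=16$ (Table \ref{Tab:split}) via Lemma \ref{L:dimTMd} and Remark \ref{R:dim normalizer}. Granting it, the incidence variety $\mathcal{V} = \{(t,[\omega]) : \omega\in V_7(w_t)\}\subset\C\times\Pj H^0(\Pj^3,\Omega^1_{\Pj^3}(5))$ has all its fibers over $\C$ equal to projective linear subspaces of dimension $4$; its part over $\C^*$ thus has dimension $5$, and the closure of that part meets the central fiber $\mathbb P(A_3(1,1,1))$ in a subvariety of dimension at least $4$, that is, in all of $\mathbb P(A_3(1,1,1))$. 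Consequently a general $[\omega]\in\mathbb P(A_3(1,1,1))$, which has a singular set of codimension two, is a limit of elements of $\TM_3(1,2,3;7)$. Since $\TM_3(1,2,3;7)$ is closed in $\Fol{3}$ and $\Aut(\Pj^3)$-invariant, and $\TA_3(1,1,1)$ is by definition the intersection with $\Open{3}$ of the closure of the $\Aut(\Pj^3)$-orbit of $\mathbb P(A_3(1,1,1))$, we conclude $\TA_3(1,1,1)\subseteq\TM_3(1,2,3;7)$.

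For the strictness of the inclusion I would use Lemma \ref{L:dimTAd}, which gives $\dim\TA_3(1,1,1) = 15 + \dim\mathbb P(A_3(1,1,1)) - \dim\mathfrak n$, where $\mathfrak n = \{v\in H^0(\Pj^3,T_{\Pj^3}) : [v,v_{(1,1)}]\wedge v_{(1,1)}=0\}$. A short linear-algebra computation shows that $[v,v_{(1,1)}]\wedge v_{(1,1)}\equiv 0$ forces $[v,v_{(1,1)}]\in\C\,v_{(1,1)}$, so $\mathfrak n$ is the normalizer in $\mathfrak{sl}_4=H^0(\Pj^3,T_{\Pj^3})$ of the line spanned by the regular nilpotent $v_{(1,1)}$; this normalizer has dimension $4$, being the sum of the three-dimensional centralizer of $v_{(1,1)}$ and one grading direction. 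Together with $\dim\mathbb P(A_3(1,1,1))=4$ this yields $\dim\TA_3(1,1,1)=15 < 16 = \dim\TM_3(1,2,3;7)$, so $\TA_3(1,1,1)$ is a proper closed subvariety of $\TM_3(1,2,3;7)$; having dimension strictly smaller than that of an irreducible component of $\Fol{3}$, it is not itself an irreducible component.

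The step I expect to be the main obstacle is the computation $\dim A_3(1,1,1) = 5$ — equivalently, the fact that the correct eigenvalue to impose in $V_7(w_t)$ is exactly $7$ (or the equal value $8$, since $\TM_3(1,2,3;7)=\TM_3(1,2,3;8)$), and not $6$ or $9$, for which the analogous incidence variety would fail to be flat because $\dim V_3(1,2,3;6)=4<5$. This requires understanding the space of $v_{(1,1)}$-basic $1$-forms on $\Pj^3$ of degree three; one can identify it with $H^0(\Pj(1,2,3),\Omega^{[1]}_{\Pj(1,2,3)}(7))$ via the quotient map $(x_0:x_1:x_2:x_3)\mapsto (x_3 : x_2^2-2x_1x_3 : x_2^3-3x_1x_2x_3+3x_0x_3^2)$ onto the weighted projective plane, or simply verify it with a computer algebra system, as is done throughout the paper. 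The remaining ingredients — the conjugacy of $w_t$ to $t\,v_{(1,2,3)}$, the flat-limit argument, and the normalizer computation — are routine.
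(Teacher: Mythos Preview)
Your proposal is correct and follows essentially the same approach as the paper: deform $v_{(1,1)}$ through the family $v_{(1,1)}+t\,v_{(1,2,3)}$, use constancy of $\dim\ker\beta_t$ (equivalently, your equality $\dim A_3(1,1,1)=\dim V_3(1,2,3;7)=5$) to conclude that $A_3(1,1,1)$ lies in the limit, and hence $\TA_3(1,1,1)\subset\TM_3(1,2,3;7)$. Your explicit dimension count for strictness via the normalizer of the regular nilpotent is a nice addition that the paper leaves implicit; the only cosmetic slip is writing $\mathfrak{sl}_4$ for $H^0(\Pj^3,T_{\Pj^3})\simeq\mathfrak{pgl}_4$, which does not affect the argument.
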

\begin{proof}
    Consider the family of homogeneous vector fields
    \[
        v_{\epsilon} = v_{(1,1)} + \epsilon v_{(1,2,3)}
    \]
    parameterized by $\epsilon \in \mathbb C$.  Consider also the family of linear maps
    \begin{align*}
         \beta_{\epsilon} : H^0(\mathbb P^3, \Omega^1_{\mathbb P^3}(5)) & \longrightarrow H^0(\mathbb P^3,\mathcal O_{\mathbb P^3}(5)) \times H^0(\mathbb P^3, \Omega^1_{\mathbb P^3}(5)) \\
        \omega & \mapsto \left( i_{v_{\epsilon}} \omega  , L_{v_{\epsilon}} \omega - 7 \epsilon \omega \right) \, .
    \end{align*}

    For $\epsilon \neq 0$, since $v_{\epsilon}$ is conjugated to $\epsilon v_{(1,2,3)}$,
    the kernel of $\beta_{\epsilon}$ is conjugated to $V_3(1,2,3;7)$. For $\epsilon =0$, the kernel $\beta_0$ coincides with
    $A_3(1,1,1)$.

    Explicitly computing the kernel of $\beta_{\epsilon}$, we verified that every element in the kernel of $\beta_0$ is
    a limit of elements in the kernel of $\beta_{\epsilon}$ for $\epsilon \neq 0$. Alternatively, this fact also follows
    from the observation that $\dim \ker \beta_{\epsilon}$ does not depend on $\epsilon$.
            In any case, we obtain
    that $\TA_3(1,1,1)$ is strictly contained in $\TM_3(1,2,3;7)$.
\end{proof}

\begin{remark}\label{R:generalizando}
    Following a similar approach we verified that $\TA_d(1,1,1)$ is contained in $\TM_d(1,2,3;2d+1)$ also for $d=2,4,5,6$. It seems
    natural to expect that $\TA_d(1,1,1)$ is contained in $\TM_d(1,2,3;2d+1)$ for every $d \ge 2$.
    When $d=2$, the sets $\TA_2(1,1,1)$ and $\TM_2(1,2,3;5)$ are equal and coincide with the so
    called exceptional component of $\Fol{2}$.
\end{remark}

\subsection{Synthesis}Lemmas \ref{L:TA110} and \ref{L:TA111} above immediately imply the main result of this section.

\begin{prop}\label{P:additive}
    The only irreducible component of $\Fol{3}$ for which its general element corresponds to
    a foliation tangent to an algebraic action of $(\mathbb C,+)$ is the linear pull-back component $\LPB(3)=\TA_3(1,0,0)$.
\end{prop}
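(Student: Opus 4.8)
\end{prop}

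The plan is to obtain the statement as a formal consequence of the classification of algebraic $(\mathbb C,+)$-actions on $\mathbb P^3$ recalled at the beginning of this section, together with Lemmas \ref{L:TA110} and \ref{L:TA111}. First I would note that a foliation on $\mathbb P^3$ is tangent to a nontrivial algebraic action of $(\mathbb C,+)$ exactly when, after a linear change of homogeneous coordinates, it is tangent to one of the four vector fields $v_{(a,b)}$ with $(a,b)\in\{0,1\}^2$. Since each set $\TA_3(1,a,b)$ is invariant under $\Aut(\mathbb P^3)$, this says that the locus of foliations in $\Fol{3}$ tangent to some nontrivial $\mathbb C$-action is the finite union $\bigcup_{(a,b)\in\{0,1\}^2}\TA_3(1,a,b)$, and that each of its four pieces is closed in $\Fol{3}$, being the intersection of a closure in $\mathbb P H^0(\mathbb P^3,\Omega^1_{\mathbb P^3}(5))$ with the open subscheme $\Open[3]{3}\supseteq\Fol{3}$.

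Next I would argue as follows. Let $\Sigma$ be an irreducible component of $\Fol{3}$ whose general element is tangent to a nontrivial algebraic $\mathbb C$-action; then $\Sigma$ is contained in the finite union above, and since it is irreducible it lies inside a single $\TA_3(1,a,b)$. As $\TA_3(1,a,b)$ is an irreducible closed subset of $\Fol{3}$ containing the component $\Sigma$, maximality of $\Sigma$ forces $\Sigma=\TA_3(1,a,b)$ and, in particular, $\TA_3(1,a,b)$ is itself an irreducible component of $\Fol{3}$. Now Lemma \ref{L:TA110} rules out $(a,b)\in\{(1,0),(0,1)\}$, since it places $\TA_3(1,1,0)$ and $\TA_3(1,0,1)$ strictly inside $\TM_3(1,1,2;5)$, and Lemma \ref{L:TA111} rules out $(a,b)=(1,1)$, since $\TA_3(1,1,1)$ sits strictly inside $\TM_3(1,2,3;7)$. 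Hence $(a,b)=(0,0)$, so $\Sigma=\TA_3(1,0,0)$; and because $v_{(0,0)}=x_1\tfrac{\partial}{\partial x_0}$ vanishes along the hyperplane $\{x_1=0\}$, any foliation tangent to it is a linear pull-back from $\mathbb P^2$, whence $\TA_3(1,0,0)=\LPB(3)$.

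Finally I would record the converse, so that the word ``only'' is justified: $\LPB(3)$ is an irreducible component of $\Fol{3}$, and the general element of $\LPB(3)$, being a linear pull-back from $\mathbb P^2$, is tangent to the $(\mathbb C,+)$-action by translations along the fibers of the defining linear projection, which in suitable coordinates is generated by $v_{(0,0)}$. I do not anticipate a genuine obstacle here: the substantive content has already been isolated in Lemmas \ref{L:TA110} and \ref{L:TA111}, which supply the strict embeddings of $\TA_3(1,1,0)$ and $\TA_3(1,0,1)$ into $\TM_3(1,1,2;5)$ and of $\TA_3(1,1,1)$ into $\TM_3(1,2,3;7)$. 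The only point that requires a little care in the argument above is the finiteness of the classification of algebraic $\mathbb C$-actions on $\mathbb P^3$, which is precisely what upgrades ``the general element of $\Sigma$ is tangent to some $\mathbb C$-action'' to ``$\Sigma$ equals one of the four sets $\TA_3(1,a,b)$''.
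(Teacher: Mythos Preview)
Your argument is correct and follows the same approach as the paper: the paper's own proof is the single sentence ``Lemmas \ref{L:TA110} and \ref{L:TA111} above immediately imply the main result of this section,'' and you have simply spelled out the routine deduction from those lemmas (classification of nilpotent actions into four types, closedness and irreducibility of the $\TA_3(1,a,b)$, and maximality of an irreducible component). One very minor comment: your claim that the locus of foliations tangent to some $\mathbb C$-action \emph{equals} $\bigcup_{(a,b)}\TA_3(1,a,b)$ is in fact true (limits of conjugates of $v_{(a,b)}$ in $\mathbb P\mathfrak{sl}_4$ remain nonzero nilpotents), but only the containment ``$\subset$'' is needed for the proof, so you could streamline by stating just that direction.
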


If the expectation presented in  Remark \ref{R:generalizando} is confirmed, then the result above holds for every the irreducible
components of $\Fol{d}$, for every $d\ge 3$. As already pointed out, the general element of exceptional component of $\Fol{2}$ is tangent
to a an algebraic action of $(\mathbb C,+)$.

\section{Proofs of the main results}\label{S:proofs}

\subsection{Proof of Theorem \ref{T:structure}}\label{SS:proof of structure}
Since the argument is a simple extension of the ones carried out in the proof of \cite[Theorem A]{LorayPereiraTouzet17},
we will use the same notations of that paper.

Let $\F$ be a foliation of degree three on $\mathbb P^n$, $n\ge 3$. If $T_{\mathcal F}$ is the tangent sheaf of $\mathcal F$
then its determinant $\det T_{\mathcal F}$ is isomorphic to $\mathcal O_{\mathbb P^n}(n-4)= \mathcal O_{\mathbb P^n}(\dim \mathcal F - \deg \mathcal F )$.

Let $f: \mathbb P^1 \to \mathbb P^n$ be the parameterization of a line $\ell$ on $\mathbb P^n$. Assume $f$ is generically transverse to
$\mathcal F$. Consider the Birkhoff-Grothendieck decomposition of $f^* T_{\mathcal F}$ in a direct sum of line-bundles. Because
$f$ is generically transverse to $\F$ and the normal bundle of $\ell$ is $\mathcal O_{\ell}(1)^{\oplus n-1}$, every summand of the decomposition has degree at most one. Therefore, we have the following possibilities
for $f^* T_{\mathcal F}$:
\begin{enumerate}
    \item $f^* T_{\mathcal F} \simeq \mathcal O_{\mathbb P^1}(-2) \oplus \mathcal O_{\mathbb P^1}(1)^{\oplus n-2}$; or
    \item $f^* T_{\mathcal F} \simeq \mathcal O_{\mathbb P^1}(-1) \oplus \mathcal O_{\mathbb P^1} \oplus \mathcal O_{\mathbb P^n}(1)^{\oplus n-3}$; or
    \item $f^* T_{\mathcal F} \simeq \mathcal O_{\mathbb P^1}^{\oplus 3} \oplus \mathcal O_{\mathbb P^1}(1)^{\oplus n-4}$;
\end{enumerate}

If $\F$ does not admit a rational first integral and is not defined by closed rational $1$-form without codimension one zeros
then, as argued in the proof of \cite[Theorem A]{LorayPereiraTouzet17}, there exists an algebraically integrable foliation
$\mathcal A$ such that $T_{\mathcal A}$ is a subsheaf of $T_{\mathcal F}$ (i.e., $\mathcal A$ is contained in $\mathcal F$), and
$h^0(\mathbb P^1, f^* T_{\mathcal A}) = h^0(\mathbb P^1, f^* T_{\mathcal F})$. It follows that
\begin{enumerate}
    \item $f^* T_{\mathcal A} \simeq  \mathcal O_{\mathbb P^1}(1)^{\oplus n-2}$; or
    \item $f^* T_{\mathcal A} \simeq  \mathcal O_{\mathbb P^1} \oplus \mathcal O_{\mathbb P^1}(1)^{\oplus n-3}$.
\end{enumerate}
Note that the possibility $f^* T_{\mathcal F} \simeq \mathcal O_{\mathbb P^1}^{\oplus 3} \oplus \mathcal O_{\mathbb P^1}(1)^{\oplus n-4}$ is excluded because in that case $f^* T_{\mathcal A}$ would be isomorphic to $f^*T_{\mathcal F}$ and, consequently,
$\F$ would be algebraically integrable.

To conclude, observe that $\det T_{\mathcal A} \simeq \mathcal O_{\mathbb P^n}(\dim \mathcal A - \deg \mathcal A)$ implies $\dim \mathcal A = n-2$ and $\deg \mathcal A \in \{ 0 ,1\}$. If $\deg \mathcal A=0$ then $\F$ is a linear pull-back of a foliation on $\mathbb P^2$. Otherwise, $\mathcal A$ is an algebraically integrable codimension two foliation of degree one tangent to $\mathcal F$.  \qed

\subsection{Proof of Theorem \ref{THM:A}}
The theorem below is a refined version of Theorem \ref{THM:A} from the Introduction.

\begin{thm}\label{T:more precise}
    The space of codimension one foliations of degree three on $\mathbb P^n$, $n\ge 3$,  has exactly
    $18$ distinct irreducible components whose general elements correspond to foliations which
    do not admit non-constant rational first integrals. They are
    \begin{enumerate}
        \item one irreducible component parameterizing linear pull-backs of degree three foliations on $\mathbb P^2$: $\LPB(3)$;
        \item four irreducible components of logarithmic type:
        \[
            \Log(1,1,1,1,1), \Log(1,1,1,2), \Log (1,1,3), \Log(1,2,2) ;
        \]
        \item one irreducible component parameterizing pull-backs of foliations on $\mathbb P=\mathbb P(1,1,2)$
        with normal sheaf $\mathcal O_{\mathbb P}(5)$ under rational maps of the form
        \[
            (x_0: \ldots: x_n) \mapsto (L_1:L_2:Q)
        \]
        where $L_1, L_2 \in \C[x_0, \ldots, x_n]$ are linear forms, and $Q \in \C[x_0, \ldots, x_n]$ is a quadratic form.
        \item\label{I:13} twelve  irreducible components parameterizing foliations which are linear pull-backs of foliations
        on $\mathbb P^3$ tangent to an algebraic action of $(\mathbb C^*,\cdot)$ with fixed point set of codimension at least two of the form $\TM_3(a,b,c;n)$, $(a,b,c;n)\neq (1,1,2;5)$,  listed in Tables \ref{Tab:nonintegrablenonrigid} and \ref{Tab:nonintegrablerigid}.
    \end{enumerate}
\end{thm}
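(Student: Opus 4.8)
The plan is to play the trichotomy of Theorem~\ref{T:structure} against the catalogue of components assembled in Sections~\ref{S:LOG}--\ref{S:additive}, and then to descend from $\mathbb P^n$ to $\mathbb P^3$. Fix an irreducible component $\Sigma\subseteq\Fol[n]{3}$ whose general element $\F$ carries no rational first integral. By Theorem~\ref{T:structure}, $\F$ is either (i) defined by a closed rational $1$-form without codimension one zeros, or (ii) tangent to an algebraically integrable codimension two foliation $\mathcal A$ of degree one, or (iii) a linear pull-back of a degree three foliation on $\mathbb P^2$. Cases (i) and (iii) are quick: in case (i) the Remark after Proposition~\ref{P:belong to log} supplies a polynomial integrating factor, so by Proposition~\ref{P:belong to log} the foliation lies in a logarithmic component $\Log(\mathbb P^n)(d_1,\ldots,d_k)$ with $\sum d_i=5$; since that set is itself an irreducible component (Theorem~\ref{T:log classifica}) and contains $\Sigma$, the two coincide, and discarding the two two-term partitions $(1,4),(2,3)$ together with $\SLog(2,5)$ and $\SLog(3,4)$ — all of which parameterize foliations with a rational first integral — leaves precisely $\Log(1,1,1,1,1),\Log(1,1,1,2),\Log(1,1,3),\Log(1,2,2)$, whose generic members indeed have no rational first integral. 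In case (iii), $\Sigma=\LPB(3)$, which is an irreducible component of $\Fol[n]{3}$ by the standard theory of linear pull-back components (re-checkable for $n=3$ through Lemma~\ref{L:genred}).

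The substance is case (ii). The identity $\det T_{\mathcal A}\simeq\mathcal O_{\mathbb P^n}(\dim\mathcal A-\deg\mathcal A)$, together with the Birkhoff-Grothendieck possibilities for $f^*T_{\mathcal A}$ recorded in the proof of Theorem~\ref{T:structure}, forces $\deg\mathcal A\in\{0,1\}$; the value $0$ returns us to case (iii). So suppose $\deg\mathcal A=1$ and first take $n=3$. Then $\mathcal A$ is a degree one foliation by algebraic curves, hence defined by a global vector field $v$ with algebraic orbits, i.e.\ the orbit foliation of a multiplicative or additive one-parameter algebraic group action (or, when $v$ has codimension one zeros, $\F$ is again a linear pull-back). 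Invoking Lemma~\ref{L:fix neq aut} to send foliations with a polynomial integrating factor back to case (i), we find that $\F$ lies in some $\TM_3(a,b,c;m)$ or $\TA_3(1,a,b)$; as only finitely many weight data are admissible (Lemma~\ref{L:trivial}), $\Sigma$ is contained in, hence equal to, one such closed irreducible set. Proposition~\ref{P:additive} shows the additive models add nothing new — $\Sigma$ is either $\LPB(3)=\TA_3(1,0,0)$ or, the other additive sets being contained in multiplicative ones (Lemmas~\ref{L:TA110}--\ref{L:TA111}), one of the $\TM_3$'s — and Corollary~\ref{C:multiplicativenonintegrable} identifies the multiplicative components without rational first integral with the thirteen sets of Tables~\ref{Tab:nonintegrablenonrigid} and~\ref{Tab:nonintegrablerigid}. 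One of these, $\TM_3(1,1,2;5)$, coincides with the $\mathbb P(1,1,2)$-pull-back component of the statement (Lemma~\ref{L:TA110}); removing it leaves the twelve of item~\ref{I:13}.

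For $n>3$ the task is to descend to $\mathbb P^3$. A general leaf of $\mathcal A$ is an $(n-2)$-dimensional algebraic subvariety and $\mathcal A$ is cut out by the fibres of a dominant rational map $\rho:\mathbb P^n\dashrightarrow Y$ onto a normal surface. I would restrict $\rho$ to a general linear $\mathbb P^3\subseteq\mathbb P^n$, where $\mathcal A$ restricts to a degree one foliation by curves — hence to a $(\mathbb C^*,\cdot)$- or a $(\mathbb C,+)$-action, or to the fibres of a linear map $\mathbb P^3\dashrightarrow\mathbb P^2$ — and then argue, using reflexivity and that the indeterminacy locus has codimension at least two (in the spirit of the Remark following Proposition~\ref{P:description}), that $\mathcal A$ is the linear pull-back, under a linear projection $\mathbb P^n\dashrightarrow\mathbb P^3$, of that curve foliation, \emph{unless} $\mathcal A$ is the foliation by fibres of a map $(L_1:L_2:Q):\mathbb P^n\dashrightarrow\mathbb P(1,1,2)$ with $L_i$ linear and $Q$ quadratic. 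In the first alternative $\F$ contains the fibres of the projection $\pi$, so $\F=\pi^*\F'$ for a degree three $\F'$ on $\mathbb P^3$ tangent to that curve foliation, and the $n=3$ analysis places $\F'$ — hence $\Sigma$ — among the twelve linear pull-backs of item~\ref{I:13} or in $\LPB(3)$. In the second alternative $\F$ is a pull-back of a foliation on $\mathbb P(1,1,2)$ with normal sheaf $\mathcal O_{\mathbb P(1,1,2)}(5)$, so $\Sigma$ is the $\mathbb P(1,1,2)$-pull-back component. This classification of the auxiliary foliation $\mathcal A$, and the attendant descent, is the step I expect to be the main obstacle; everything else is assembly of results already in hand.

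It remains to verify that the eighteen sets so produced are pairwise distinct irreducible components of $\Fol[n]{3}$. Irreducibility is clear throughout. That each is a component follows for the four logarithmic ones from Theorem~\ref{T:log classifica}; for $\LPB(3)$ and the $\mathbb P(1,1,2)$-pull-back from the pull-back-component theory (or, over $\mathbb P^3$, a direct Zariski-tangent computation via Lemma~\ref{L:genred}); and for the remaining twelve from the corresponding statements over $\mathbb P^3$ combined with the fact that the linear pull-back of a component of $\Fol[3]{3}$ which is not itself a linear pull-back from $\mathbb P^2$ stays a component. Distinctness is finite bookkeeping: the dimensions and generic Zariski-tangent dimensions in Tables~\ref{Tab:LOG}, \ref{Tab:nonintegrablenonrigid} and~\ref{Tab:nonintegrablerigid}, separated further by coarse invariants — existence of a polynomial integrating factor, the weights of the ambient $\mathbb C^*$-action, the degree of a general algebraic leaf — tell all eighteen apart. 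Since each construction is uniform in $n\ge3$, the count is independent of $n$, which also yields Theorem~\ref{THM:A}.
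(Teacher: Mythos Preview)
Your overall architecture matches the paper's proof: apply Theorem~\ref{T:structure}, dispatch the closed-form and linear-pull-back cases quickly, and in the remaining case classify the auxiliary foliation $\mathcal A$. The one real gap is precisely where you flag it. For $n>3$ you propose to classify the algebraically integrable codimension-two degree-one foliation $\mathcal A$ by restricting to a general $\mathbb P^3$ and then arguing, via reflexivity and codimension of the indeterminacy locus, that $\mathcal A$ is either a linear pull-back from $\mathbb P^3$ or the fibre foliation of a map $(L_1:L_2:Q):\mathbb P^n\dashrightarrow\mathbb P(1,1,2)$. You do not actually carry this out, and the sketch (``in the spirit of the Remark following Proposition~\ref{P:description}'') is not a proof; in particular it is not clear how reflexivity alone forces $\mathcal A$ globally to be a linear pull-back rather than something that merely becomes one after restriction to a generic $\mathbb P^3$.

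The paper closes this gap by citation rather than by a new argument: \cite[Theorem 6.2]{LorayPereiraTouzet13} states exactly that such an $\mathcal A$ on $\mathbb P^n$ is either the fibre foliation of a dominant rational map $\mathbb P^n\dashrightarrow\mathbb P(1,1,2)$ given by two linear forms and one quadratic form, or a linear pull-back of a foliation on $\mathbb P^3$ defined by a global holomorphic vector field. With that in hand, the first alternative yields the $\mathbb P(1,1,2)$-pull-back component directly (the paper checks that the map is a submersion off a codimension-two set, so normal sheaves pull back as required), and the second alternative reduces to your $n=3$ analysis, which is essentially the paper's (it invokes Proposition~\ref{P:Key argument}, then Proposition~\ref{P:additive} and Corollary~\ref{C:multiplicativenonintegrable}, and separates off $\TM_3(1,1,2;5)$ just as you do). So the missing ingredient is a reference, not a new idea; once you plug in \cite[Theorem 6.2]{LorayPereiraTouzet13}, your argument and the paper's coincide. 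A minor point: in case~(i) you need not mention $\SLog(2,5)$ and $\SLog(3,4)$, since Proposition~\ref{P:belong to log} lands you only in the $\Log(\ast)$ components.
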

\begin{proof}
    Let $\F$ be a degree three foliation on $\mathbb P^n$, $n\ge 3$, without a rational first integral. Assume also
    that $\F$ is a sufficiently general element of an irreducible component of $\Fol[n]{3}$.
    According to Theorem \ref{T:structure},
    \begin{enumerate}
        \item\label{I:Final LPB} $\F$ is a linear pull-back of degree three foliation on $\mathbb P^2$; or
        \item\label{I:Final Log} $\F$ is defined by a closed rational $1$-form without codimension one zeros; or
        \item\label{I:Final TM} $\F$ is tangent to an algebraically integrable codimension two foliation $\mathcal A$ of degree one.
    \end{enumerate}

    The set of foliations in $\Fol[n]{3}$ which are linear pull-backs from $\mathbb P^2$ is an irreducible
    component of $\Fol[n]{3}$, see for instance \cite[Corollary 5.1]{CukiermanPereira}.

    If $\F$ is defined by a  closed rational $1$-forms without codimension one zeros, then
    it admits a polynomial integrating factor.
    Proposition \ref{P:belong to log} implies that $\F$ belongs to one of the logarithmic components.
    The logarithmic components are irreducible and generically reduced
    components of $\Fol[n]{3}$, see Theorem \ref{T:log classifica}. They are in bijection with the partitions of $5$ with at
    least two positive integral summands. Moreover, a general element of $\Log(d_1, \ldots, d_k)$ is an algebraically integrable
    foliation  if, and only if, $k \le 2$. It follows that $\F$ belongs to  $\Log(1,1,1,1,1), \Log(1,1,1,2), \Log (1,1,3)$, or
    $\Log(1,2,2)$.

    If $\F$ is not a linear pull-back from $\mathbb P^2$ and does not admit a polynomial integrating factor, then there exists a
    codimension two foliation $\mathcal A$ of degree one tangent to $\mathcal F$. According to \cite[Theorem 6.2]{LorayPereiraTouzet13},
    the foliation $\mathcal A$ is determined by the fibers of a dominant rational map $\mathbb P^n \dashrightarrow \mathbb P(1,1,2)$ with irreducible general fiber and defined by two linear forms and one quadratic form; or is a linear pull-back of a foliation on $\mathbb P^3$ defined by a global holomorphic vector field.

    Assume first that $\mathcal A$ is defined by the fibers of
    \begin{align*}
        \varphi: \mathbb P^n & \dashrightarrow \mathbb P(1,1,2) \\
        (x_0: \ldots: x_n) &\mapsto ( L_1 : L_2 : Q ) \, .
    \end{align*}
    Therefore $\mathcal A$ is defined by the $2$-form $\omega \in H^0(\mathbb P^n, \Omega^2_{\mathbb P^n}(4))$ given in homogeneous coordinates by
    \[
        \omega = i_R (dL_1 \wedge dL_2 \wedge dQ) \, .
    \]
    Notice that the vanishing locus of $\omega$ coincides with the closure of the locus where the morphism $\varphi_{|\mathbb P^n - \{ L_1 = L_2 = Q=0\}}$ has rank smaller than two. Because $\omega$ defines a foliation of degree one, the singular locus of $\omega$ has codimension at least two. Therefore, $\varphi$  is a submersion outside a set of codimension at least two.
    Consequently, for any foliation $\mathcal G$ on $\mathbb P(1,1,2)$, the normal bundle of $\varphi^* \mathcal G$ is equal to $\varphi^* N_{\mathcal G}$. Since $\F$ is tangent to $\mathcal A$ and $N_{\mathcal F} = \mathcal O_{\mathbb P^n}(5)$, there exists a foliation $\G$ on    $\mathbb P(1,1,2)$ with $N_{\mathcal G} = \mathcal O_{\mathbb P}(5)$ such that $\F = \varphi^* \mathcal \G$.

    Assume now that $\mathcal A$ is a linear pull-back of a degree one foliation by curves on $\mathbb P^3$. In this case, the foliation $\F$ also
    is a linear pull-back of a foliation $\G$ on $\mathbb P^3$. Proposition \ref{P:Key argument} implies that $\G$ is tangent to the orbits of a one-dimensional algebraic group $G$. Since we are assuming that $\F$ is general in the irreducible component of $\Fol[n]{3}$ containing it, Proposition $\ref{P:additive}$ implies that $G$ is the multiplicative group $(\mathbb C^*,\cdot)$.  Corollary \ref{C:multiplicativenonintegrable} implies that $\G$ belongs to one of the thirteen sets listed in Tables \ref{Tab:nonintegrablenonrigid} and \ref{Tab:nonintegrablerigid}.
    If $\G$ belongs to $\TM_3(1,1,2;5)$ then $\F$ fits into the description of the previous paragraph. If instead $\G$ does not belong to
    $\TM_3(1,1,2;5)$, any sufficiently small
     deformation of $\F$ is also a linear pull-back from $\mathbb P^3$.
\end{proof}

\subsection{Theorem \ref{THM:B}}
Arguments similar to the ones used to proof Theorem \ref{T:more precise} give the following more precise version of Theorem \ref{THM:B}.

\begin{thm}\label{T:AlgInt}
    The space of codimension one foliations of degree three on $\mathbb P^n$, $n\ge 3$, has at least
    $6$ distinct irreducible components whose general elements correspond to algebraically integrable foliations.
    In dimension $n=3$, they are
    \begin{enumerate}
        \item two irreducible components of logarithmic type: $\Log(1,4), \Log(2,3)$;
        \item two irreducible components of special logaritmic type: $\SLog(2,5), \SLog(3,4)$;
        \item one irreducible component parameterizing foliations tangent to a multiplicative action of $\mathbb C^*$ with
        fixed point set of codimension at least two: $\TM_3(1,3,7;10)$;
        \item at least one irreducible component containing the set $\TM_3(1,3,5;8)$.
    \end{enumerate}
    In dimension $n \ge 4$, for each one of the $6$ irreducible components above, there exists
    at least one irreducible component of $\Fol[n]{3}$ such that the restriction of a general
    element to a general $\mathbb P^3 \subset \mathbb P^n$ is a general element of the respective
    irreducible component of $\Fol[3]{3}$.
\end{thm}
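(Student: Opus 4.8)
The plan is to read off the six components at $n=3$ from the previous sections and then to transport them to $\mathbb P^n$ by restricting a general element to a general linear $\mathbb P^3\subset\mathbb P^n$. I use throughout that, for a codimension one foliation, being algebraically integrable is synonymous with admitting a non-constant rational first integral.

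\emph{The case $n=3$.} By Theorem \ref{T:log classifica}, $\Log(1,4)$ and $\Log(2,3)$ are generically reduced irreducible components, and since their partitions have two parts their general elements carry the rational first integrals $L^4/f_4$ and $g_2^3/g_3^2$. By Propositions \ref{P:Log34} and \ref{P:special log(2,5)}, $\SLog(3,4)$ and $\SLog(2,5)$ are irreducible components whose general elements are defined by closed rational $1$-forms, namely the pull-backs of $d\log(P^3/Q^4)$ from Equation (\ref{E:CeDe}) and of $d\log(f^2/g^5)$ built from Lemma \ref{L:conta25}, so both have algebraically integrable general elements. By Proposition \ref{P:rationalfirstintegral}(1), $\TM_3(1,3,7;10)$ is an irreducible, generically reduced component, and as $1+3+7>10$ Item (\ref{I:Miyaoka}) of Proposition \ref{P:description} furnishes a rational first integral for its general element. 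For the sixth component, let $W$ be any irreducible component of $\Fol{3}$ containing the set $\TM_3(1,3,5;8)$; if the general element of $W$ had no rational first integral, then $W$ would figure in the list of Theorem \ref{T:more precise}, whereas Proposition \ref{P:rationalfirstintegral}(4) places $\TM_3(1,3,5;8)$ outside every component of Tables \ref{Tab:LOG}, \ref{Tab:split}, \ref{Tab:nonintegrablenonrigid}, \ref{Tab:nonintegrablerigid}, and the $\fix$-comparison carried out there also excludes $\LPB(3)$ (the general element of $\LPB(3)$ has an infinitesimal automorphism with divisorial zero set, unlike that of $\TM_3(1,3,5;8)$). Hence the general element of $W$ is algebraically integrable. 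These six are pairwise distinct: each of $\Log(1,4),\Log(2,3),\SLog(2,5),\SLog(3,4)$ has \emph{every} element admitting a polynomial integrating factor (Lemma \ref{L:pifclosed}, since the general element does), whereas the general elements of $\TM_3(1,3,5;8)$, hence of $W$, and of $\TM_3(1,3,7;10)$ do not; $W\neq\TM_3(1,3,7;10)$ because the latter is a component not containing $\TM_3(1,3,5;8)$ (Proposition \ref{P:rationalfirstintegral}(4)); and $\Log(1,4),\Log(2,3),\SLog(2,5),\SLog(3,4)$ are separated by their dimensions $36,28,19,16$.

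\emph{The case $n\ge 4$.} For $\Log(\mathbb P^n)(1,4)$ and $\Log(\mathbb P^n)(2,3)$ the statement is immediate from Theorem \ref{T:log classifica}: these are irreducible components, their general elements are visibly algebraically integrable, and the restriction of a general element to a general $\mathbb P^3$ is again logarithmic with the same partition and with general linear and quartic (respectively quadratic and cubic) factors, hence general in the corresponding $\mathbb P^3$-component. For the remaining four I would use the rational restriction map $\rho_n\colon\Fol[n]{3}\dashrightarrow\Fol{3}$, $[\omega]\mapsto[\omega|_{\mathbb P^3}]$ with $\mathbb P^3=\{x_4=\cdots=x_n=0\}$; one checks that $\omega|_{\mathbb P^3}$ is annihilated by the radial field of $\mathbb C^4$ and stays integrable, so $\rho_n$ is a morphism on the dense open locus of $\Fol[n]{3}$ where $\omega|_{\mathbb P^3}$ has codimension two zeros. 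I would feed into $\rho_n$: linear pull-backs under projections $\mathbb P^n\dashrightarrow\mathbb P^3$ for $\TM_3(1,3,7;10)$; pull-backs of the foliation of Equation (\ref{E:CeDe}) under projections $\mathbb P^n\dashrightarrow\mathbb P^4$ for $\SLog(3,4)$; and the $1$-forms $(5f\,dg-2g\,df)/x_3^2$ on $\mathbb P^n$, with $g=x_0^2-2x_1x_3$ and $f$ ranging over the $\mathbb P^n$-analogue of the family of Lemma \ref{L:conta25}, for $\SLog(2,5)$. Each is a family of algebraically integrable foliations on $\mathbb P^n$ whose image under $\rho_n$ is $\Aut(\mathbb P^3)$-saturated and dense in the corresponding $\mathbb P^3$-component; fixing a component $\Sigma_n\subset\Fol[n]{3}$ containing the closure of such a family, the irreducible set $\overline{\rho_n(\Sigma_n)}$ contains that dense image, hence by maximality of the $\mathbb P^3$-component equals it, so a general element of $\Sigma_n$ restricts to a general element of the $\mathbb P^3$-component. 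Algebraic integrability of the general element of $\Sigma_n$ then follows from that of its restriction, using the standard fact that a codimension one foliation on $\mathbb P^n$, $n\ge3$, is algebraically integrable once its restriction to a general linear $\mathbb P^3$ is (a foliation without rational first integral has only finitely many invariant hypersurfaces, and a general linear section inherits this). The sixth component is obtained by the analogous construction applied to linear pull-backs of $\TM_3(1,3,5;8)$, which — after verifying, in the spirit of Proposition \ref{P:Log34}, that the closure of this family is itself a component of $\Fol[n]{3}$ — yields a component with algebraically integrable general element restricting to a general element of a component of $\Fol{3}$ containing $\TM_3(1,3,5;8)$. The six components of $\Fol[n]{3}$ so obtained are pairwise distinct by the arguments used for $n=3$.

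\emph{The main obstacle.} The inventory at $n=3$ is routine assembly on top of the previous sections; the real work lies in the case $n\ge4$, where one must make sure that the \emph{general} member of the ambient $\mathbb P^n$-component — not merely the explicit family used to locate it — still restricts to a \emph{general} element on $\mathbb P^3$ (this is what the density/maximality argument through $\overline{\rho_n(\Sigma_n)}$ provides) and invoke the descent of algebraic integrability along restriction to a general linear subspace. The sixth component is the most delicate, since there ``the respective $\mathbb P^3$-component'' is only determined up to being some component containing $\TM_3(1,3,5;8)$, and one should establish the corresponding deformation-stability statement for its family of linear pull-backs.
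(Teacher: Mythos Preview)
The paper itself gives no proof beyond the sentence ``Arguments similar to the ones used to prove Theorem \ref{T:more precise}'', so your outline is doing real work and is largely on the right track for $n=3$. There is, however, a genuine error in your distinctness argument: the general elements of $\SLog(2,5)$ and $\SLog(3,4)$ do \emph{not} admit a polynomial integrating factor. If they did, Proposition \ref{P:belong to log} would force these components to lie inside one of the $\Log(d_1,\ldots,d_k)$, contradicting Propositions \ref{P:Log34} and \ref{P:special log(2,5)}, which establish them as separate irreducible components. The six components are nevertheless pairwise distinct for simpler reasons: the first five have different dimensions ($36,28,19,16,14$), and Proposition \ref{P:rationalfirstintegral}(4) already shows $\TM_3(1,3,5;8)$ lies in none of them (all five appear in Tables \ref{Tab:LOG} or \ref{Tab:split}), so any component $W\supset\TM_3(1,3,5;8)$ is a sixth one.

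For $n\ge 4$ there is a second gap concerning the sixth component. You propose to verify, ``in the spirit of Proposition \ref{P:Log34}'', that linear pull-backs of $\TM_3(1,3,5;8)$ form a component of $\Fol[n]{3}$. But already on $\mathbb P^3$ the set $\TM_3(1,3,5;8)$ has dimension $14$ while the Zariski tangent space of $\Fol{3}$ there has dimension $19$, so a tangent-space computation cannot certify this, and we do not know whether $\TM_3(1,3,5;8)=W$. The clean fix is to pull back \emph{all} of the (unknown) component $W$ and take any component $\Sigma\subset\Fol[n]{3}$ containing these pull-backs: since restriction of a linear pull-back to a general $\mathbb P^3$ is an $\Aut(\mathbb P^3)$-translate of the original foliation, $\overline{\rho_n(\Sigma)}\supset W$, hence $\overline{\rho_n(\Sigma)}=W$ by maximality. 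Finally, $\Sigma$ cannot be one of the $18$ components of Theorem \ref{T:more precise} because each of those restricts to the matching non-integrable $\mathbb P^3$-component; this forces the general element of $\Sigma$ to be algebraically integrable and replaces your appeal to the ``standard fact'' about descent of integrability.
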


\subsection{Some open problems}

To conclude we present some problems that came to us in the course of our investigations.

\subsubsection{Irreducible components}
From our results, we already know that $\Fol[n]{3}$ has at least $24$ distinct irreducible components.
Is this the right number of components, or are there irreducible components not considered in this paper?

\begin{problem}\label{Prob:irreducible}
     How many irreducible components does $\Fol[n]{3}$ have?
\end{problem}

Of course,  the generic elements of the missing irreducible components correspond to foliations
with rational first integrals.

\begin{problem}\label{Prob:irreducible bis}
    Does the number of irreducible components of $\Fol[n]{3}$ vary with $n$?
\end{problem}

Notice that we have not excluded the possibility in the statement Theorem \ref{THM:B},
of the existence of two or more irreducible components of $\Fol[n]{3}$, $n \ge 4$, whose
restriction of a general element to a general $\mathbb P^3 \subset \mathbb P^n$ lies
in the same component of $\Fol[3]{3}$. Of course, Theorem \ref{T:log classifica} guarantees that
this does not happen with $\Log(1,4)$ and $\Log(2,3)$.

\subsubsection{Algebraically integrable foliations}
We have the impression that we still lack ideas and tools to deal with irreducible components of $\Fol[n]{3}$ (or $\Fol[n]{d}$) for which
the general member is algebraically integrable. One problem on the subject that seems interesting is the following.

\begin{problem}\label{Prob:Poincare}
    Let $\Sigma \subset \Fol[n]{d}$ be an irreducible component for which the general element corresponds to
    a foliation $\F$ with a rational first integral.
    \begin{enumerate}
        \item\label{I:Poincare1} Give a bound for the degree of the general leaf of $\F$ in function of $d$ and $n$.
        \item\label{I:Poincare2} Let $f: \mathbb P^n \dashrightarrow \mathbb P^1$ be a rational first integral for $\F$ with irreducible general
        fiber. Give a bound for the maximal number of non-reduced fibers of $f$  in function of $d$ and $n$.
    \end{enumerate}
\end{problem}

Item (\ref{I:Poincare1}) of Problem \ref{Prob:Poincare} is a variant of the so-called Poincaré Problem for foliations, see the Introduction of \cite{MR3957403} for a brief history of this problem. Contrary to the classical problem, which does not admit an answer only in function of the degree and the analytical type of the singularities of the foliation, see \cite{MR1914932}; the existence of bounds as requested by Problem \ref{Prob:Poincare} follows from the fact that  $\Fol[n]{d}$ has finitely many irreducible components.

From the list of irreducible components presented in this paper, we see that for foliations of degree three the answer to Item (\ref{I:Poincare1}) is at least $21$ (realized by $\TM_3(1,3,7;10))$) and the answer to Item (\ref{I:Poincare2}) is at least $3$, realized by any of the components present in Theorem  \ref{T:AlgInt} with the exception of $\Log(1,4)$ and $\Log(2,3)$. A sharp answer to Problem \ref{Prob:Poincare} for foliations of degree $3$ would be an important step toward an answer to Problem \ref{Prob:irreducible}.

\subsubsection{A conjectural necessary condition for the smoothness of $\Fol[n]{d}$}

We finish things off by  putting forward a conjecture. Although it is not strictly
related to the classification of irreducible components of $\Fol{3}$, it was
suggested by the computations we carried out while preparing this paper.

\begin{conj}\label{Conj:smooth}
    Let $\F$ be a degree $d$ codimension one foliation on $\mathbb P^n$.
    If  the set of non-Kupka singularities has codimension at least three, then
    $\F$ corresponds to a smooth point of $\Fol{d}$, i.e., there exists
    a unique irreducible component of $\Fol[n]{d}$ containing $\F$ and
    its dimension coincides with the dimension of the Zariski tangent
    space of $\Fol[n]{d}$ at $\mathcal F$.
\end{conj}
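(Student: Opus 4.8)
The plan is to control the local structure of $\Fol[n]{d}$ at $\F$ through deformation theory. Fix $\omega\in H^0(\mathbb P^n,\Omega^1_{\mathbb P^n}(d+2))$ defining $\F$, so $\NF=\mathcal O_{\mathbb P^n}(d+2)$. As recalled in Section~\ref{S:space}, $\Fol[n]{d}$ is cut out inside $\Open[n]{d}$ by the quadratic equation $\omega\wedge d\omega=0$; its Zariski tangent space at $\F$ is $\ker\delta/\mathbb C\omega$, where the linearized integrability operator
\[
    \delta(\eta)=\omega\wedge d\eta+\eta\wedge d\omega
\]
maps $H^0(\mathbb P^n,\Omega^1_{\mathbb P^n}(d+2))$ to $H^0(\mathbb P^n,\Omega^3_{\mathbb P^n}(2d+4))$. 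The obstructions to deforming $\F$ then lie in $\mathrm{coker}\,\delta$, the leading one being the quadratic map $\mathrm{ob}_2\colon\ker\delta/\mathbb C\omega\to\mathrm{coker}\,\delta$, $\eta\mapsto[\eta\wedge d\eta]$. So the conjecture breaks into: \emph{(i)} $\mathrm{ob}_2\equiv0$, and \emph{(ii)} the vanishing of $\mathrm{ob}_2$ forces all higher obstructions to vanish, so that $\F$ is a smooth point of $\Fol[n]{d}$. When $T_{\F}$ is split and $\sing\F$ has only finitely many non-Kupka points this is essentially \cite[Corollary~1]{CukiermanPereira} together with Proposition~\ref{P:codim 3}; the content of the conjecture is to drop the hypothesis on $T_{\F}$.

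For (i) the idea is to \emph{localize along $\sing\F$}. From $\delta(\eta)=0$ one gets $\omega\wedge\eta\wedge d\eta=0$, and since $\sing\F$ has codimension $\ge2$ and $\Omega^{2}_{\mathbb P^n}$ is locally free, de Rham division produces a \emph{global} twisted $2$-form $\theta_\eta\in H^0(\mathbb P^n,\Omega^2_{\mathbb P^n}(d+2))$ with $\eta\wedge d\eta=\omega\wedge\theta_\eta$; using $\omega\wedge d\omega=0$ once more, $\mathrm{im}\,\delta=\omega\wedge S$ for a subspace $S\subset H^0(\mathbb P^n,\Omega^2_{\mathbb P^n}(d+2))$, and (as $\omega\wedge(\cdot)$ is injective on twisted $2$-forms, $H^0(\Omega^1_{\mathbb P^n})$ being zero) one has $\mathrm{ob}_2(\eta)=0$ if and only if $\theta_\eta\in S$. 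Away from $\sing\F$ the foliation $\F$ is a submersion, so this membership holds locally by the leafwise Poincar\'e lemma; along the \emph{Kupka} part of $\sing\F$ — which by hypothesis is a dense open subset of every codimension-two component of $\sing\F$ — one invokes the classical Kupka--de Medeiros normal form, writing $\F$ there as the suspension of a germ of integrable $1$-form on $(\mathbb C^2,0)$ with invertible linear part, whose deformation functor is smooth. Hence the obstruction to writing $\theta_\eta$ globally as an element of $S$ is a cohomology class supported on the non-Kupka locus $Z=\sing\F\setminus\mathrm{Kup}(\F)$.

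It is at this point that the hypothesis $\codim Z\ge3$ must do the work. When $n=3$, $Z$ is a finite set of points — the ``generalized Kupka'' situation of Section~\ref{S:mult} — so the obstruction class lands in the finite-dimensional space $\bigoplus_{p\in Z}(\text{local term at }p)$, and what is left is the purely local statement that, at a non-Kupka point, this term vanishes after twisting by the ample bundle $\NF^{\otimes2}=\mathcal O_{\mathbb P^n}(2d+4)$. For $n\ge4$ the set $Z$ has dimension $n-3\ge1$ and the obstruction sheaf along $Z$ can carry higher cohomology; there one would combine the positivity of the twist with a uniform Castelnuovo--Mumford regularity bound on the local structure along $Z$ to invoke Serre vanishing. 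Alternatively, one could attempt a Lefschetz reduction to $n=3$: restricting $\F$ to a general $\mathbb P^3\subset\mathbb P^n$ preserves the degree and, the Kupka condition being inherited by transverse slices, keeps the non-Kupka set of codimension $\ge3$; comparing tangent and obstruction spaces through the hyperplane-section exact sequences, with the error cohomology killed by the standard vanishing theorems on projective space, should propagate the conclusion from $n=3$.

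The principal obstacle is that there is no local normal form for a codimension-one foliation at a non-Kupka singular point, so describing the obstruction sheaf along $Z$ precisely enough to prove it is \emph{small} — Cohen--Macaulay of the expected dimension, or with a low-degree resolution — is exactly the hard analytic step; and the codimension hypothesis genuinely cannot be relaxed, since the components of Table~\ref{Tab:nonintegrablenonrigid} carrying an entire curve of non-Kupka singularities are already generically non-reduced. A second, essentially independent difficulty is step (ii): as the equations are quadratic, the vanishing of $\mathrm{ob}_2$ forces every higher obstruction merely into $\mathfrak m^3$, and upgrading this to exact vanishing needs an extra ingredient — formality of the differential graded Lie algebra controlling the deformations of $\F$, or, equivalently, a direct construction of an honestly smooth family of foliations through $\F$ of dimension $\dim T_{\Fol[n]{d}}(\F)$ — which one would again hope to extract from the structure of $\sing\F$. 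This is why the infinitesimal computations in this paper, which via Lemma~\ref{L:genred} settle the conjecture only for the general element of each listed component, do not by themselves yield the full statement.
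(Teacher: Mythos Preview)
The statement you are attempting to prove is \emph{Conjecture}~\ref{Conj:smooth}: the paper explicitly presents it as an open problem (``We finish things off by putting forward a conjecture''), offers no proof, and only cites the generically-reduced results of \cite{CukiermanPereiraVainsencher} and \cite{CukiermanAceaMassri} as supporting evidence. So there is no proof in the paper to compare your proposal against.

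Your proposal is not a proof but a proof strategy, and to your credit you say so: you isolate two gaps, the lack of a normal form at non-Kupka points (needed to control the obstruction sheaf along $Z$) and the passage from vanishing of $\mathrm{ob}_2$ to smoothness (a formality-type statement). Both are genuine. The first is the heart of the matter: your argument localizes the obstruction to the non-Kupka locus $Z$, but then the claim that positivity of $\NF^{\otimes 2}$ plus a ``uniform Castelnuovo--Mumford regularity bound'' kills the relevant cohomology is precisely what is not known --- without structural control on the local deformation functor at a non-Kupka point, there is no reason the obstruction sheaf should have any particular regularity, and Serre vanishing alone cannot close this. The second gap is also real: the integrability equations are quadratic, so the obstruction map is a priori only the quadratic part of the Kuranishi map, and $\mathrm{ob}_2\equiv 0$ does not by itself give smoothness of the formal neighbourhood. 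Your proposal is therefore an honest outline of where the difficulties lie, consistent with the statement remaining a conjecture.
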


The proofs that logarithmic components are generically reduced in \cite{CukiermanPereiraVainsencher} (divisor of
poles with two components) and \cite{CukiermanAceaMassri} (general case) provide further evidence for Conjecture \ref{Conj:smooth}.

\end{document}